\numberwithin{equation}{section}
\newcommand{\CalB}{\mathcal{B}}
\def\eps{\varepsilon }
\def\eps{\varepsilon}
\newcommand\br{\begin{remark}}
\newcommand\er{\end{remark}}
\newcommand\bp{\begin{pmatrix}}
\newcommand\ep{\end{pmatrix}}
\newcommand\be{\begin{equation}}
\newcommand\ee{\end{equation}}
\newcommand\ba{\begin{equation}\begin{aligned}}
\newcommand\ea{\end{aligned}\end{equation}}
\newcommand{\bap}{\begin{app}}
\newcommand{\eap}{\end{app}}
\newcommand{\begs}{\begin{exams}}
\newcommand{\eegs}{\end{exams}}
\newcommand{\beg}{\begin{example}}
\newcommand{\eeg}{\end{exaplem}}
\newcommand{\bpr}{\begin{proposition}}
\newcommand{\epr}{\end{proposition}}
\newcommand{\bt}{\begin{theorem}}
\newcommand{\et}{\end{theorem}}
\newcommand{\bc}{\begin{corollary}}
\newcommand{\ec}{\end{corollary}}
\newcommand{\bl}{\begin{lemma}}
\newcommand{\el}{\end{lemma}}
\newcommand{\bd}{\begin{definition}}
\newcommand{\ed}{\end{definition}}
\newcommand{\brs}{\begin{remarks}}
\newcommand{\ers}{\end{remarks}}
\newtheorem{theo}{Theorem}[section]
\newtheorem{exams}[theo]{Examples}
\numberwithin{equation}{section}
\newcommand{\CalL}{\mathcal{L}}
\newcommand{\Id}{{\rm Id }}
\newcommand{\sgn}{\text{\rm sgn}}
\newtheorem{theorem}{Theorem}[section]
\newtheorem{proposition}[theorem]{Proposition}
\newtheorem{corollary}[theorem]{Corollary}
\newtheorem{lemma}[theorem]{Lemma}
\newtheorem{definition}[theorem]{Definition}
\newtheorem{example}[theorem]{Example}
\newtheorem{remark}[theorem]{Remark}
\title{Existence and stability of viscoelastic shock profiles}
\author{\sc \small
Blake Barker
\thanks{Indiana University, Bloomington, IN 47405;
bhbarker@indiana.edu:
Research of B.B. was partially supported
under NSF grant no. DMS-0801745. }
Marta Lewicka
\thanks{Department of Mathematics, University of Minnesota
206 Church S.E., Minneapolis, MN 55455;
lewicka@math.umn.edu:
Research of M.L. was partially supported
under NSF grants no. DMS-0707275 and DMS-0846996. }
Kevin Zumbrun
\thanks{Indiana University, Bloomington, IN 47405;
kzumbrun@indiana.edu:
Research of K.Z. was partially supported
under NSF grants no. DMS-0300487 and DMS-0801745. }}
\begin{document}

\maketitle


\begin{abstract}
We investigate existence and stability of
viscoelastic shock profiles for a class of planar models
including the incompressible shear case
studied by Antman and Malek-Madani.
We establish that the resulting equations fall
into the class of symmetrizable hyperbolic--parabolic systems, 
hence spectral stability implies linearized
and nonlinear stability with sharp rates of decay.
The new contributions are treatment
of the compressible case,
formulation of a rigorous nonlinear stability theory,
including verification of stability of small-amplitude Lax shocks,
and the systematic incorporation in our investigations of
numerical Evans function computations determining stability of
large-amplitude and or nonclassical type shock profiles.
\end{abstract}



\section{Introduction}\label{s:intro}

In this paper, generalizing work of Antman and Malek--Madani \cite{AM}
in the incompressible shear flow case, we carry out the numerical and
analytical study of the existence and stability of
planar viscoelastic traveling waves in a $3$d solid,
for a simple prototypical elastic energy density,
both for the general compressible and the incompressible shear flow
case. We establish that the resulting equations fall
into the class of symmetrizable hyperbolic--parabolic systems studied
in \cite{MaZ2,MaZ3,MaZ4,RZ,Z4}, hence spectral stability implies linearized
and nonlinear stability with sharp rates of decay.
This important point was previously left undecided, due to a lack
of the necessary abstract stability framework.

The new contributions beyond what was done in \cite{AM} are: treatment
of the compressible case, consideration of large-amplitude waves
(somewhat artificial given our simple choice of
energy density; however, the methods used clearly generalize to more physically
correct models), 
formulation of a rigorous nonlinear stability theory including verification
of stability of small-amplitude Lax waves,
and the systematic incorporation in our investigations of
numerical Evans function computations determining stability of
large-amplitude and or nonclassical type shock profiles.
For related analysis in various different settings,
see \cite{BHRZ,HLZ,HLyZ,CHNZ,BHZ,BLZ}.

In preparation for future generalizations, we also
discuss the case of phase-transitional viscoelasticity.
It would be interesting to carry out similar analysis for
more general classes of elastic energy density, as well as for the 
phase-transitional case which involves, at the
technical level higher order dispersive terms relating to surface
energy, and at the physical
level, presumably, interesting new behaviors.


\bigskip
{\bf Acknowledgment.} Thanks to Stuart Antman and Constantine
Dafermos for several helpful conversations, and to Stuart
Antman for making available the working notes \cite{A}.

\section{The equations of viscoelasticity}\label{s:theory}

The equations of isothermal viscoelasticity are given through the following
balance of linear momentum:
\begin{equation}\label{visco_eq}
\xi_{tt} - \nabla_X\cdot \Big(DW(\nabla\xi) + \mathcal{Z}(\nabla\xi,
\nabla\xi_t)\Big)=0.
\end{equation}
Here, $\xi:\Omega\times\mathbb{R_+}\longrightarrow \mathbb{R}^3$
denotes the deformation of a reference configuration
$\Omega\subset\mathbb{R}^3$ which models
a viscoelastic body with constant temperature and density.
A typical point in $\Omega$ is denoted by $X$, so that
the deformation gradient is given as:
$$F = \nabla\xi\in\mathbb{R}^{3\times 3},$$
with the key constraint of:
$$\det F>0.$$
In (\ref{visco_eq}) the operator
$\nabla_X\cdot$ stands for the divergence of an appropriate field.
We use the convention that the divergence of a matrix field is taken
row-wise. In what follows, we shall also use the matrix norm
$|F|=(\mbox{tr}(F^TF))^{1/2}$, which
is induced by the inner product: $F_1:F_2=\mbox{tr}(F_1^TF_2)$.

The mapping $DW:\mathbb{R}^{3\times 3}\longrightarrow \mathbb{R}^{3\times 3}$
is the Piola-Kirchhoff stress tensor which, in agreement with
the second law of thermodynamics,
is expressed as the derivative of an elastic energy density
$W:\mathbb{R}^{3\times 3}\longrightarrow \overline{\mathbb{R}}_+$.
The viscous stress tensor is given by the mapping
$\mathcal{Z}:\mathbb{R}^{3\times 3}\times \mathbb{R}^{3\times 3}
\longrightarrow \mathbb{R}^{3\times 3}$, depending on the deformation gradient $F$
and the velocity gradient $Q=F_t = \nabla\xi_t=\nabla v$, where $v=\xi_t$.

The first order version of the inviscid part of  (\ref{visco_eq}):
\be\label{fo}
 \xi_{tt} - \nabla_X\cdot \Big(DW(\nabla\xi) \Big)=0
\ee
is:
\begin{equation}\label{firstorder}
(F,v)_t + \sum_{i=1}^3 \partial_{X_i}\big(\tilde G_i(F,v)\big) = 0.
\end{equation}
Above, $(F,v) :\Omega\longrightarrow \mathbb{R}^{12}$ represents
 conserved quantities, while
$\tilde G_i:\mathbb{R}^{12}\longrightarrow \mathbb{R}^{12}$ given by:
$$-\tilde G_i(F,v) = v^1e_i \oplus   v^2e_i \oplus  v^3e_i \oplus
\left[\frac{\partial}{\partial F_{ki}}W(F)\right]_{k=1}^3, \quad i=1..3$$
are the fluxes, and
$e_i$ denotes the $i$-th coordinate vector of $\mathbb{R}^3$.

\subsection{The elastic energy density $W$}

The principle of material frame invariance imposes the following condition on
$W$, with respect to the group $SO(3)$ of proper rotations in $\mathbb{R}^3$:
\begin{equation}\label{frame_inv}
W(RF) = W(F) \qquad \forall F\in\mathbb{R}^{3\times 3} \quad\forall R\in SO(3).
\end{equation}
Also, the material consistency requires that:
\begin{equation}\label{mat_inc}
W(F)\to +\infty \quad \mbox{ as } \det F\to 0.
\end{equation}
In what follows, we shall restrict our attention to the class of isotropic
materials,
for whom the energy $W$ satisfies additionally:
\begin{equation}\label{isotropic}
W(FR) = W(F) \qquad \forall F\in\mathbb{R}^{3\times 3} \quad\forall R\in SO(3).
\end{equation}
Recall \cite{Ball} that hyperbolicity of (\ref{fo})
is equivalent to rank-one convexity of $W$.

\medskip

A particular example of $W$ satisfying (\ref{frame_inv}) and (\ref{isotropic})
is:
\begin{equation}\label{part}
W_0(F)=\frac{1}{4}|F^TF-\mbox{Id}|^2 = \frac{1}{4}(|F^T F|^2 - 2|F|^2+3)
\end{equation}
and by a direct calculation, we obtain:
$$DW_0(F) = F(F^TF-\mathrm{Id}).$$
Note that $W_0$ in (\ref{part}) is not quasiconvex (or polyconvex) as
it is not globally rank-one convex. This follows by checking the
Legendre-Hadamard condition. Indeed, for any $A\in\mathbb{R}^{3\times 3}$
one has: $\partial_{AA}^2 W_0(F) = (FA^TF + FF^TA + AF^TF - A):A$.
Taking $A=\mbox{Id}$ and $F\in\mbox{skew}$ we obtain
$\partial_{AA}^2W_0(F) = |F|^2-3$ which is negative for $|F|<\sqrt{3}$.

On the other hand we see that $\partial_{AA}^2 W_0(R)
= 2|\mbox{sym}(AR^T)|^2$ for $R\in SO(3)$.  If $\mbox{rank } A=1$
then $\mbox{rank} (AR^T)=1$ so $\mbox{sym}(AR^T)\neq 0$.
Therefore $\partial_{AA}^2 W_0(R)\geq c|A|^2$ for every $R\in SO(3)$
and every rank-one matrix $A$, with a uniform $c>0$.
This implies that $W_0$ is rank-one convex in a
neighborhood of $SO(3)$.

Notice also that $W_0$ has quadratic growth close to $SO(3)$.
Indeed, write $F=R+ E$, where for $F$ close to $SO(3)$ we have:
$R=\mathbb{P}_{SO(3)}F$ and $|E| = \mbox{dist}(F, SO(3))$.
Since $E$ is orthogonal to  the tangent space to $SO(3)$ at $R$, we
see that $R^TE$ must be symmetric.
Therefore: $ W_0(F) =\frac{1}{4}|R^TE + E^TR + E^TE|^2 = \frac{1}{4}|2R^TE + E^TE|^2
= |E|^2 +\mathcal{O}(|E|^3).$

For other examples of $W$ satisfying (\ref{frame_inv}) and (\ref{isotropic}), see
\eqref{genform} and Appendix \ref{s:general}.

\subsection{The viscous stress tensor  $\mathcal{Z}$}

The viscous stress tensor $\mathcal{Z}:\mathbb{R}^{3\times 3}\times
\mathbb{R}^{3\times 3}
\longrightarrow \mathbb{R}^{3\times 3}$ should be compatible with the
following principles of continuum mechanics:
balance of angular momentum, frame invariance,
and the Claussius-Duhem inequality. That is, for every $F,Q\in
\mathbb{R}^{3\times 3}$
with $\det F\neq 0$, we require that:
\begin{equation}\label{visc_tensor}
\begin{minipage}{14cm}
\begin{itemize}
\item[(i)] $\mbox{skew}\left(F^{-1}\mathcal{Z}(F,Q)\right) = 0$,
i.e. $\mathcal{Z}=FS$ with $S$ symmetric.
\item[(ii)] $\mathcal{Z}(RF,R_tF + RQ) = R\mathcal{Z}(F,Q)$
for every path of rotations $R:\mathbb{R}_+\longrightarrow SO(3)$,
i.e. in view of (i): $S(RF,RKF+RQ)=S(R,Q)$ $\forall R\in SO(3)$
$\forall K\in \mbox{skew}$.
\item[(iii)] $\mathcal{Z}(F,Q):Q\geq 0$,
i.e. in view of (i): $S: \mbox{sym}(F^T Q) \geq 0$.
\end{itemize}
\end{minipage}
\end{equation}
Examples of $\mathcal{Z}$ satisfying the above are:
\begin{equation}\label{Zs}
\begin{split}
\mathcal{Z}_1(F,Q) &= 2F\mbox{sym}(F^TQ),\\
\mathcal{Z}_2(F,Q) &= 2(\mbox{det} F) \mbox{sym}(QF^{-1}) F^{-1, T}.\\
\end{split}
\end{equation}
We note that in the case of $\mathcal{Z}_2$, the related Cauchy stress tensor
$T_2 = 2(\mbox{det} F)^{-1} \mathcal{Z}_2 F^T = 2\mbox{sym}(QF^{-1})$
is the Lagrangian version of the stress tensor  $2\mbox{sym}\nabla v$ written in
Eulerian coordinates. 
For incompressible fluids $2\mbox{div}(\mbox{sym}\nabla v) = \Delta v$,
giving the usual parabolic viscous regularization of the fluid
dynamics evolutionary system.
For more general viscous stress tensors, see
Appendix \ref{s:genvisc}.

\subsection{An extension: the surface energy}\label{s:surface}
A phenomenological modification that is sometimes used is to
replace \eqref{visco_eq} with
\begin{equation}\label{surface_eq}
\xi_{tt} - \nabla_X\cdot \Big(DW(\nabla\xi) + \mathcal{Z}(\nabla\xi,
\nabla \xi_t) - \mathcal{E}(\nabla^2 \xi)\Big)=0,
\end{equation}
where the surface energy $\mathcal{E}$ is given by:
$$\mathcal{E}(\nabla^2\xi) = \nabla_X\cdot D\Psi(\nabla^2\xi) 
= \left[\sum_{i=1}^3 \frac{\partial}{\partial X_i}
\left(\frac{\partial}{\partial (\partial_{ij}\zeta^k)}\Psi(\nabla^2\xi)
\right) \right]_{j,k:1\ldots 3}$$
for some convex density 
$\Psi:\mathbb{R}^{3\times 3\times 3}\longrightarrow \mathbb{R}$, 
compatible with frame indifference (and isotropy).
A typical example is $\Psi_0(G)=\frac{1}{2}|G|^2$, so that:
\begin{equation}\label{exentropy}
 \mathcal{E}_0(\nabla^2\xi)=\nabla_X \cdot \nabla^2\xi= \Delta_X F
\end{equation}
which is an extension of the 1d case of \cite{Sl}.

Writing the variation of the energy $\int \Psi(\nabla^2\xi)$
in the direction of a test function
$\phi\in\mathcal{C}_c^\infty(\Omega, \mathbb{R}^3)$ we obtain:
\begin{equation}\label{help}
\int_\Omega D\Psi(\nabla^2\xi):\nabla^2\phi 
= \int_\Omega \big(\nabla_X\cdot \mathcal{E}(\nabla^2\xi)\big)\cdot \phi,
\end{equation}
which justifies the last divergence term in (\ref{surface_eq}).

The addition of surface energy is motivated by the
van der Waals/Cahn--Hilliard approach to the stationary equilibrium theory
\cite{Sl,Z8,CGS,SZ}.
This would be an interesting direction for further investigation.

\subsection{Entropy}
After integrating (\ref{visco_eq}) against $\xi_t$ on $\Omega$, and
then integrating by parts, we obtain:
$$\frac{1}{2}\int\partial_t|\xi_t|^2 + \int \Big(DW(\nabla\xi) +
\mathcal{Z}(\nabla\xi, \nabla\xi_t)\Big):\nabla \xi_t = 0,$$
where we used that:
$$\xi_t^T \big(DW(\nabla\xi) + \mathcal{Z}(\nabla\xi,
\nabla\xi_t)\big)\vec n = 0 \quad \mbox{ on } \partial\Omega,$$
a natural assumption following from either (Dirichlet)
clamped boundary conditions $\xi_{|\partial \Omega}= const$
or else free (Neumann) conditions 
$\big(DW(\nabla\xi) + \mathcal{Z}(\nabla\xi, \nabla\xi_t)\big)_{|\partial
\Omega}= 0$ corresponding to the absence of stress on the boundary.

Consequently:
$$\partial_t\int\left(\frac{1}{2}\partial_t|\xi_t|^2 + DW(\nabla\xi)\right) 
= - \int \mathcal{Z}(\nabla\xi, \nabla\xi_t):\nabla\xi_t\leq 0$$
by the Clausius--Duhem inequality, and we see that the integral
$\int \eta$ of the quantity:
\be\label{entr}
\eta(F, v)= \frac{1}{2}|v|^2 + W(F)
\ee
along $F=\nabla\xi$ and $v=\xi_t$, is nonincreasing in time.
In case of (\ref{surface_eq}), using (\ref{help}) with $\phi=\xi_t$ we
obtain that $\int\tilde \eta$ is nonincreasing, for:
$$\tilde \eta= \frac{1}{2}|\xi_t|^2 + W(\nabla \xi) + D\Psi(\nabla^2 \xi).$$

Further, notice that $\eta:\mathbb{R}^{12}\longrightarrow \mathbb{R}$
defined in (\ref{entr}) is an entropy \cite{dafermos_book} 
associated to (\ref{firstorder}).  Indeed, the scalar fields:
$$q_i(F,v) = - v\cdot \left[\frac{\partial}{\partial
    F_{ik}}W(F)\right]_{k=1}^3 \qquad i=1..3$$
define the respective entropy fluxes, in the sense that:
$$\nabla q_i(F,v) = \nabla\eta(F,v) D\tilde G_i(F,v).$$

\section{The planar case}
We now restrict our attention 
to the interesting subclass of planar solutions,
which are solutions in full 3d space that depend only on a
single coordinate direction.
Namely, we  assume that the deformation $\xi$ has the form:
$$\xi(X) = X+ U(z), \qquad X=(x,y,z), \quad U=(u,v,w)\in\mathbb{R}^3,$$
which yields the following structure of the deformation gradient:
\begin{equation}\label{Fplanar}
F = \left[\begin{array}{ccc}1 &0&u_z\\
0&1&v_z\\
0&0&1 + w_z\end{array}\right]
= \left[\begin{array}{ccc}1 &0 & a_1\\
0&1&a_2\\
0&0&a_3\end{array}\right].
\end{equation}
We shall denote $V= (a,b) = (a_1, a_2, a_3, b_1, b_2, b_3)$, where
$a_1 = u_z, a_2 = v_z, a_3 = 1+ w_z$ and $b_1 = u_t,
b_2 = v_t, b_3 = w_t$, with the constraint: 
\be\label{feas}
a_3>0,
\ee
corresponding to $\det F>0$ in the region of physical feasibility of $V$.

Writing $W(a) = W(\left[\begin{array}{ccc}1 &0&a_1\\
0&1&a_2\\ 0&0&a_3\end{array}\right])$, we see that for all $F$ as in
(\ref{Fplanar}) there holds:
$$\nabla_X\cdot (DW(F)) = (D_a W(a))_z.$$
That is, the planar equations inherit a vector-valued 
variational structure echoing the matrix-valued variational
structure, and thus (\ref{fo}) has the
following form:
$$V_t + G(V)_z = 0$$
\begin{equation}\label{dg}
G(V) = (-b, -D_aW(a))^T, \qquad 
DG(V) = \left[\begin{array}{cc}0&-\mbox{Id}_3\\
-M&0\end{array}\right], \qquad M = D^2_aW(a).
\end{equation}
It follows that strict
hyperbolicity of (\ref{dg}) is equivalent to strict convexity of $W$
with $M$ having 3 distinct (positive) eigenvalues.
Also, $\eta(V)=\frac{1}{2}|b|^2+W(a)$ is then a convex entropy:
\begin{equation}\label{en_flu}
\nabla\eta (V) DG(V)=\nabla q(V), \qquad q(V)=-b\cdot D_aW(a).
\end{equation}

\subsection{Energy density $W_0$ and viscosities $\mathcal{Z}_i$}
By a straightforward calculation, we have:
\begin{equation}\label{w0}
W_0(a) = \frac{1}{4}(|a|^2-1)^2 + \frac{1}{2}(a_1^2 + a_2^2) 
\end{equation}
and:
\begin{equation}\label{wcalc}
\begin{split}
\mbox{div}(DW_0(F)) &=
 \Big(u_{zz} + (u_z|U_z|^2 + 2u_zw_z)_z,
~ v_{zz} + (v_z|U_z|^2 + 2v_zw_z)_z,\\
&\qquad\quad
~ 2w_{zz} + (|U_z|^2 + w_z|U_z|^2 + 2w_z^2)_z\Big)^T\\
&=\Big( (|a|^2a_1)_z , ~(|a|^2a_2)_z, ~((|a|^2-1)a_3)_z\Big)^T,
\end{split}
\end{equation}

More generally, one may consider densities of the form:
\be\label{genform}
 W(F)=\frac{1}{4} |F^TF - \mathrm{Id}|^2 + c_2(|F|^2-3)^2 +c_3(|\mathrm{det} F|- 1)^2.
\ee
The $c_2$ term contributes to $DW(F)$ as:
$4c_2(|F|^2-3)F$ in the planar case; this is
$4c_2(2w_z+|U_z|^2)F$, with divergence:
$$ 
\begin{aligned}
&4c_2\Big((u_z|U_z|^2 + 2u_zw_z)_z, (v_z|U_z|^2 + 2v_zw_z)_z,
(|U_z|^2+2w_z + w_z|U_z|^2 + 2w_z^2)_z\Big)^T  \\
&\qquad \qquad
\qquad \qquad
= 4c_2\Big(  ((|a|^2-1)a_1)_z, ((|a|^2-1)a_2)_z, ((|a|^2-1)a_3)_z \Big)^T. 
\end{aligned}
$$
The $c_3$ term contributes to $DW(F)$ the term:
$2(\mathrm{det} F-1)\mathrm{cof} F$, for $F$ with $\mathrm{det} F>0$.
In the planar case, divergence of this term reads 
$2c_3(0,0,w_{zz})^T=
2c_3(0,0,(a_3)_{z})^T .$

Combining, we obtain the general form:
\be\label{gendiv}
\mbox{div}(DW(F)) =
 \Big(  ((\mu_1 |a|^2 +\mu_2)a_1)_z, 
((\mu_1 |a|^2 +\mu_2)a_2)_z, 
((\mu_1 |a|^2 +\mu_2)a_3 + (\mu_3-1) a_3)_z \Big)^T, 
\ee
with $\mu_1=1+4c_2$, $\mu_2=-4c_2$, $\mu_3=2c_3$,
corresponding to elastic potential
\begin{equation*}
W(a)=  \frac{1}{4} \mu_1 |a|^4 + \frac{1}{2} \mu_2 |a|^2 + \frac{1}{2}
\mu_3 (a_3-1)^2 +C,
\end{equation*}
where $C$ is a constant.
The above potential is strictly convex at the identity ($a=(0,0,1)$)
whenever $\mu_1+\mu_2>0$ and $3\mu_1+\mu_2 + \mu_3 > 0$.
It is a simple case of the general form $W(a)=\tilde \sigma(|a|^2,a_3)$
described in Appendix \ref{s:general}.
Restricted to the incompressible planar case of Section \ref{incomp},
\eqref{gendiv} recovers the class of equations studied in \cite{AM}.

\medskip

Regarding the viscous tensors, we obtain:
\begin{equation}\label{z1calc}
\begin{split}
&\mbox{div}\big(\mathcal{Z}_1(F, \dot F)\big)
=  \Big(u_{zzt} + (u_z(2w_{zt}+(|U_z|^2)_t))_z,
~v_{zzt}+ (v_z(2w_{zt}+ (|U_z|^2)_t))_z,\\
&\qquad\qquad\qquad\qquad\quad
~2w_{zzt}+ (|U_z|^2)_{zt} + (w_z(2w_{zt}+(|U_z|^2)_t))_z\Big)^T\\
&= \left(\Big(b_{1,z}+2a_1 a\cdot b_z\Big)_z, ~\Big(b_{2,z}+ 2a_2
a\cdot b_z\Big)_z, ~\Big( 2a_3 a\cdot b_z\Big)_z\right)^T,
\end{split}
\end{equation}
\begin{equation}\label{z2calc}
\begin{split}
\mbox{div}\big(\mathcal{Z}_2(F, \dot F)\big)&
=  \left(\Big(\frac{u_{zt}}{1+w_z}\Big)_z,
~\Big(\frac{v_{zt}}{1+w_z}\Big)_z,
~\Big(\frac{2w_{zt}}{1+w_z}\Big)_z\right)^T\\
&= \left(\Big(\frac{b_{1,z}}{a_3}\Big)_z,
~\Big(\frac{b_{2,z}}{a_3}\Big)_z, ~2\Big(\frac{b_{3,z}}{a_3}\Big)_z\right)^T,
\end{split}
\end{equation}
where $a\cdot b_z = a_1 b_{1,z} + a_2 b_{2,z}+ a_3 b_{3,z}$.

\medskip

Hence, (\ref{visco_eq}) with $W$ and $\mathcal{Z}$ as in 
(\ref{part}) and (\ref{Zs}), has the hyperbolic-parabolic
form:
\begin{equation}\label{hyppar}
V_t + G(V)_z = (B(V)V_z)_z
\end{equation}
with $G(V)$ as in (\ref{dg}) and:
$$M=D_a^2W_0 = \mbox{diag}\Big(|a|^2, |a|^2, |a|^2-1\Big) + 2a\otimes a$$
in view of (\ref{w0}). Further:
\begin{equation}\label{blah}
B=\left[\begin{array}{cc} 0&0\\
0& B_{0,i}
\end{array}\right],\quad
B_{0,1} = \mbox{diag}\Big(1, 1, 0\Big) + 2a\otimes a
~~\mbox{ or } ~~ B_{0,2}= \frac{1}{a_3}\mbox{diag}\Big(1, 1, 2\Big)
\end{equation}
in case of $\mathcal{Z}_1$ and $\mathcal{Z}_2$, respectively. 
Both tensors $B_{0,i}$ are  symmetric and positive definite on
the entire physical region $a_3>0$.

\subsection{The full and the restricted systems in hyperbolic--parabolic form}\label{s:hp}

\subsubsection{Compressible viscoelasticity}
For the viscous stress tensor
$\mathcal{Z}_1$, system \eqref{hyppar} reads:
\begin{equation}\label{bfirstorder}
\begin{array}{l} a_{1,t} - b_{1,z}=0,\\
a_{2,t} - b_{2,z}=0,\\
a_{3,t} - b_{3,z}=0, \end{array}\qquad\quad
\begin{array}{l} \displaystyle{ b_{1,t} -  (|a|^2 a_1)_z =
\left(b_{1,z} + 2a_1 a\cdot b_z\right)_z}\\
 \displaystyle{b_{2,t} -  (|a|^2 a_2)_z =
\left(b_{2,z} + 2a_2 a\cdot b_z\right)_z}\\
 \displaystyle{b_{3,t} -  ((|a|^2-1)a_3)_z=
\left(2a_3 a\cdot b_z\right)_z.}\end{array}
\end{equation}
For the viscous tensor $\mathcal{Z}_2$ we have:
\begin{equation}\label{bfirstorder2}
\begin{array}{l} a_{1,t} - b_{1,z}=0,\\
a_{2,t} - b_{2,z}=0,\\
a_{3,t} - b_{3,z}=0, \end{array}\qquad\quad 
\begin{array}{l} \displaystyle{b_{1,t} -  (|a|^2 a_1)_z =\left(\frac{b_{1,z}}{a_3}\right)_z,}\\
b_{2,t} -  (|a|^2 a_2)_z =  \displaystyle{\left(\frac{b_{2,z}}{a_3}\right)_z,}\\
b_{3,t} -  ((|a|^2-1)a_3)_z =  \displaystyle{2\left(\frac{b_{3,z}}{a_3}\right)_z.}\end{array}
\end{equation}

\subsubsection{The 2D incompressible shear case}\label{incomp}
For an incompressible medium and a shear deformation where
$w =  0$, the system (\ref{bfirstorder2}) reduces to the following one
(naturally, we now denote $a=(a_1, a_2) $ and $|a|^2 = a_1^2 + a_2^2$):
\begin{equation}\label{shearfirstorder}
\begin{array}{l} a_{1,t} - b_{1,z}=0,\\
a_{2,t} - b_{2,z}=0,\end{array}\qquad\quad
\begin{array}{l} \displaystyle{b_{1,t} -  ((|a|^2+1)a_1)_z = b_{1,zz},}\\
 \displaystyle{b_{2,t} -  ((|a|^2+1)a_2)_z = b_{2,zz},}\end{array}
\end{equation}
with an associated pressure of $p=|a|^2$ whose gradient
$(0,0, (|a|^2)_z)^T$
cancels the term $-(|a|^2)_z$ in the $b_3$ equation
of \eqref{bfirstorder2}.
Note that the viscous stress tensor in this case reduces to the Laplacian.
Equations \eqref{shearfirstorder} are a special case of the equations
studied in \cite{AM}; they may be also recognized as the model
for an elastic string.

For the choice $\mathcal{Z}_1$, we obtain:
\begin{equation}\label{shearfirstorderZ2}
\begin{array}{l} a_{1,t} - b_{1,z}=0,\\
a_{2,t} - b_{2,z}=0,\end{array}\qquad\quad
\begin{array}{l} \displaystyle{b_{1,t} -  ((|a|^2+1)a_1)_z = (b_{1,z}+2a_1a\cdot b_z)_z,}\\
 \displaystyle{b_{2,t} -  ((|a|^2+1)a_2)_z = (b_{2,z}+2a_2a\cdot b_z)_z.}
\end{array}
\end{equation}
The incompressible model may be viewed as the formal limit
as $\mu\to +\infty$ of a system with potential $W_0(a) +\mu_3 (a_3-1)^2$, 
penalizing variations in density $\det F=a_3$.
Operationally, this amounts to fixing $a_3= 1$ in a given (compressible)
elastic potential and dropping the equation for $a_3$, to obtain
a reduced shear potential $\check W(a_1,a_2)=W(a_1,a_2,1)$ and equations
whose first-order part have the same variational structure \eqref{dg}
as the full 3d system.

\subsubsection{The 2D compressible case}\label{2D}
Another reduced version of \eqref{bfirstorder2}, 
restricted to the $v-w$ plane is obtained by setting $u=0$.
This is an equally simple system as (\ref{shearfirstorder}),
but with essentially different structure
(we now write $|a|^2 = a_2^2 + a_3^2$):
\begin{equation}\label{bfirstorder2D}
\begin{array}{l}a_{2,t} - b_{2,z}=0,\\
a_{3,t} - b_{3,z}=0,\end{array}\qquad\quad
\begin{array}{l} \displaystyle{b_{2,t} -  (|a|^2a_2)_z =\left(\frac{b_{2,z}}{a_3}\right)_z,}\\
 \displaystyle{b_{3,t} -  ((|a|^2-1)a_3)_z = 2\left(\frac{b_{3,z}}{a_3}\right)_z,}
\end{array}
\end{equation}
while for $\mathcal{Z}_1$, writing $a\cdot b_z = a_2b_{2,z} + a_3
b_{3,z}$, we have:
\begin{equation}\label{bfirstorder2DZ2}
\begin{array}{l} a_{2,t} - b_{2,z}=0,\\
a_{3,t} - b_{3,z}=0,\end{array}\qquad\quad
\begin{array}{l} \displaystyle{b_{2,t} -  (|a|^2a_2)_z =\left(b_{2,z} + 2a_2 a\cdot b_z\right)_z,}\\
 \displaystyle{b_{3,t} -  ((|a|^2-1)a_3)_z = \left(2a_3 a\cdot b_z\right)_z.}\end{array}
\end{equation}

\subsubsection{The 1D cases}
Taking $v=w= 0$, (\ref{bfirstorder2})  further reduces to
a model of the transverse unidirectional perturbations in a beam or
string:
\begin{equation}\label{shearfirstorder1D}
a_{1,t} - b_{1,z}=0,\qquad\quad
b_{1,t} -  (a_1^3+a_1)_z = b_{1,zz}.\\
\end{equation}

Setting $u=v=0$, (\ref{bfirstorder2}) yields the 1D compressible
model for longitudinal perturbations in a viscoelastic rod:
\begin{equation}\label{simplebfirstorder1D}
a_{3,t} - b_{3,z}=0,\qquad\quad
b_{3,t} -  (a_3^3-a_3)_z = 2\left(\frac{b_{3,z}}{a_3}\right)_z.
\end{equation}

\subsubsection{Extension: surface energy and higher-order dispersion}
We mention briefly the effects of modifying by the addition
of surface energy term.
In the planar, incompressible shear case, (\ref{surface_eq}) with
(\ref{exentropy}) becomes:
\begin{equation}\label{shearfirstordermod}
\begin{array}{l} a_{1,t} - b_{1,z}=0,\\
a_{2,t} - b_{2,z}=0,\end{array}\qquad\quad
\begin{array}{l} 
\displaystyle{b_{1,t} -  (a_1 +( a_1^2+a_2^2) a_1 )_z = b_{1,zz}-a_{1,zzz},}\\
\displaystyle{b_{2,t} -  (a_2 +( a_1^2+a_2^2) a_2 )_z = b_{2,zz}-a_{2,zzz}.}
\end{array}
\end{equation}
See \cite{Sl} for a corresponding treatment of the one-dimensional case.

\subsection{Hyperbolic characteristics}

\subsubsection{Compressible case}
Consider the inviscid version of (\ref{hyppar}): $V_t+DG(V)V_z = 0$.
Using the block structure of $DG$ in (\ref{dg}) 
we obtain that its eigenvalues are
$\{\pm\sqrt{m_j}\}_{j=1}^3$ with corresponding eigenvectors
$(\{r_j, ~\mp\sqrt{m_j}r_j)\}_{j=1}^3$, where $m_j$ (and $r_j$) are the
eigenvalues (and corresponding eigenvectors) of the symmetric
matrix $M$. Also, since $m_j$ are
independent of $b$, the linear degeneracy or genuine nonlinearity of
the $\pm\sqrt{m_j}$ characteristic fields of $DG$ is equivalent to the
same properties of the $m_j$ characteristic fields of $M$.

Using now the following formula, valid for $3\times 3$ matrices: 
$\mbox{det}(A+B) = \mbox{det}A + (\mbox{cof}A):B
+ (\mbox{cof}B):A + \mbox{det}B $, we obtain:
\begin{equation}
\begin{split}
m_1 &= |a|^2,\\
m_2 &= \frac{1}{2}\left(4|a|^2 - 1 - \sqrt{(2|a|^2-1)^2 + 8(a_1^2+a_2^2)}\right),\\
m_3 &= \frac{1}{2}\left(4|a|^2 - 1 + \sqrt{(2|a|^2-1)^2 + 8(a_1^2+a_2^2)}\right).
\end{split}
\end{equation}
Note that at $a=(0,0,1)$ we have $m_1=m_2=1$ and $m_3=2$; hence 
$DG$ is (nonstrictly) hyperbolic at $V_0=(0,0,0,b_1, b_2, b_3)$.
Further calculations show that, whenever defined:
\begin{itemize}
\item[(i)] $r_1 = (a_2, ~-a_1,~0)^T$ and the eigenvalues $\pm \sqrt{m_1}$
  correspond to two linearly degenerate fields of $DG$.
\item[(ii)] $r_2 = (-2a_1a_3,~-2a_2a_3, ~3|a|^2-2a_3^2-m_2)^T$ 
\item[(iii)] $r_3 =  (-2a_1a_3,~-2a_2a_3, ~3|a|^2-2a_3^2-m_3)^T$ and
  in the vicinity of $V_0$ the eigenvalues $\pm \sqrt{m_3}$
  correspond to two genuinely nonlinear fields of $DG$.
\end{itemize}

\subsubsection{The 2D incompressible shear case}

The system (\ref{shearfirstorder}) can be written as:
\begin{equation*}
a_t - b_z = 0, \qquad\quad
b_t - (D_aW(a))_z = b_{zz}.
\end{equation*}
Its flux matrix depends on $a=(a_1, a_2)$ and has the form:
\be\label{DG12}
DG_{1-2}= \bp 0 & -\mbox{Id}_2\\
-M_{1-2}& 0 \ep,
\qquad
M_{1-2}= D^2_aW_0(a) = (|a|^2+1) \mbox{Id}_2 + 2a\otimes a,
\ee
where $W_0(a) = \frac{1}{4} |a|^4 + \frac{1}{2}|a|^2$. We see that
strict hyperbolicity, convexity of $W_0$ and existence of strictly
convex entropy are equivalently satisfied here.

Calculating as before,
$DG_{1-2}$ has two genuinely nonlinear characteristic fields,
with eigenvalues $\pm\sqrt{1+3|a^2|}$ and corresponding
eigenvectors 
$$
(\pm a_1, \pm a_2, a_1\sqrt{1+3|a|^2}, a_2\sqrt{1+3|a|^2})^T
$$
in fast modes,
and two linearly degenerate fields with eigenvalues $\pm\sqrt{1+|a^2|}$ 
and eigenvectors $(\pm a_2, \mp a_1, a_2\sqrt{1+|a|^2}, -a_1\sqrt{1+|a|^2})^T$ 
in slow modes. The linear degeneracy reflects 
the rotational degeneracy of the underlying system \cite{F} .

\subsubsection{The 2D compressible case}\label{333}
The flux matrix in (\ref{bfirstorder2D}) depends on $a=(a_2, a_3)$
and has the form:
\be\label{DG23}
DG_{2-3}= \bp 0 & -\mbox{Id}_2\\
-M_{2-3}& 0 \ep,
\qquad
M_{2-3}= \mbox{diag}\Big(|a|^2, |a|^2-1\Big) + 2a\otimes a,
\ee
and we find that $DG_{2-3}$ has two couples of eigenvalues
$\{\pm\sqrt{m_j}\}_{j=2,3}$ with corresponding eigenvectors
$(r_j, ~\mp\sqrt{m_j}r_j)$, where:
$m_2 = \frac{1}{2}\left(4|a|^2 - 1 - \sqrt{(2|a|^2-1)^2 + 8a_2^2}\right),$
$m_3 = \frac{1}{2}\left(4|a|^2 - 1 + \sqrt{(2|a|^2-1)^2 +8a_2^2}\right),$
while $r_j = (-2a_2a_3, ~3|a|^2 - 2a_3^2 - m_j)^T$ (or $r_2=(1,0)^T$
when $a_2=0$).
We see that the in the vicinity of $(0,1, b_2, b_3)$ the matrix
$DG_{2-3}$ is strictly hyperbolic,  the two eigenfields
corresponding to $\pm\sqrt{m_3}$ are genuinely nonlinear.


\subsubsection{The 1D incompressible case}\label{334}
For system (\ref{shearfirstorder1D})
the characteristic speeds are $\pm \sqrt{1+3a_1^2}$, while 
for the system (\ref{simplebfirstorder1D}) they are $\pm \sqrt{3a_3^2-1}$.
Hence the second model is strictly hyperbolic for $|a_3|>1/\sqrt{3}$
and elliptic otherwise; 
this can be recognized as agreeing with certain phase-transitional
viscoelasticity models, except that 
the region $a_3\le 0$ (where $\det F\le0$) is unphysical.

\section{Nonlinear stability framework}\label{nstab}

We now briefly recall the general stability theory of
\cite{Z4,R,RZ}, which reduces the question of nonlinear stability
in (\ref{hyppar}) to verification of an Evans function condition.
Namely, given two endstates $V_-$ and $V_+$ belonging to the regions
of strict hyperbolicity of $DG$, we make a smooth change of coordinates $V\mapsto S(V)$ with
$S :\mathbb{R}^{6}\longrightarrow\mathbb{R}^{6}$ given by: 
$S(V)=D\eta(V) = D_aW(a) \oplus b$.
The system (\ref{hyppar}) is equivalent to:
$$\tilde A^0(S) S_t + \tilde A(S)S_z = (\tilde B(S) S_z)_z,$$
where with a slight abuse of notation we use $S=S\circ
V:[0,\infty)\times \mathbb{R}^3\longrightarrow \mathbb{R}^{12}$.
Above:
\begin{equation*}
\begin{split}
\tilde A &= DG(V)\tilde A^0 = 
\left[\begin{array}{cc}0&-\mbox{Id}_3\\-MQ&0\end{array}\right],
\qquad \tilde B = B(V) \tilde A^0 = B(V), \qquad
\tilde A^0=\left[\begin{array}{cc} Q& 0 \\ 0 & \mbox{Id}_3\end{array}\right],
\end{split}
\end{equation*}
where $Q=Q(V)$ is defined as follows. In some open neighborhoods of $V_-$ and $V_+$ 
(where $M$ is positive definite) we set $Q=M^{-1}$, in which case:
\begin{equation*}
\tilde A^0 = \frac{\partial V}{\partial S}= (D^2_V\eta)^{-1} = 
\left[\begin{array}{cc}M^{-1}&0\\0&\mbox{Id}_3\end{array}\right].
\end{equation*}
In the region where $M$ is negative definite, we set $Q=\mbox{Id}_3$.
In between the two above mentioned regions, $Q$ is a smooth,
symmetric and positive definite interpolation of the two matrix fields
$M$ and $\mbox{Id}_3$.
This construction allows us to treat also the case of profiles passing through elliptic
regions, but with hyperbolic endstates (in a similar spirit as for
the van der Waals gas dynamics examples mentioned in \cite{MaZ4,Z4}).

\smallskip

We first check the validity of the structural conditions (A1)--(A3) of \cite{Z4}:

\smallskip

(A1)  $\tilde A(V_-)$ and $\tilde A(V_+)$  are symmetric matrices.
$\tilde A_0$ is symmetric and positive definite (on the whole
$\mathbb{R}^6$). Also, the $3\times 3$ principal minor of $\tilde A$, 
corresponding to the purely hyperbolic part of the system (\ref{hyppar}), 
equals identically $0_3$ hence it is always symmetric, as required.

\smallskip

(A2)  At the endstates $V_\pm$ there holds: 
no eigenvector of $DG$ belongs to the kernel of $B$. In
the region of strict hyperbolicity of $DG$ this condition is equivalent to: no
eigenvector of $M$ is in the kernel of $B_{0,1}$, readily satisfied.

\smallskip

(A3)  $\tilde B$ has the required block structure
$\tilde B=\left[\begin{array}{cc} 0_3&0_3\\0_3&B_0\end{array}\right]$
as in (\ref{blah}). The symmetrization of the minor corresponding 
to the parabolic part of the system (\ref{hyppar}): 
$\mbox{sym } B_{0,i} = B_{0,i}$ is uniformly elliptic in any region in
$V$ of the form: $0<a_3<C$ in case of $B_{0,2}$, and 
$a_3^2 > c(1+a_1^2 + a_2^2)$ in case of $B_{0,1}$ (where $c, C>0$ are
some uniform constants).

\smallskip

We hence find that shock profiles of each
of the planar systems considered in this paper satisfy
conditions (A1)--(A3) of \cite{Z4} defining the
class of symmetrizable hyperbolic--parabolic systems and profiles
to which the theory of nonlinear stability of viscous shock profiles
developed in \cite{MaZ2,MaZ3,MaZ4,Z4,R,RZ} applies,
provided:

(i) the endstates $V_\pm$ lie in the region of strict
hyperbolicity of $DG$,

(ii) The profile $\{\bar V (\cdot)\}$ lies in some region where
the chosen  $B_{0,i}$ is uniformly elliptic.

\medskip

We now validate the additional technical conditions (H0)--(H3) of \cite{Z4}.
Note that the remaining conditions (H4)--(H5) are needed only for the
multi-dimensional systems, as they automatically hold for
systems in 1 space dimension.

(H0)  $G,B,S\in\mathcal{C}^5$.

\smallskip

(H1)  the shock speed $s$ under consideration is non-zero (note that $0$
is the only eigenvalue of the $3\times 3$ principal minor of $DG$,
which indeed is $0_3$).  As remarked in section \ref{s:5}, $s\neq0$ 
for any profile with endstates belonging to the strict
hyperbolicity region of $DG$.

\smallskip

(H2)  $s$ is distinct from the eigenvalues of $DG(V_\pm)$.

\smallskip

(H3)  local to $\bar V(\cdot)$, the set of traveling wave solutions to
(\ref{hyppar}) connecting $(V_-, V_+)$ (with thus determined speed
$s$), forms a smooth finite-dimensional submanifold 
$\{\bar V^\delta(\cdot)\}$ of $\mathcal{C}^1(\mathbb{R},
\mathbb{R}^6)$,  parametrized by $\delta\in B(0, r)\subset
\mathbb{R}^\ell$, and $\bar V^0 = \bar V$.

\subsection{The Evans condition}\label{s:stabtheory}

Linearizing the hyperbolic-parabolic system (\ref{hyppar}) about 
its viscous shock solution of \eqref{hyppar}:
$$ V(z,t)=\bar V(z-st), \quad
\lim_{z\to \pm \infty}\bar V(z)=V_\pm, $$
which satisfies: $-s\bar V + (G(\bar V))_z = (B(\bar V)\bar V_z)_z$, 
and further changing to co-moving coordinates $\tilde z=z-st$,
we obtain the equivalent evolution equations:
\be\label{lin}
V_t=\CalL V:=(\CalB V_z)_z-(\mathcal{G}V)_z.
\ee
Here $\mathcal{G}$ and $\CalB$ are the following matrix fields depending on $z$:
$$\mathcal{G}(z)=DG(\bar V(z)) -s\mbox{Id} - DB(\bar V(z))^T\bar V_z(z),
\qquad \CalB(z) = B(\bar V(z)),$$
and converging asymptotically
to values $\mathcal{G}(\pm \infty)=DG(V_\pm)-s\mbox{Id}$
and $\CalB(\pm \infty)=B(V_\pm)$.
Towards investigating stability of (\ref{lin}), one seeks eigenvalues
$\lambda \in\mathbb{C}$ of $\mathcal{L}$, that is solutions to 
the system $\CalL V = \lambda V$ written in its first-order form:
\be\label{firstorderE}
Z'(z,\lambda)=\mathcal{A}(z,\lambda)Z(z,\lambda).
\ee
The augmented ``phase variable'' $Z$ consists of  $V=(a,b)$ and
the derivative $b'$ of its parabolic-like component.

As shown in \cite{GZ,ZH,MaZ3,MaZ4,Z4},
under conditions (H0), (H1), (H2) it is possible to
define an analytic {\it Evans function} 
$D: \{\lambda\in\mathbb{C}; Re ~\lambda \ge 0\}\longrightarrow
\mathbb{C}$  associated with (\ref{firstorderE})
and hence consequently associated with $\CalL$ and with the original problem (\ref{hyppar}).
We shall now briefly sketch this construction, for further details see e.g. \cite{AGJ,GZ,Z4,HuZ}.

In the first step one observes that the complex matrix field 
$\mathcal{A}(z,\lambda)\in\mathbb{C}^{N\times N}$ 
in (\ref{firstorderE}) is analytic in $\lambda$ and has an exponential decay 
to the respective $\mathcal{A}_\pm(\lambda)$ as
$z\to\pm\infty$ (uniformly in bounded $\lambda$). 
The second step consists in proving that (\ref{firstorderE}) on each of
the half-lines $(-\infty,0]$ and $[0,\infty)$, is equivalent to:
$$\tilde Z'(z) = \mathcal{A}_-(\lambda)\tilde Z(z), \quad z\leq 0
\qquad \mbox{ and }\qquad 
\tilde Z'(z) = \mathcal{A}_+(\lambda)\tilde Z(z), \quad z\geq 0,$$
under change of variables $ Z(z) = P_-(z,\lambda)\tilde Z(z)$ for
$z\leq0$, and $ Z(z) = P_+(z,\lambda)\tilde Z(z)$ for $z\geq0$.
Existence of such (non-unique) analytic in $\lambda$ and invertible matrix fields
$P_\pm(z,\lambda)\in\mathbb{C}^{N\times N}$, decaying exponentially to $\mbox{Id}$
as $z\to\pm\infty$, is achieved  by a conjugation lemma \cite{Z4}.

Further, denote by $\{\tilde Z_i^+(\lambda)\}_{i=1..k}$ the (analytic in
$\lambda$) basis of the stable space $\mathcal{S}$ of $\mathcal{A}_+(\lambda)$,
and likewise let $\{\tilde Z_i^-(\lambda)\}_{i=k+1..N}$ be the 
basis of the unstable space $\mathcal{U}$ of $\mathcal{A}_-(\lambda)$, where 
the consistency of the dimensions follows from assumptions (H1), (H2).  
Define:
$$Z_i^+(z,\lambda) = P_+(z,\lambda)\tilde Z_i^+(\lambda), \quad z\geq 0
\qquad \mbox{ and } \qquad Z_i^-(z,\lambda) = P_-(z,\lambda)\tilde
Z_i^-(\lambda), \quad z\leq 0.$$
Clearly, given any $Z_0\in span\{Z_i^+(z_0,\lambda) \}_{i=1..k}$,
$z_0\geq 0$, there exists a solution to (\ref{firstorderE}) on
$[z_0,\infty)$ decaying exponentially to $0$ as
$z\to\infty$, and with initial data $Z(z_0) = Z_0$. 
It has the property that $Z(z,\lambda)\in
 span\{Z_i^+(z,\lambda) \}_{i=1..k}$ for all $z\geq z_0$. 
A similar assertion of backward resolvability of (\ref{firstorderE})
is true for  $Z_0\in span\{Z_i^-(z_0,\lambda)\}_{i=k+1.. N}$,
$z_0\leq 0$ with exponential decay at $z\to -\infty$.

The Evans function is now introduced as the following Wronskian:
\be\label{Ddef1}
D(\lambda)=\det \Big(Z_1^+(0,\lambda), \dots, Z_k^+(0,\lambda),
Z_{k+1}^-(0,\lambda), \dots, Z_N^-(0,\lambda)\Big).
\ee
Away from the origin $\lambda=0$, $D$  vanishes at $\lambda$ with
$Re~\lambda\geq 0$ if and only if $\lambda$
is an eigenvalue of $\CalL$, corresponding to existence of a solution
$Z(z,\lambda)$  of $\mathcal{L}Z = \lambda Z$, 
decaying to $0$ at both $z\to \pm \infty$.
Indeed, the multiplicity of the root is equal to the multiplicity
of the eigenvalue \cite{GJ1,GJ2,MaZ3,Z4}.
The meaning of the multiplicity of the root of $D$ at embedded eigenvalue
$\lambda=0$ is less obvious, but is always greater than or equal to the
order of the embedded eigenvalue \cite{MaZ3,Z4}.

In agreement with \cite{MaZ3,Z4}, we define the {\it Evans stability condition}:
\medskip

(D) $D$ has no root in $\{Re ~\lambda\geq 0\}$ except for $\lambda=0$,
which is the root of multiplicity $\ell$.
\medskip

Note that under assumption (H3), the condition (D) is equivalent to
$D$ having precisely $\ell$ zeros 
in $\{Re ~\lambda\geq 0\}$.

\subsection{Type of the shock}\label{s:type}

Define:
\begin{equation*}
\begin{split}
\tilde \ell  = &~ \mbox{dimension of
the unstable subspace of } DG(V_-) \\
& +  \mbox{ dimension of the stable
subspace of } DG(V_+) - \dim V,
\end{split}
\end{equation*}
where $\dim V = 6$ is the dimension of the whole space.
Then, the hyperbolic shock $(V_-,V_+)$ is
defined to be: 

(i) of {\it Lax type} if $\tilde \ell=1$, 

(ii) of {\it overcompressive type} if $\tilde \ell>1$,

(iii) of {\it undercompressive type} if $\tilde \ell <1$.


\noindent If $\tilde \ell=\ell \ge 1$ or $\tilde \ell < \ell=1$, with
$\ell$ as in (H3),
then the viscous shock $\bar V$ is defined to be of {\it pure}
{\it Lax}, {\it overcompressive}, or {\it undercompressive} type, according to
the hyperbolic classification just above.
Otherwise, $\bar V$ is defined as {\it mixed under-overcompressive type},
\cite{LZu,ZH,MaZ3,Z4}.  All the shocks considered
in this paper appear to be of pure type.  Indeed, though artificial
examples are easily constructed \cite{LZu,ZH},
we do not know of any physical example of a mixed-type shock.

\subsection{Linear and nonlinear stability}\label{s:stabsec}

Consider a planar viscoelastic shock for which the endstates $V_\pm$ 
lie in the region of strict hyperbolicity and profile $\{\bar V(\cdot)\}$
lies in the region for which $B_{0,i}$ is uniformly elliptic.

We have the following basic results relating the Evans condition
(D) to stability.

\begin{proposition}[\cite{MaZ3}]\label{Dcond}
Assume (H0), (H2) and (H3). The Evans condition (D) is necessary and sufficient for
the linearized stability $L^1\cap L^p \to L^p$ of $\bar V$, for all
$1\le p\le \infty$: 
$$ \|e^{t\mathcal{L}} f\|_{L^p}\leq C\left(\|f\|_{L^1} + \|f\|_{L^p}\right).  $$
\end{proposition}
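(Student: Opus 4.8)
The plan is to prove both implications through the spectral (inverse-Laplace) representation of the solution operator $e^{t\mathcal{L}}$ of \eqref{lin}, together with sharp bounds on the resolvent kernel of $\mathcal{L}$; the organizing principle is that the zeros of the Evans function $D$ from \eqref{Ddef1} are exactly the poles of this kernel in $\{\mathrm{Re}\,\lambda\ge 0\}$. First I would write
$$e^{t\mathcal{L}} f = \frac{1}{2\pi i}\int_{\Gamma} e^{\lambda t}(\lambda I - \mathcal{L})^{-1} f \, d\lambda,$$
with $\Gamma$ a contour to the right of the spectrum, and realize $(\lambda I-\mathcal{L})^{-1}$ as an integral operator with kernel $G_\lambda(x,y)$. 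Using the conjugated decaying solutions $Z_i^{\pm}(\cdot,\lambda)$ of \eqref{firstorderE} that enter \eqref{Ddef1}, a variation-of-parameters (Cramer's-rule) construction expresses $G_\lambda$ as a bounded bilinear combination of these solutions divided by $D(\lambda)$. Hence $G_\lambda$ is meromorphic on $\{\mathrm{Re}\,\lambda\ge 0\}$, with poles coinciding with the zeros of $D$ and pole orders equal to the zero multiplicities (the equivalence recorded after \eqref{Ddef1}); under (D) the sole singularity in the closed right half-plane is the $\ell$-fold point $\lambda=0$.

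For sufficiency I would derive the two families of kernel bounds needed to evaluate the contour integral. In the high-frequency regime (large $|\lambda|$, or $\lambda$ bounded away from $0$) the structural conditions (A1)--(A3) and the symmetrizable parabolic form of \eqref{lin} yield Kawashima-type resolvent estimates showing $\mathcal{L}$ is sectorial there, so that part of $\Gamma$ may be pushed into $\{\mathrm{Re}\,\lambda<0\}$ and contributes exponentially decaying, smoothing terms. In the low-frequency regime ($\lambda$ near $0$) I would expand the limiting spatial eigenvalues and eigenprojections of $\mathcal{A}_\pm(\lambda)$ in powers of $\sqrt{\lambda}$ and perform the Mascia--Zumbrun scattering decomposition of $G_\lambda$ into a leading Gaussian (heat-kernel) part, excited/scattering parts, and a faster-decaying remainder. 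Deforming the contour around the origin and integrating these pieces produces pointwise bounds of Gaussian and error-function type on the temporal Green's function $G(x,t;y)$; integrating in $y$ against $f$ and taking $L^p$ norms yields $\|e^{t\mathcal{L}}f\|_{L^p}\le C(\|f\|_{L^1}+\|f\|_{L^p})$, the $\|f\|_{L^1}$ term reflecting the conservative Gaussian leading behavior and the $\|f\|_{L^p}$ term the short-time part.

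For necessity I would argue contrapositively. If (D) fails there is a root $\lambda_0$ of $D$ with $\mathrm{Re}\,\lambda_0\ge 0$ and $\lambda_0\ne 0$, or the multiplicity at the origin exceeds $\ell$. By the characterization above, such a $\lambda_0$ is a genuine eigenvalue of $\mathcal{L}$ with an exponentially decaying (hence $L^1\cap L^p$) eigenfunction $V_0$; then $e^{t\mathcal{L}}V_0=e^{\lambda_0 t}V_0$ is non-decaying, contradicting the uniform bound. An excess zero at $\lambda=0$ similarly produces, via the manifold structure in (H3), a generalized eigenfunction whose low-frequency response grows polynomially in $t$, again violating the estimate.

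The hard part will be the low-frequency analysis at $\lambda=0$: since the system is parabolic and $\bar V$ is a traveling wave, the essential spectrum is tangent to the imaginary axis at the origin, so $G_\lambda$ is not meromorphic but carries algebraic $\sqrt{\lambda}$ branch singularities there. The delicate matched/scattering expansion required to extract the sharp diffusive Green's function bounds --- as opposed to the high-frequency sectorial estimates, which follow fairly directly from (A1)--(A3) --- is the technical heart of the argument, and is precisely what is carried out in \cite{MaZ3}.
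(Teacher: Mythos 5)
The paper offers no proof of Proposition \ref{Dcond} --- it is quoted directly from \cite{MaZ3} --- and your outline faithfully reconstructs that reference's argument: the inverse-Laplace representation of $e^{t\mathcal{L}}$, the construction of the resolvent kernel from the decaying solutions entering \eqref{Ddef1} so that its singularities in $\{\mathrm{Re}\,\lambda\ge 0\}$ are governed by the zeros of $D$, high-frequency sectorial bounds from the symmetrizable hyperbolic--parabolic structure, and the low-frequency expansion in powers of $\sqrt{\lambda}$ yielding pointwise Gaussian/error-function bounds on the Green function, which integrate to the stated $L^1\cap L^p\to L^p$ estimate. The one soft spot is your necessity argument in the case of a hypothetical zero of $D$ on the imaginary axis away from the origin: a bounded, purely oscillatory eigenmode $e^{i\tau t}V_0$ does not by itself contradict the uniform bound as stated, and in \cite{MaZ3} this case is handled through the pole-order/multiplicity analysis of the resolvent kernel near the imaginary axis rather than by the simple ``non-decaying mode'' contradiction that suffices when $\mathrm{Re}\,\lambda_0>0$.
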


\begin{proposition}[\cite{MaZ4,RZ}]\label{orbital}
Assume (H0), (H2), (H3) and (D). Then we have:

{\rm{(i) Stability.}} For any initial data $\tilde V(\cdot, 0)$ with:
$$E_0:=\|(1+|z|^2)^{3/4}( \tilde V(\cdot, 0)- \bar V)\|_{ H^5} <<1$$
sufficiently small, a solution $\tilde V$ of \eqref{hyppar} exists for
all $t\ge 0$ and:
\begin{equation} \label{stabstatement}
\|(1+|z|^2)^{3/4}( \tilde V(\cdot, t)- \bar V(\cdot-st))\|_{ H^5}\le CE_0.
\end{equation}

{\rm{(ii) Phase-asymptotic orbital stability.}}
There exist $\alpha(t)$ and $\alpha_\infty$ such that:
\begin{equation}\label{stabstatement2}
\| \tilde V(\cdot, t) -  \bar V^{\alpha(t)}(\cdot -st)\|_{L^p}
\le CE_0(1+t)^{-(1-1/p)/2}
\end{equation}
and:
\begin{equation}
\label{phasebd}
| \alpha(t)- \alpha_\infty|\le C E_0 (1+t)^{-1/2}, 
\qquad |\dot \alpha(t)| \le C E_0 (1+t)^{-1},
\end{equation}
for all $1\le p\le \infty$.
\end{proposition}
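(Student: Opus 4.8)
The plan is to deduce this proposition from the abstract nonlinear stability theory for symmetrizable hyperbolic--parabolic systems developed in \cite{MaZ4,RZ}, by checking that all of its structural and technical hypotheses are in force. In Section \ref{nstab} we have already verified conditions (A1)--(A3) and (H0)--(H3) for each of the planar systems under consideration; since we work in a single space dimension, the remaining multidimensional conditions (H4)--(H5) hold automatically. With (H0), (H2), (H3) and the Evans condition (D) assumed, the main theorems of \cite{MaZ4} (in the Lax and overcompressive cases) and \cite{RZ} then apply directly to \eqref{hyppar}, yielding both the global existence and boundedness assertion (i) and the phase-asymptotic orbital stability (ii) with the stated rates. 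So the real content to be supplied is the hypothesis-checking, and the argument below merely recalls what is being invoked.

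For completeness I would first upgrade the linearized $L^1\cap L^p\to L^p$ bound of Proposition \ref{Dcond} to a detailed pointwise/$L^p$ description of the solution operator $e^{t\mathcal{L}}$ for \eqref{lin}. This comes from inverse-Laplace-transforming the resolvent $(\lambda-\mathcal{L})^{-1}$, whose kernel is built from the decaying solutions $Z_i^\pm(z,\lambda)$ used to define $D(\lambda)$; condition (D) ensures that the only spectral contribution from $\{\mathrm{Re}\,\lambda\ge 0\}$ is the $\ell$-fold root at $\lambda=0$, encoding the translational (and, when $\ell>1$, additional) zero modes. The resulting Green's function splits into a localized part, decaying exponentially in $z$ and algebraically in $t$, plus scattering terms of Gaussian/error-function type that carry the excited zero-mode mass outward along the outgoing hyperbolic characteristics of $DG(V_\pm)$.

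Next I would set up the nonlinear iteration. Writing $\tilde V(z,t)=\bar V^{\alpha(t)}(z-st)+\mathcal{V}(z,t)$ and substituting into \eqref{hyppar}, Duhamel's formula gives coupled integral equations for the perturbation $\mathcal{V}$ and the phase $\alpha(t)$. The time-dependent shift $\alpha(t)$ is chosen implicitly so as to cancel the slowly decaying (non-$t$-integrable) contribution of the zero mode to $\mathcal{V}$, leaving a transverse perturbation that decays at the faster rates of \eqref{stabstatement2}. The smoothness from (H0) makes the Taylor remainders of the fluxes and viscosities quadratic in $(\mathcal{V},\mathcal{V}_z)$, and this quadratic structure supplies the extra decay needed to close the iteration in the weighted $H^5$ norm of (i); the bounds on $\alpha$ and $\dot\alpha$ in \eqref{phasebd} are then read off from the scattering part of the Green's function.

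The main obstacle is exactly the treatment of the translational zero mode and of the outgoing hyperbolic signals: the bare semigroup does not decay in $L^\infty$ because of the embedded zero eigenvalue at $\lambda=0$, and only after the correct, implicitly defined choice of $\alpha(t)$ — combined with sharp, characteristic-by-characteristic Green's function bounds — does the transverse perturbation decay at the sharp rates asserted. Verifying that this machinery of \cite{MaZ4,RZ} transfers verbatim to our setting reduces, as noted, entirely to the structural and technical hypotheses already confirmed, which is all that must genuinely be established here.
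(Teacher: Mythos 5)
Your proposal matches the paper's treatment: Proposition \ref{orbital} is quoted directly from \cite{MaZ4,RZ}, and the paper's only real work is the verification of (A1)--(A3) and (H0)--(H3) carried out in Section \ref{nstab}, exactly as you identify. Your supplementary sketch of the underlying machinery (pointwise Green function bounds from the resolvent, Duhamel iteration with an implicitly defined phase shift $\alpha(t)$ absorbing the zero mode) is an accurate description of what those references actually prove, so nothing is missing.
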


\begin{lemma}\label{DH}
Assume (H0), (H2) and (D). If $\tilde \ell=\ell$ or $\ell=1$,
with $\ell$ as in (D), then  (H3) holds with the same value $\ell$.
In particular, these conditions together
imply nonlinear time-asymptotic orbital stability.
\end{lemma}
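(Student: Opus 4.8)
The plan is to deduce the geometric condition (H3) from the analytic Evans condition (D), after which the stability statement follows by feeding the now-complete hypothesis set (H0), (H2), (H3), (D) into Proposition \ref{orbital}. The bridge between (D) and (H3) is the identification of the integer $\ell$ appearing in (D) (the order of the root of $D$ at $\lambda=0$) with the dimension of the manifold of traveling-wave connections. First I would identify the tangent space at $\bar V$ to the set of profiles connecting $(V_-,V_+)$ with the space of decaying solutions of $\CalL V=0$: since $\CalL V = \partial_z(\CalB V_z - \mathcal{G}V)$, any solution satisfies $\CalB V_z-\mathcal{G}V\equiv c$, and decay of $V$ together with $\bar V_z\to 0$ at $\pm\infty$ forces $c=0$, so that decaying solutions of $\CalL V=0$ are exactly the solutions of the linearized integrated profile equation, i.e.\ the tangent vectors along the connection set. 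Writing $d$ for this dimension (the geometric multiplicity of $0$ as an eigenvalue of $\CalL$), translation invariance always supplies the nontrivial element $\bar V_z$, so $d\ge 1$.

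Next I would pin down $d$. The multiplicity facts recalled for \eqref{Ddef1} (from \cite{GJ1,GJ2,MaZ3,Z4}) give that the order of the root of $D$ at $\lambda=0$ is at least the order of the embedded eigenvalue, which in turn dominates the geometric multiplicity $d$; with (D) this yields $d\le\ell$. For the matching lower bound I split into the two hypothesized cases. If $\ell=1$ (Lax or undercompressive), then $d\le 1$ and $d\ge1$ force $d=1$, so the connection set is precisely the one-parameter family of translates $\{\bar V(\cdot-\alpha)\}$, a smooth $1$-manifold, and (H3) holds with $\ell=1$. If instead $\tilde\ell=\ell$ (Lax or overcompressive), I would use the index count for the traveling-wave flow \eqref{firstorderE} at $\lambda=0$: the unstable manifold of the rest state at $-\infty$ and the stable manifold of the rest state at $+\infty$ meet in dimension at least $\tilde\ell$, the Lax index being the expected intersection dimension, whence $d\ge\tilde\ell=\ell$ and thus $d=\ell$. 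In either case $d=\ell$, and the hypothesis that the root of $D$ has order \emph{exactly} $\ell$ supplies the nondegeneracy (absence of a Jordan block or higher tangency at $\lambda=0$) that renders the intersection of the two invariant manifolds transverse; by the implicit function theorem the connection set is then a smooth $\ell$-dimensional submanifold of $\mathcal{C}^1(\mathbb{R},\mathbb{R}^6)$ through $\bar V$, establishing (H3). The concluding assertion is then immediate, since Proposition \ref{orbital} applies under (H0), (H2), (H3), (D).

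The main obstacle is the overcompressive lower bound $d\ge\tilde\ell$ in the case $\tilde\ell=\ell>1$, together with its reconciliation with the Evans order. One must translate the purely hyperbolic count defining $\tilde\ell$ (eigenvalue counts of $DG(V_\pm)$) into a statement about the augmented, only partially parabolic flow \eqref{firstorderE}, and then rule out the degenerate possibility $d<\ell$ using that $D$ vanishes to order exactly $\ell$ rather than higher. Equivalently, the delicate point is that for pure-type shocks the order of the Evans root at the \emph{embedded} point $\lambda=0$ coincides with, rather than merely bounds, the connection-manifold dimension; the two structural restrictions $\tilde\ell=\ell$ and $\ell=1$ are precisely what close the gap between the geometric dimension $d$ and the analytic order $\ell$ in the overcompressive and the Lax/undercompressive regimes, respectively.
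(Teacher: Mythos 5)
Your argument is correct and follows essentially the same route as the paper's (very terse) proof: the lower bound $d\ge\tilde \ell$ coming from the \cite{MaZ3} dimension counts for the stable/unstable manifolds of the rest points of the traveling-wave ODE, combined with the upper bound $d\le \ell$ forced by (D), which together yield the transversality (the paper's ``maximal transversality'') from which (H3) follows. You have simply made explicit the tangent-space identification and the case analysis that the paper delegates to the citations \cite{GZ,ZH,MaZ3}.
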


\begin{proof}
The claim follows by the existence theory of \cite{MaZ3},
relating the dimensions of stable and unstable manifolds of
the rest points $V_\pm$ in the traveling-wave ODE, to the hyperbolic
index $\tilde \ell$.
Further \cite{GZ,ZH,MaZ3},
stability condition (D) implies ``maximal transversality'' consistent with existence of
a profile of the traveling-wave connection as a solution of the
traveling-wave ODE  (i.e. actual transversality), yielding 
(H3) with $\tilde \ell=\ell$ in the case $\tilde\ell\ge 1$,
and (H3) with $\ell=1$.
\end{proof}

Combining Proposition \ref{orbital} with Lemma \ref{DH}, we obtain:

\bt\label{main2}
For each of the planar systems considered in this
paper, every viscous Lax, overcompressive, or undercompressive shock
satisfying:
\begin{itemize}
\item[\rm{(i)}] condition (H2) (noncharacteristicity),
\item[\rm{(ii)}] with endstates lying in the region of strict hyperbolicity of $DG$,
\item[\rm{(iii)}] with profile lying in the region of uniform ellipticity of $B_{0,i}$,
\item[\rm{(iv)}] satisfying (D), 
\end{itemize}
is linearly and nonlinearly orbitally stable.
\et

In particular, Propositions \ref{Dcond}, \ref{orbital} and Theorem \ref{main2} apply to 
profiles with $a\in\mathbb{R}^3$ such that the corresponding $F$ of
the form (\ref{Fplanar}) is contained in a sufficiently
small neighborhood of $SO(3)$.  In the incompressible shear case, 
they apply to any profile with endstates $a_\pm\ne 0$.

The condition (H2) corresponds to noncharacteristicity of the shock,
which holds generically. It guarantees also exponential decay of the shock to
its endstates \cite{MaZ3,Z4},
which is needed for efficient numerical approximation of the profile.

Finally, we remark that strict hyperbolicity at $V_\pm$
is not necessary for existence of profiles, 
but only to apply the basic stability framework
developed in this section.
When hyperbolicity fails, the corresponding endstate
is unstable as a constant solution; however, this
instability can be stabilized by convective effects
if unstable modes are convected sufficiently rapidly
into the shock zone; see Appendix \ref{s:complex}.
This situation cannot occur for shear flows, for which
all states are hyperbolic, but would be interesting to
investigate in the compressible case.

\subsection{The integrated Evans condition}\label{s:Evansint}
Making the substitution $\tilde V(z) = \int_{-\infty}^z
V(y)~\mbox{d}y$ and integrating the equations in  
(\ref{firstorderE}) from $-\infty$ to $z$, we obtain after dropping
the tilde notation:
\begin{equation}\label{inteval1}
\lambda V = \tilde{\mathcal{L}} V:= \mathcal{B} V'' - \mathcal{G}V'.
\end{equation}
We conclude for any $\lambda \ne 0$, 
satisfaction of (\ref{firstorderE}) for a solution $V$ decaying
exponentially up to one derivative, implies that
$\tilde V(z)$ is also exponentially
decaying and satisfies (\ref{inteval1}).

Associated with $\tilde \CalL$ is an {\it integrated Evans function}
$\tilde D(\lambda)$, which like $D$ is analytically defined
on the nonstable half-plane $\{Re ~\lambda \ge 0\}$,
through the construction sketched in section \ref{s:stabtheory}.
In the Lax and overcompressive cases,
the change to integrated coordinates has the
effect of removing the zeros of $D$ at the origin, making the Evans
function easier to compute numerically and hence the stability 
condition easier to verify.

\begin{proposition}[\cite{ZH,MaZ3}]\label{intcond}
Assume (H0), (H2). Then the Evans condition (D) is equivalent to the 
following {\rm integrated Evans condition}:
\begin{itemize}
\item[\rm{(i)}] for the Lax and  overcompressive shock types:

\smallskip

$\rm (\tilde D)~~$ the integrated Evans function $\tilde D$ is
nonvanishing on $\{ Re ~\lambda \ge 0\}$,
\medskip

\item[\rm{(ii)}] for the undercompressive shock type:
\medskip

$\rm (\tilde D')~~$ the function  $\tilde D$ has on 
$\{Re~ \lambda \ge 0\}$
a single zero of multiplicity $1+|\tilde \ell|$, at $\lambda=0$.
\end{itemize}
\end{proposition}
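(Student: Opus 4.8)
The plan is to exploit the bijection between exponentially decaying solutions of the differentiated eigenvalue problem (\ref{firstorderE}) and of the integrated problem (\ref{inteval1})---already recorded in the text preceding the statement---and then to track carefully how the orders of vanishing of $D$ and $\tilde D$ differ at the single degenerate point $\lambda=0$. First I would make the solution correspondence precise for $\lambda\neq 0$. Writing $\mathcal{L}V=\partial_z(\mathcal{B}V'-\mathcal{G}V)$ and $\tilde{\mathcal{L}}W=\mathcal{B}W''-\mathcal{G}W'$, one checks directly that $\partial_z\tilde{\mathcal{L}}(\int_{-\infty}^z V)=\mathcal{L}V$, so that $V$ is an exponentially decaying solution of $\mathcal{L}V=\lambda V$ exactly when $W=\int_{-\infty}^z V$ is an exponentially decaying solution of $\tilde{\mathcal{L}}W=\lambda W$, the integration constant being forced to vanish by the decay requirement as soon as $\lambda\neq 0$; conversely $V=W'$. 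Hence for $\lambda\neq 0$ in $\{\mathrm{Re}~\lambda\geq 0\}$ the eigenvalues of $\mathcal{L}$ and $\tilde{\mathcal{L}}$ coincide together with their multiplicities, and by the identification of Evans-function root order with eigenvalue multiplicity quoted after (\ref{Ddef1}), $D$ and $\tilde D$ vanish to the same order at every such $\lambda$.

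The second step is to compare the two Evans functions at the origin, which is the crux. Both $D$ and $\tilde D$ are built as Wronskians of analytic bases of the decaying and growing subspaces of the conjugated limiting flows $\mathcal{A}_\pm(\lambda)$; what changes on passing to integrated coordinates is the dimension of these subspaces in the limit $\lambda\to 0$, because the slow hyperbolic modes---those associated with eigenvalues of $\mathcal{A}_\pm(\lambda)$ that cross the imaginary axis as $\lambda\to 0$---are shifted by one unit of decay or growth under the operator $\partial_z$ relating the two formulations. The number of such crossing modes at $-\infty$ and $+\infty$ is precisely the count defining the hyperbolic index $\tilde\ell$. I would therefore invoke the conjugation (gap) lemma together with the low-frequency analysis of \cite{ZH,MaZ3}, which relates the order of the root of $D$ at $\lambda=0$ to $\ell$ through (H3) on one side and to $\tilde\ell$ on the other, so that the integration absorbs exactly the factor produced by the outgoing hyperbolic modes.

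Carrying this through, in the Lax and overcompressive cases the origin zero of $D$ of multiplicity $\ell$ is removed entirely, so that (D)---no roots in $\{\mathrm{Re}~\lambda\geq 0\}$ save a multiplicity-$\ell$ root at the origin---becomes equivalent to $\tilde D$ being nonvanishing on all of $\{\mathrm{Re}~\lambda\geq 0\}$, i.e. $(\tilde D)$; in the undercompressive case the same low-frequency count instead leaves $\tilde D$ with a zero at the origin of order $1+|\tilde\ell|$, which is $(\tilde D')$. Since the off-origin zeros of $D$ and $\tilde D$ match by Step~1 and the origin behavior is pinned down by Step~2, the equivalence follows in both directions. The main obstacle is exactly this origin bookkeeping: away from $\lambda=0$ everything is a soft consequence of the solution correspondence, but at the degenerate point the limiting matrices $\mathcal{A}_\pm(0)$ acquire center directions, and one must use the transversality and low-frequency analysis of \cite{ZH,MaZ3} to determine, as a function of shock type, how many zeros the passage to integrated coordinates absorbs.
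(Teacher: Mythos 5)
Your proposal is correct and follows essentially the same route as the paper, which itself only records the off-origin solution correspondence between \eqref{firstorderE} and \eqref{inteval1} (integration of the divergence-form eigenvalue equation, with $\lambda\int V=0$ forcing decay of $\tilde V$ at $+\infty$ for $\lambda\neq 0$) and defers the multiplicity count at $\lambda=0$ to the cited low-frequency analysis of \cite{ZH,MaZ3}. Your additional bookkeeping of how the crossing hyperbolic modes alter the limiting stable/unstable subspaces at the origin, yielding removal of the order-$\ell$ zero in the Lax/overcompressive case and a zero of order $1+|\tilde\ell|$ in the undercompressive case, is consistent with that analysis and with the statement of $(\tilde D)$ versus $(\tilde D')$.
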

Note that the inclusion of term $|\tilde \ell|$ repairs an omission in
\cite{HLZ}, for which  $\tilde \ell\equiv 0$ in the undercompressive case.
Propositions \ref{intcond}, \ref{main2},
\ref{orbital} and \ref{Dcond}
give together a simple and readily numerically
evaluated test for stability of large-amplitude and or non-Lax-type
waves.

\subsection{Small-amplitude stability}\label{s:smallamp}
The following proposition gives a first 
nonlinear stability result for planar viscoelastic shocks,
answering a conjecture posed in \cite{AM} for the shear wave case.

\begin{proposition}[\cite{HuZ}]\label{smallstab}
Assume (H0). Let $V_0$ be a point of strict hyperbolicity of $DG$ and let
$\lambda_0$ be one of its eigenvalues, associated with a genuinely
nonlinear characteristic field. Then there exists $\epsilon>0$
sufficiently small such that for any viscous shock $\bar V$ with speed
$s$ satisfying:
$$\|\bar V-V_0\|_{L^\infty}<\epsilon \quad
\mbox{ and } \quad |s-\lambda_0|<\epsilon,$$
we have:

{\rm{(i)}} the shock is of Lax type, 

{\rm{(ii)}} the Evans condition (D) holds, hence $\bar V$  is
linearly and nonlinearly phase-asympto\-ti\-cal\-ly orbitally stable.
\end{proposition}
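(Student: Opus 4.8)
The plan is to derive both claims from the general small-amplitude stability theory of \cite{HuZ}, whose hypotheses we have already arranged: near $V_0$ the system is strictly hyperbolic, $B_{0,i}$ is uniformly elliptic, and conditions (A1)--(A3) and (H0) hold, as verified in Section~\ref{nstab}. The mechanism is a \emph{weakly nonlinear reduction}: a small shock associated with a genuinely nonlinear simple field is, to leading order, a viscous Burgers shock, whose stability is classical, and one transfers this stability to the full system through the Evans function. Accordingly I split the argument into a local bifurcation analysis fixing the shock type (claim (i)) and an Evans-function comparison to the scalar model (claim (ii)).

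For (i), strict hyperbolicity makes $\lambda_0$ a simple eigenvalue of $DG(V_0)$ with smooth right eigenvector $r_0$ and a characteristic speed $\lambda_0(V)$ defined nearby; genuine nonlinearity means $\nabla_V\lambda_0(V_0)\cdot r_0\ne 0$. By Lax's construction the states connectable to $V_0$ in this family form, locally, a single smooth Hugoniot curve $\epsilon\mapsto V_+(\epsilon)$ with speed $s(\epsilon)=\lambda_0(V_0)+O(\epsilon)$, along which the strict Lax inequalities $\lambda_0(V_-)>s>\lambda_0(V_+)$ hold for the shock branch of the amplitude $\epsilon$. Since the remaining eigenvalues of $DG(V_0)$ are bounded away from $\lambda_0$, any $\bar V$ with $\|\bar V-V_0\|_{L^\infty}$ and $|s-\lambda_0|$ small must lie on this curve; the standard counting of characteristics incoming to the shock, relative to $s$, then gives $\tilde\ell=1$, so the shock is of Lax type, and the strictness of the Lax inequalities yields noncharacteristicity (H2).

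For (ii), I rescale to the weakly nonlinear regime. Writing $\bar V(z)=V_0+\epsilon\,p(\epsilon z)+o(\epsilon)$ and projecting the traveling-wave ODE onto the $\lambda_0$-eigenspace, the leading profile $p$ solves a scalar Burgers traveling-wave equation and is the standard monotone (hyperbolic-tangent) profile. I then track the Evans function under the parabolic rescaling $\lambda\mapsto\lambda/\epsilon^2$: using the conjugation lemma of Section~\ref{s:stabtheory} to freeze the asymptotic stable/unstable subspaces, I split the phase space into the one-dimensional ``slow'' $\lambda_0$-direction and the complementary ``fast'' directions. The fast block is spectrally separated from $s$ and contributes an analytic factor uniformly nonvanishing on $\{Re\,\lambda\ge0\}$, while the slow block yields a factor converging, uniformly on compact sets in the rescaled frequency, to the Burgers Evans function $D_B$. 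As $D_B$ is explicitly nonvanishing on $\{Re\,\lambda\ge0\}$ except for a simple zero at the origin, for $\epsilon$ small the full Evans function $D$ has its only nonstable zero at $\lambda=0$, of multiplicity $\ell=1$, which is precisely condition (D). Linearized and nonlinear phase-asymptotic orbital stability then follow from Propositions~\ref{Dcond} and~\ref{orbital} together with Lemma~\ref{DH} (here $\tilde\ell=\ell=1$).

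The main obstacle is controlling $D$ uniformly over \emph{all} of $\{Re\,\lambda\ge0\}$ rather than only on compact sets. The convergence to $D_B$ governs bounded rescaled frequencies, but one must separately exclude zeros at high frequency, uniformly in the small parameter. For this I would use symmetrizability and the uniform ellipticity of $B_{0,i}$ from (A1)--(A3) to establish a Kawashima-type energy estimate ruling out eigenvalues with $|\lambda|$ large and $Re\,\lambda\ge0$. Reconciling the compact-frequency (Burgers-limit) and high-frequency (energy-estimate) regimes, whose natural rescalings are incompatible, is the delicate point, and is exactly what the small-amplitude theory of \cite{HuZ} supplies.
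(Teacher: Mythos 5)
The paper gives no proof of this proposition: it is imported wholesale from \cite{HuZ}, the point of Section \ref{nstab} being precisely that the verification of (A1)--(A3), (H0) for the viscoelastic systems places them in the class to which that reference applies. Your sketch is a plausible and essentially correct outline of a proof, but it is not the argument of \cite{HuZ}. Humpherys and Zumbrun establish spectral stability of small-amplitude profiles by \emph{energy estimates} on the integrated eigenvalue equation: a Goodman-type weighted estimate exploiting genuine nonlinearity and the monotonicity of the leading (Burgers-like) component of the profile in the low/medium frequency regime, combined with Kawashima-type compensating-matrix estimates at high frequency, all carried out uniformly in the amplitude parameter. This route never forms the Evans function at all, and in particular it sidesteps the difficulty you correctly single out as the crux of your approach --- reconciling the parabolically rescaled (Burgers-limit) regime with the unrescaled high-frequency regime. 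The Evans-function/Burgers-limit argument you describe is the alternative proof due to Freist\"uhler--Szmolyan and Plaza--Zumbrun; it is genuinely more delicate (one must prove convergence of the rescaled Evans function uniformly on compact sets \emph{and} separately exclude large $|\lambda|$, then show the two regimes overlap), but it yields finer information, namely the location of the low-frequency zeros and hence the transfer of the scalar Burgers index to the system. Your part (i) is standard Lax/Majda--Pego bifurcation analysis and is consistent with the classification of Section \ref{s:type}; the one point to make explicit is that the hypothesis already grants existence of the viscous profile, which selects the compressive half of the Hugoniot branch and hence the strict Lax inequalities and (H2). As a blind reconstruction your proposal is acceptable, but be aware that you have reproved the result by a different (and harder) method than the one the paper actually relies on.
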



\section{Existence of viscous shock profiles}\label{s:5}
Let us now seek traveling waves connecting given endstates:
$$V_- = V(-\infty) = (\alpha,0), \qquad V_+=V(+\infty)=(a_+, b_+).$$
Indeed, by invariance of (\ref{visco_eq}) under change in coordinate
frame $\xi\mapsto \xi + b_0 t$, we may without loss of generality
assume that $b(-\infty)=0$.

Hereafter we restrict to the simpler (and apparently more physical)
case of viscosity tensor $\mathcal{Z}_2$.
The case $\mathcal{Z}_1$ may be treated similarly.
We note that the type and location of equilibria of the traveling wave
ODE under are assumptions are independent of the choice of $\mathcal{Z}$,
by the general results of \cite{MaZ3}; see \cite{BLZ} for further
discussion in the somewhat similar context of MHD.

\medskip

Writing the profile equation for \eqref{hyppar} with (\ref{part}) and
(\ref{Zs}),  we obtain:
\begin{equation}\label{5.0}
-sa'-b'=0,\qquad
-sb'-DW_0(a)'=\Big(\frac{(b_1', b_2', 2b_3')}{a_3}\Big)'.
\end{equation}
Note that $s\neq 0$ for profiles satisfying the nonlinear stability
conditions (namely, the endstates belonging to the strict
hyperbolicity region of $DG$). For otherwise $b'=0$ and $DW_0(a)'=0$
hence $M(a) a'=0$ along the profile, contradicting the invertibility
of $M$ in the neighborhood of $a(-\infty)$.

Now, substituting the first equation into the second, making the
change of variable $z\mapsto sz$, and
defining $\sigma= s^2$, we get the following {\it reduced profile equation}:
\ba\label{redprof}
-\sigma a'+ DW_0(a)'&=\Big(\frac{(a_1', a_2', 2a_3')}{a_3}\Big)',\\
\ea
recognized as associated with the
strictly parabolic gradient flux system in $a$ alone:
\begin{equation}\label{sys}
a_t+ DW_0(a)_z=\Big(\frac{(a_1, a_2, 2a_3)_z}{a_3}\Big)_z.
\end{equation}
Note that $\eta(a)=\frac{|a|^2}{2}$ is the convex entropy for
(\ref{redprof}) as: 
$$\nabla\eta(a) \cdot D^2_a W(a) = \nabla q(a), \qquad q(a) = a\cdot
D_aW(a) - W(a).$$

Evidently, \eqref{redprof} may be written as a generalized gradient flow:
\be\label{gradflowU}
\frac{(a_1', a_2', 2a_3')}{a_3}=
\nabla_a \phi(a),
\qquad
\phi(a)=W_0(a)-\sigma \frac{|a|^2}{2} -(DW_0(\alpha)-\sigma
\alpha)\cdot a,
\ee
where $\alpha=a(-\infty)$. Making the change of variable
$z\mapsto\tilde z(z)$ where $\tilde z$ solves the ODE: 
$\tilde z'(z) = 1/a_3(\tilde z(z))$, the
system (\ref{gradflowU}) becomes:
\begin{equation}\label{5.45}
(a_1', a_2', 2a_3') = \nabla_a \phi(a).
\end{equation}

We see that the function $z\mapsto \phi(a(z))$ is non-decreasing:
\begin{equation}\label{increase}
(\phi\circ a)' = (\nabla\phi)a' = a_3~
\mbox{diag}\left(1,1,\frac{1}{2}\right)
\nabla\phi(a)\otimes\nabla\phi\geq 0
\end{equation}
in the admissible region  $a_3>0$.
This is a simple instance of a more general fact
concerning parabolic conservation laws
possessing a viscosity-compatible strictly convex entropy \cite{G,CS1,CS2,BLZ}. 
Moreover, the type of the shock connection of the original viscoelasticity
equations is the same as the type for the reduced equations \eqref{sys},
which is in turn determined by the relative Morse index of the 
endstates/equilibria considered as critical points $a$:
\begin{equation*}\label{critpt}
DW_0(a)-\sigma a- (DW_0(\alpha)-\sigma \alpha)=0
\end{equation*}
of $\phi$. See also the general results and discussion of
\cite{MaZ3,BLZ}.

\smallskip

Finally, a straightforward calculation shows that:
\begin{equation}\label{h1}
\begin{split}
s\phi(a) = &s\eta(V) - \big(q(V) + \zeta\big) + \nabla q(V) \cdot
\Big(G(V) - G(V-) - s(V - V_-)\Big)\\
&\mbox{for } V=(a,b) \mbox{ with } b=-s(a-\alpha),
\end{split}
\end{equation}
where the inviscid flux $G$, entropy $\eta$ and entropy flux $q$ are
as in (\ref{dg}) and (\ref{en_flu}).
The relation $b=-s(a-\alpha)$ is valid along the profile, and it
follows by integrating the first equation in (\ref{5.0}) from
$-\infty$ to $z$. The vector $\zeta=sDW(\alpha)\alpha - \frac{1}{2}s^3|\alpha|^2$,
which is independent of $V$, can be seen as an adjustment of the
entropy flux $q$, naturally defined up to a constant.

The quantity in the right hand side of (\ref{h1}) is related to the
dissipative quantity:
$$\psi(V) = -s\eta(V) + q(V)$$
which decreases across any viscous profile connection lying within the
region of strict hyperbolicity of the reference hyperbolic-parabolic
system and the region of strict convexity of its entropy $\eta$
(see\cite{BLZ} and references therein):
$$\psi(V_+) - \psi(V_-) < 0.$$
Indeed, by (\ref{h1}) and the Rankine-Hugoniot relations, it follows
that:
$$\psi(V_-) = -s\phi(\alpha) ~~~~\mbox{ and } ~~~~\psi(V_+) = -s\phi(a_+).$$
Thus, in view of (\ref{increase}), we conclude that in the present
setting $\psi$ is decreases across any viscous profile with positive
speed $s>0$, even one passing the elliptic region. This clarifies
somewhat the role of $\phi$ in the original system.

\subsection{The 3D compressible system}

Recalling (\ref{w0}), (\ref{5.45}) becomes:
\ba\label{compode}
{a_1'}&= (|a|^2-\sigma)  a_1 -(|\alpha|^2-\sigma)  \alpha_1 ,\\
{a_2'}&= (|a|^2-\sigma)  a_2 -(|\alpha|^2-\sigma)  \alpha_2 ,\\
{2 a_3'}&= (|a|^2-1-\sigma)a_3- (|\alpha|^2-1-\sigma)\alpha_3.
\ea
As $|a|\to \infty$, $\phi(a)\sim \frac{|a|^4}{4}$, hence the phase
portrait of (\ref{gradflowU}) always possesses a minimum, or repellor.
More, $\nabla \phi(a)\sim |a|^2 a$ points in the outward radial
direction, and hence the index of this vector field
on a suitably large ball is $+1$, and it must be equal to the sum of the indices
of the equilibria (generically five - see Section \ref{s:port3D} and
Figure \ref{compressible_phase_portrait_1}), defined as the
signs of the associated Jacobians
$\sgn \det(D^2W_0-\sigma\Id).$
The same argument shows that a sufficiently large
ball is absorbing in backwards $z$, so that we can conclude
that any orbit lying in the  stable manifold of an equilibrium 
must connect in backward $z$ to some other equilibrium possessing
an unstable manifold.

Further, when $a_3=0$ we have $\partial_3\phi = -
(|\alpha|^2-1-s^2)\alpha_3$, which is independent of $(a_1,a_2)$.
Since $\nabla\phi\sim |a|^2a$ as $|a|\to +\infty$, it follows that the
index of $\nabla\phi$ on a large half-ball: $B_R(0)\cap\{a_3>0\}$
equals $+1$ for $(|\alpha|^2-1-\sigma)\alpha_3>0$,  and it equals $0$ for
$(|\alpha|^2-1-\sigma)\alpha_3<0$.
In the former case, the region $B_R(0)\cap\{a_3>0\}$ is invariant in backward
$z$ and so we may conclude that
any orbit lying in the  stable manifold of an equilibrium 
in $\{a_3>0\}$ must connect in backward $z$ to some other equilibrium 
in $\{a_3>0\}$ possessing an unstable manifold. 


\subsection{The 2D incompressible shear case}
The incompressible case can be analyzed similarly as above, with (\ref{sys}) becoming: 
$$ a_t+ DW_0(a)_z=a_{zz},$$
where $a=(a_1, a_2)$ and  $W_0(a)=\frac{1}{4}|a|^4 + \frac{1}{2}|a|^2$.
That gives a $2\times2$ rotationally symmetric model:
\ba\label{shearfirstorderprof}
a_{t} +( (|a|^2+1)a)_z= a_{zz},
\ea
of a form $a_t + (h(|a|)a)_z=a_{zz}$ that has been
much studied as a prototypical example of a system
with rotational degeneracy \cite{F}.
Further, the counterpart of (\ref{gradflowU}) reads:
$$ 
a'= \nabla_a \phi(a),
\qquad
\phi(a)=W_0(a)-\sigma \frac{|a|^2}{2}
-(DW_0(\alpha)-\sigma \alpha)\cdot a. $$
Expanded in coordinate form, the profile ODE reads:
\begin{equation*}\label{shearode}
\begin{split}
a_1'&= (|a|^2+1 -\sigma)  a_1 -(|\alpha|^2+1 -\sigma)  \alpha_1 ,\\
a_2'&= (|a|^2+1 -\sigma)  a_2 -(|\alpha|^2+1 -\sigma)  \alpha_2 .\\
\end{split}
\end{equation*}
Again, we find by an asymptotic development of $\phi$ that
the vector index of this ODE on a suitably large ball is $+1$,
and there exists always at least one repellor, with index $+1$.
In the generic case (see below), there are three nondegenerate
equilibria, each of index $\pm 1$: one is of index $+1$ 
and of one index $-1$.




Writing $\alpha=a(-\infty)$ and $a_+=a(+\infty)$, 
the Rankine--Hugoniot relations for (\ref{shearfirstorderprof}) are:
\be\label{rh}
(|a_+|^2+1-\sigma)a_+ - (|\alpha|^2 +1 -\sigma)\alpha = 0.
\ee
By rotation invariance, we may restrict our attention to $\alpha=(\alpha_1,0)$.
We shall distinguish two cases.

\medskip

{\bf Case (i) $\mathbf{\alpha_1= 0}$.}  We find that
there is a circle of solutions $a_+$ to (\ref{rh}), given by $|a_+|^2=\sigma-1$,
surrounding the rest state $a_+=0$ at the center.
Along each radius of the circle, there is a viscous shock connection
$a(t,z) = \rho(z-\sigma t)e^{i\theta}$ solution to (\ref{shearfirstorderprof})
whose norm $\rho=|a|$ satisfies:
\be\label{rho}
\rho_t + (\rho^3+\rho)_z=\rho_{zz}
\ee
and connects $\rho_+= \rho(+\infty) = \sqrt{\sigma -1}$ 
to $\rho_-=\rho(-\infty) = 0$.
Note that (\ref{rho}) is also the associated parabolic equation to the
flow:
\begin{equation*}
a_1'=(a_1^2+1 -\sigma)a_1-(\alpha^2+1-\sigma)\alpha,
\end{equation*}
which is the counterpart of (\ref{gradflowU}) for the 1d
incompressible model (\ref{shearfirstorder1D}).
When $\alpha_1=a_1(-\infty) = 0$ then $a_{1+}=a_1(+\infty)=\sqrt{\sigma
  -1}$ and thus we obtain:
$$ a_1'=(a_{1}^2 - \alpha_{1}^2)a_1,$$
which has the explicit solution:
\begin{equation}\label{explicitsoln}
a_1(z)=\frac{\alpha_1 \exp(-\alpha_1^2 z)}{\sqrt{k+\exp(-2\alpha_1^2z)}}
\end{equation}
with $k>0$.
Note that this solution connects $a(-\infty)= \sqrt{\sigma -1}$ 
to $a(+\infty)=0$; that is, the connection goes in opposite direction from
the one sought.
  Setting now $a(t,z) = a_1(z-\sigma t)e^{i\theta}$ (with
constant rotation angle $\theta$) gives the traveling viscous shock
solution to (\ref{shearfirstorderprof}).

\br\label{circlermk}
Though noncharacteristic when considered as one-dimensional solutions,
as reflected by uniform exponential convergence to their endstates
(see discussion below Theorem \ref{main2}),
such shocks are always characteristic with respect to the transverse
(rotational) modes, which have characteristic speeds $\pm \sqrt{|a|^2+1}$
equal to $\pm \sqrt \sigma=\pm s$.
\er

\medskip

{\bf Case (ii) $\mathbf{\alpha_1\neq 0}$.}
When $a_{2+}=0$ then (\ref{rh}) reduces to $(a_{1+}^2 +
1-\sigma)a_{1+} = (\alpha_1^2 +1-\sigma)\alpha_1$ corresponding to the
associated scalar equation (\ref{rho}). 

When $a_{2+}\neq0$ then the second equation in (\ref{rh}) becomes
$a_{1+}^2 + a_{2+}^2 = |a_+|^2 = \sigma-1$ whence, from the first
equation: $\alpha_1^2 = |\alpha|^2 = \sigma-1$. That is, solutions
with $a_{2+}\neq0$ exist only if $\sigma$ is equal to the linearly
degenerate characteristic speed, which has no profile.

Thus we may without loss of generality restrict to the (at most)
triples of possible rest states $(\alpha_1^{(i)},0)$ with (to fix the
ideas):
$$ \alpha_1^{(3)} < 0 < \alpha_1^{(2)} < \alpha_1^{(1)}$$
and $((\alpha_1^{(i)})^2 +1-\sigma) \alpha_1^{(i)}<0$ sufficiently
small. Considering the equation (\ref{rho}), we find that the
outermost rest points  $(\alpha_1^{(3)},0)$ and  $(\alpha_1^{(1)},0)$
are connected to the innermost  $(\alpha_1^{(2)},0)$ by a scalar (1d)
shock profile. Indeed:
\begin{itemize}
\item $ (\alpha_1^{(1)})^2 + 1-\sigma <0< 3 (\alpha_1^{(1)})^2+1-\sigma$,
so $(\alpha_1^{(1)},0)$ is a saddle,
\item $(\alpha_1^{(2)})^2 + 1-\sigma < 3 (\alpha_1^{(2)})^2+1-\sigma <0$,
so $(\alpha_1^{(2)},0)$ is an attractor,
\item $ 3(\alpha_1^{(3)})^2 + 1-\sigma > (\alpha_1^{(3)})^2+1-\sigma>0$,
so $(\alpha_1^{(3)},0)$ is a repellor.
\end{itemize}
The phase portrait thus consists of a family of overcompressive
profiles connecting $(\alpha_1^{(3)},0)$ and $(\alpha_1^{(2)},0)$,
bounded by Lax shocks between $(\alpha_1^{(3)},0)$ and
$(\alpha_1^{(1)},0)$, and between $(\alpha_1^{(1)},0)$ and $(\alpha_1^{(2)},0)$,
similarly as for the closely
related ``cubic model'' studied, e.g., in \cite{F,Br}.
See Figure \ref{shear_phase_portrait_1} for typical phase
portraits computed numerically using MATLAB.

\begin{center}
\begin{figure}[htbp] 
$\begin{array}{lr}
(a) \includegraphics[scale=.4]{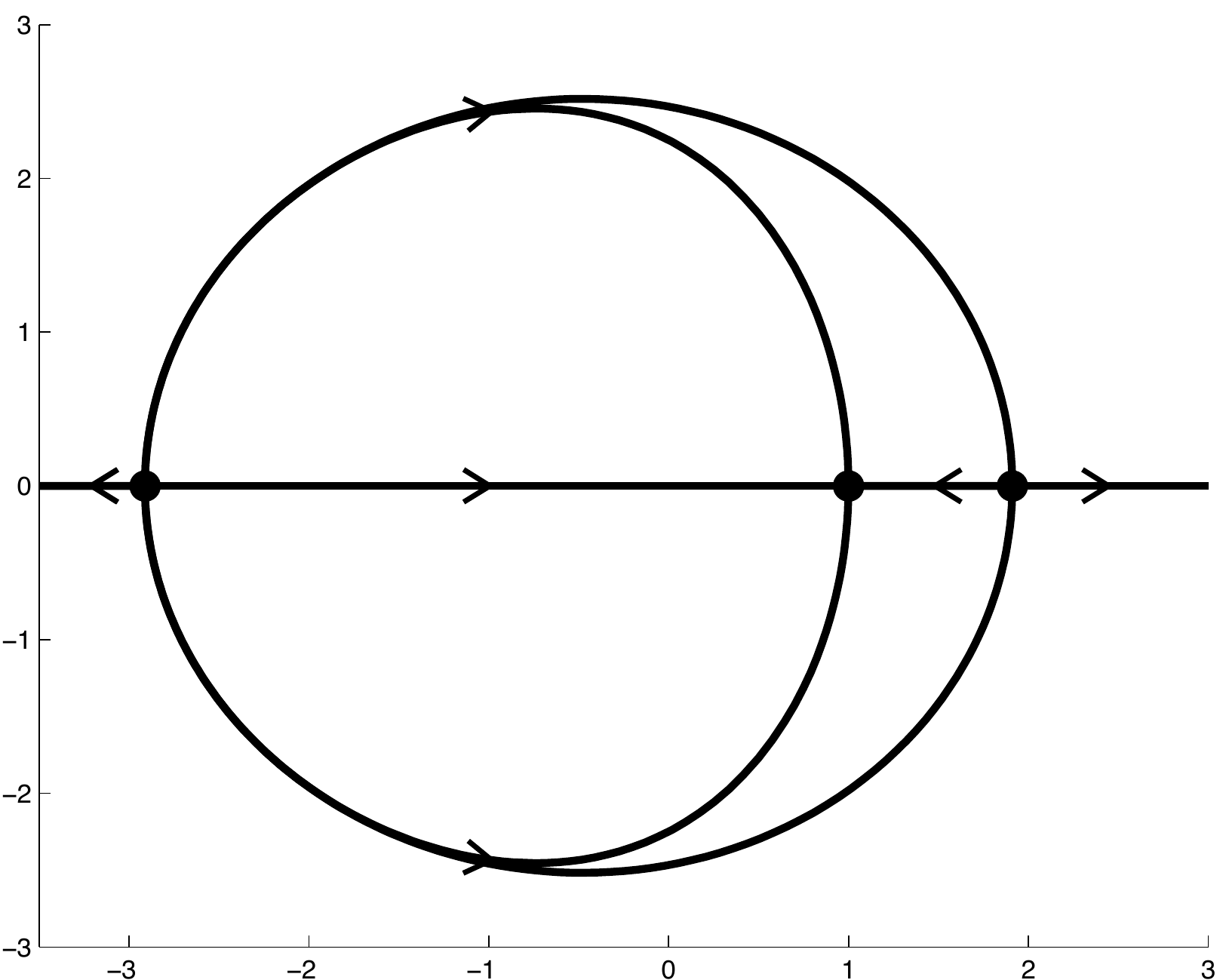}&(b) \includegraphics[scale=.4]{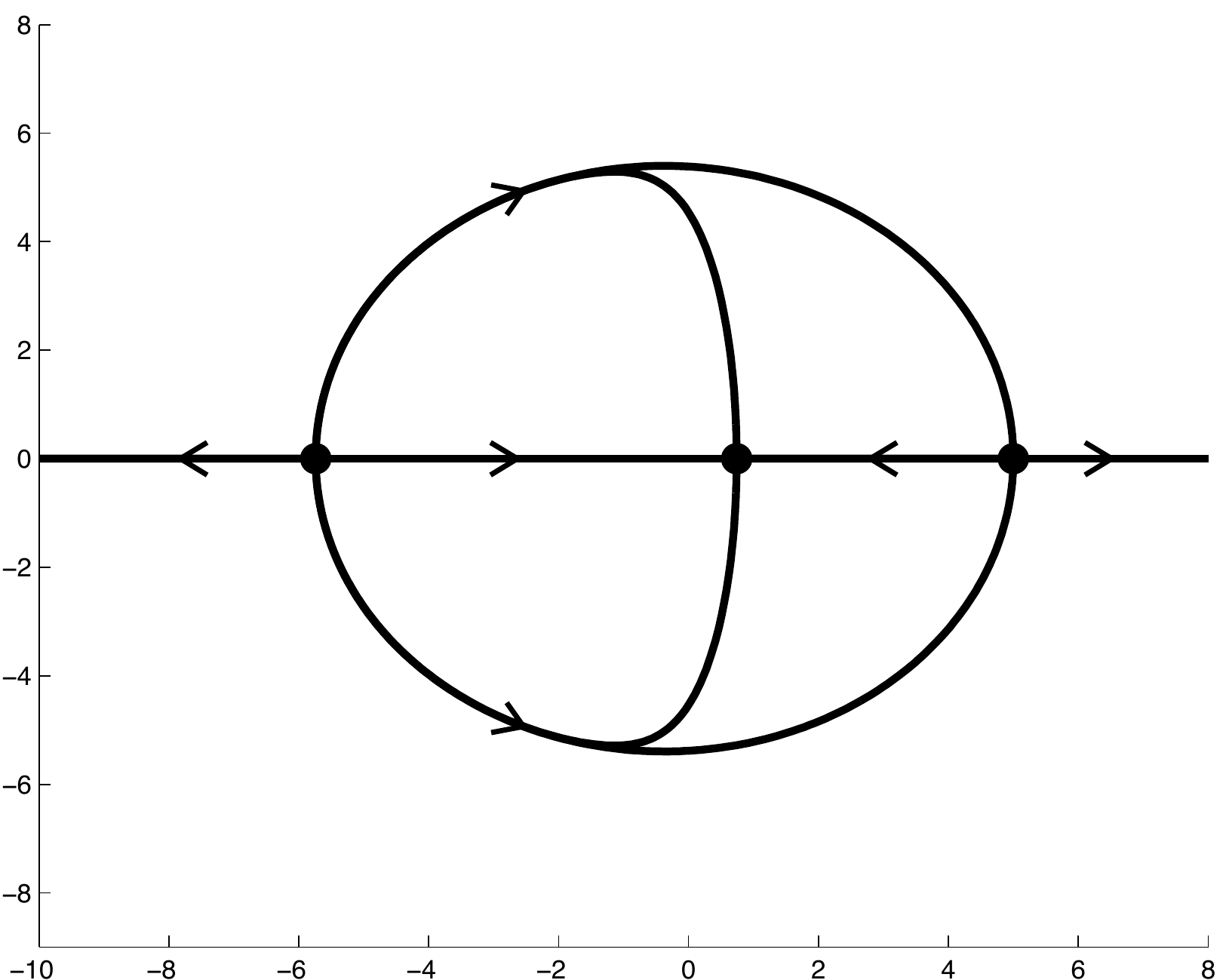}
 \end{array}$
 \caption{Typical phase portrait for the 2D shear case. In (a) we
   have $\alpha_1=1$ and $\sigma=11/4$, and in (b) we have $\alpha_1=5$ and $\sigma=121/4$.} \label{shear_phase_portrait_1}
\end{figure}
\end{center}

\subsection{The 2D compressible case}\label{s:port2D}
Here (\ref{sys}) simplifies to:
\ba\label{eqbfirstorder2}
a_{2,t} +  (|a|^2 a_2)_z &= \left(\frac{a_{2,z}}{a_3}\right)_z,\\
a_{3,t} +  ((|a|^2-1)a_3)_z &= 2\left(\frac{a_{3,z}}{a_3}\right)_z,
\ea
where $a=(a_2, a_3)$ and 
Rankine--Hugoniot relations are:
\ba\label{rhcomp2D}
(|a_+|^2-\sigma) a_{2+} -  (|\alpha|^2-\sigma) \alpha_2 &= 0\\
(|a_+|^2-1-\sigma) a_{3+} -  (|\alpha|^2-1-\sigma) \alpha_3 &= 0,
\ea
where $\alpha=(\alpha_2, \alpha_3)=a(-\infty)$, $a_+=(a_{2+}, a_{3+})=a(+\infty)$
and $a_{3+}, \alpha_3>0$.
We shall distinguish two cases.

\medskip

{\bf Case (i) $\mathbf{\alpha_2=0}$.} 
Strict hyperbolicity of (\ref{DG23}) enforces that
$\alpha_3>1/\sqrt{3}$ and $\alpha_3\neq 1/\sqrt{2}$.
When $a_{2+}=0$ then (\ref{rhcomp2D}) implies that:
$$a_{3+}= -\frac{1}{2}\alpha_3\pm\frac{1}{2}\sqrt{4(1+\sigma) - 3\alpha_3^2},$$
with at most one physically feasible solution $a_{3+}>0$.
For every $\alpha_3$ the range of $\sigma$, for which $a_{3+}>1/\sqrt{3}$ 
and $a_{3+}\neq 1/\sqrt{2}$ is:
$$\sigma \in (\alpha_3^2 + \frac{1}{\sqrt{3}}\alpha_3 - \frac{2}{3},
+\infty) \setminus\{\alpha_3^2 + \frac{1}{\sqrt{2}}\alpha_3 - \frac{1}{2}\}.$$
An associated 1d traveling wave of the type $(0,a_3(z))$ must satisfy:
$$-\sigma a_3' + (a_3^3 - a_3)' = 2\left(\frac{a_3'}{a_3}\right)'.$$

If $a_{2+}\neq 0$, then the first equation in (\ref{rhcomp2D}) implies
$|a_+|^2=\sigma$, while by the second equation:
$a_{3+}=  (1+\sigma-\alpha_3^2 )\alpha_3$, hence:
$$a_{2+}=\pm\sqrt{\sigma -(1+\sigma-\alpha_3^2)^2\alpha_3^2}.$$
We see that there is a pattern of at most four physically feasible equilibria, 
corresponding to two 1d solutions plus two more symmetrically 
disposed about the $a_3$ axis,
There are at most five equilibria in total, counting a fifth possible
infeasible radial solution with $a_{3}<0$.
Here, we are ignoring the line of nonphysical equilibria $a_3=0$
induced by the form of the viscosity tensor.
See Figures \ref{compressible_phase_portrait_1} (a) (c)
for a typical phase portrait computed numerically using MATLAB.

\begin{center}
\begin{figure}[htbp] 
$\begin{array}{lr}
 (a) \includegraphics[scale=.4]{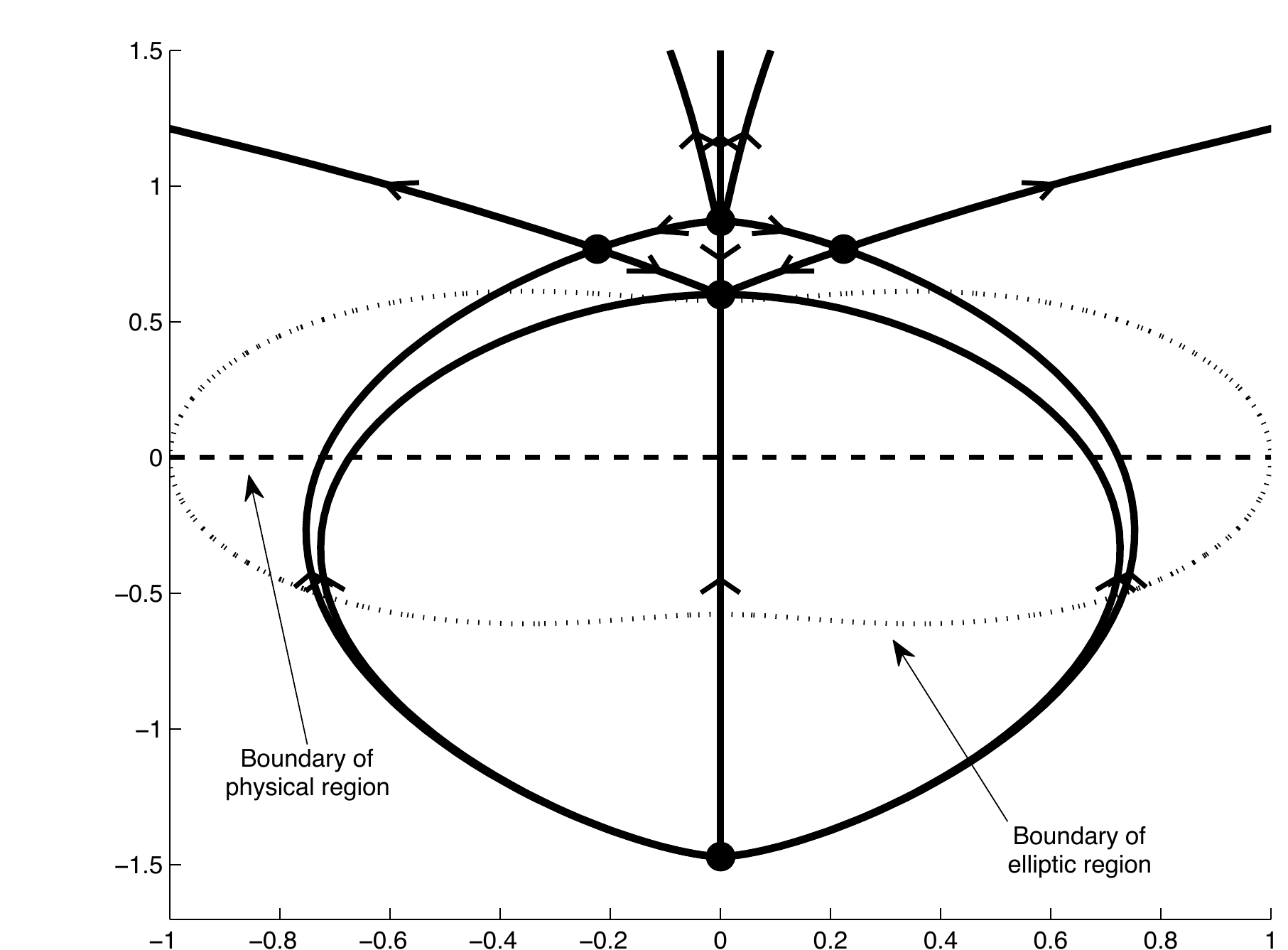}& (b) \includegraphics[scale=.4]{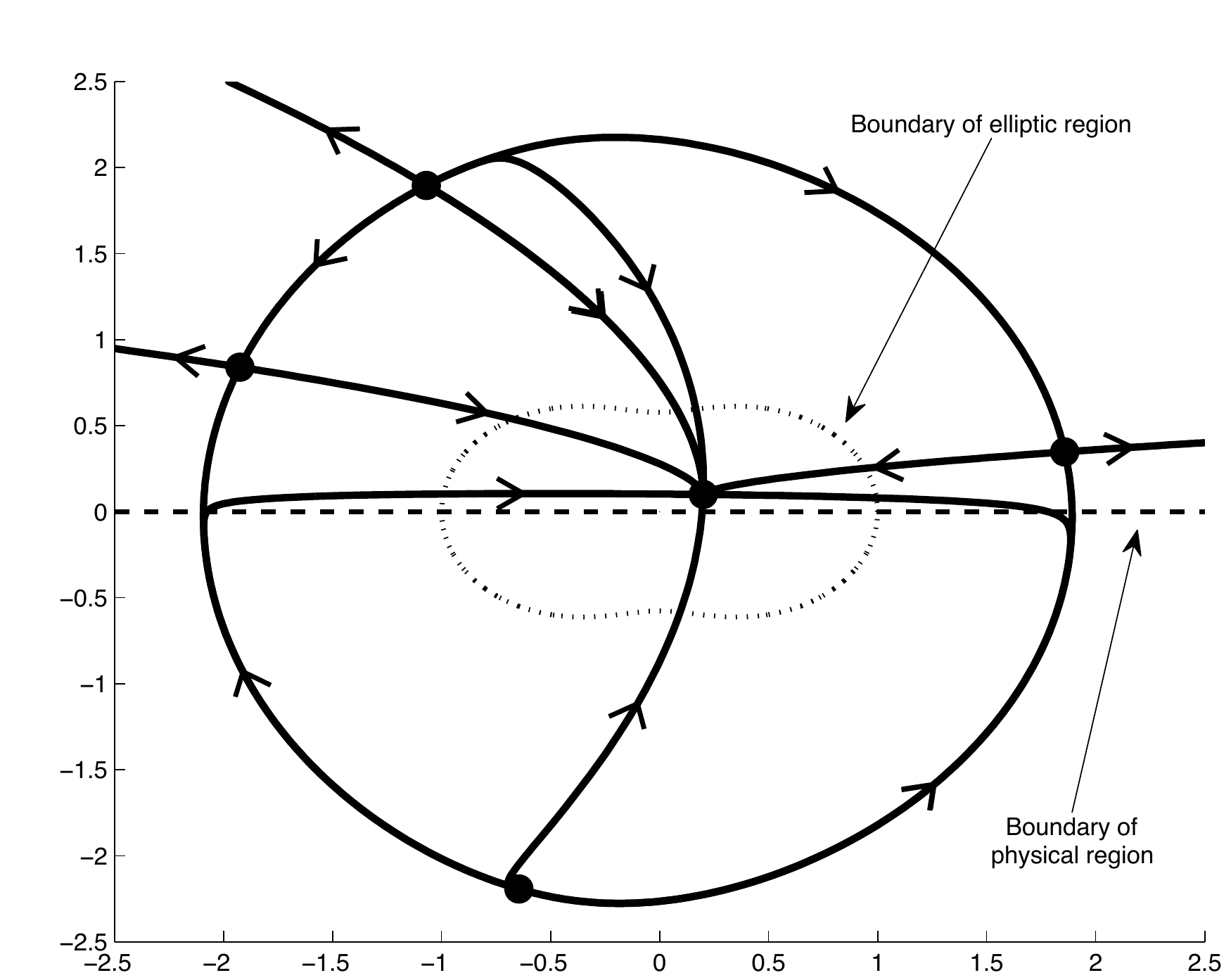}\\
 (c) \includegraphics[scale=.4]{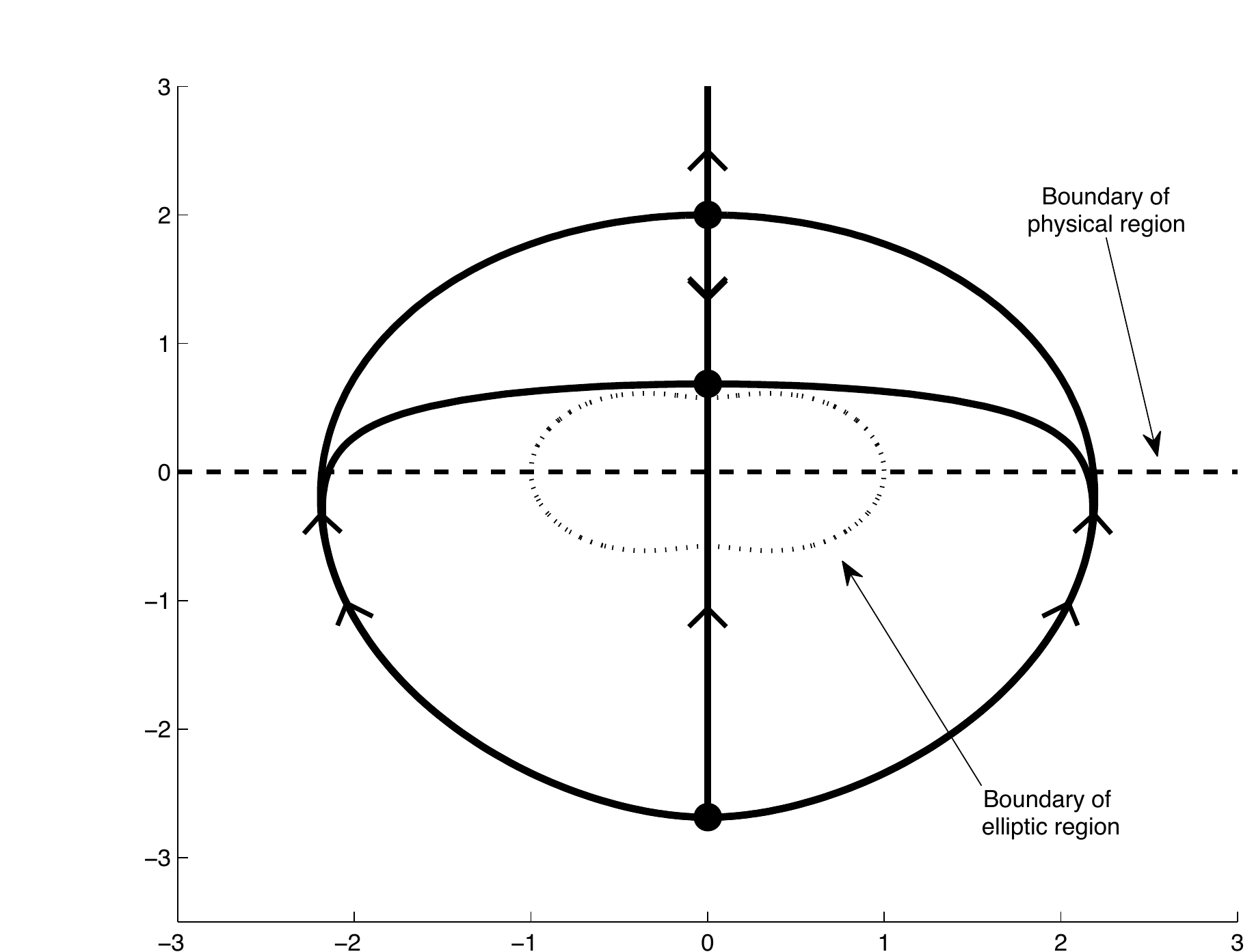}& (d) \includegraphics[scale=.4]{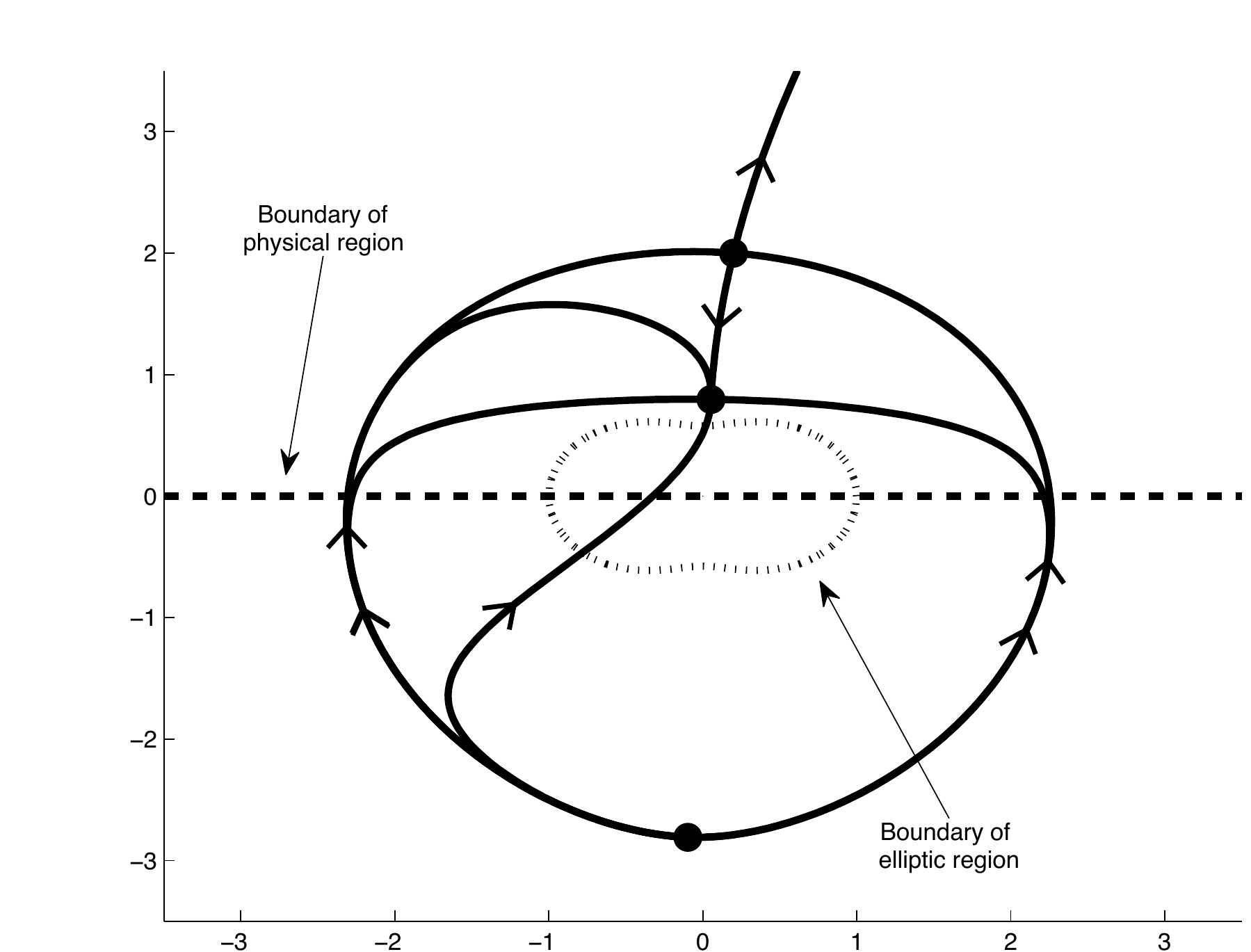}
\end{array}$
\caption{Typical three and five-equilibrium phase portraits for the
  2D compressible case. The dark dashed lines bound the physically
  relevant region $a_3>0$ and the light dashed lines surround  the
  region $m_2<0$  where the system (\ref{bfirstorder2D}) loses hyperbolicity, see Section \ref{333}.  In (a) we have $\alpha_2=0$, $\alpha_3=0.6$, and $\sigma=0.64$, in (b) $\alpha_2=0.2$, $\alpha_3=0.1$, and $\sigma=4$, in (c)  $\alpha_2=0$, $\alpha_3=2$, $\sigma=4.84$, and in (d) $\alpha_2=0.2$, $\alpha_3=2$, $\sigma=5.29$. }
\label{compressible_phase_portrait_1}
\end{figure}
\end{center}


\medskip

{\bf Case (ii) $\mathbf{\alpha_2\ne 0}$.}
Setting $x=|a_+|^2 - |\alpha|^2$ we see that \eqref{rhcomp2D} is
solved by:
$$ a_{2+}  = \frac{(|\alpha|^2-\sigma)}{ (x + |\alpha|^2-\sigma)}\alpha_2,
\qquad
a_{3+}   = \frac{(|\alpha|^2-1-\sigma)}{ (x + |\alpha|^2-1
  -\sigma)}\alpha_3. $$
Substituting into the definition of $x$ and rearranging we obtain:
\be\label{quint}
\begin{split}
(x+|\alpha|^2)(x+|\alpha|^2-\sigma)^2(x+|\alpha|^2-1-\sigma)^2
=&(|\alpha|^2-\sigma)^2(x+|\alpha|^2-1-\sigma)^2\alpha_2^2\\
&+(x+|\alpha|^2-\sigma)^2(|\alpha|^2-1-\sigma)^2\alpha_3^2,
\end{split}
\ee
yielding the quintic:
\be\label{yquint}
y(y-\sigma)^2(y-1-\sigma)^2=(|\alpha|^2-\sigma)^2(y-1-\sigma)^2\alpha_2^2
+(y-\sigma)^2(|\alpha|^2-1-\sigma)^2\alpha_3^2,
\ee
where $y=|a_+|^2$. From the roots of (\ref{yquint}) $a_+$ may be recovered through:
\ba\label{yrearranged}
a_{2+}   &= \frac{|\alpha|^2-\sigma}{y-\sigma}\alpha_2,
\qquad
a_{3+}   = \frac{|\alpha|^2-1-\sigma}{y-1 -\sigma}\alpha_3.\\
\ea
Evidently, these solutions are not 1d, as
$a_{2+}/a_{3+}\ne \alpha_2/\alpha_3$, unless $y=|\alpha|^2$,
in which case other roots of (\ref{yquint}) are:
$ \sigma$, $1+\sigma$, contradicting (\ref{yrearranged}). 

Recall that the nonphysical solutions with $a_{3+}\le 0$ are discarded and
that the further condition of
hyperbolicity of endstates is not necessary for existence of
profiles, but is needed to apply the basic stability framework
of Section \ref{nstab}.
We shall discuss profiles with nonhyperbolic endstates in Appendix \ref{s:complex}.
See Figures \ref{compressible_phase_portrait_1} (b) (d)
for a typical phase portrait computed numerically using MATLAB.


\subsection{The 3D compressible case - continued}\label{s:port3D}

In the full 3D compressible case, (\ref{sys}) can be written as:
\begin{equation*}
\begin{split}
\tilde a_{t} +  (|a|^2 \tilde a)_z &= \left(\frac{\tilde a_{z}}{a_3}\right)_z,\\
a_{3,t} +  ((|a|^2-1)a_3)_z &= 2\left(\frac{a_{3,z}}{a_3}\right)_z,
\end{split}
\end{equation*}
with $a = (\tilde a, a_3)$ and $\tilde a=(a_1,a_2)$.
The corresponding Rankine--Hugoniot relations read:
\ba\label{rhcomp3D}
(|a_+|^2-\sigma) \tilde a_+ -  (|\alpha|^2-\sigma) \tilde \alpha &= 0\\
(|a_+|^2-1-\sigma) a_{3+} -  (|\alpha|^2-1-\sigma) \alpha_3 &= 0
\ea
where $\alpha = (\tilde\alpha, \alpha_3)=a(-\infty)$ and $a_+ =
(\tilde a_+, a_{3+}) = a(+\infty)$ and $a_{3+}, \alpha_3>0$.
Also, by invariance with respect to rotations in the
$\tilde a$ plane, we will assume that $\alpha_1=0$, without loss of generality.

\medskip

{\bf Case (i) $\mathbf{\alpha_2=0}$.}
Here, the phase portrait can be easily deduced from that in 
(i) Section \ref{s:port2D} of the 2d compressible case.
That is, there is at most one physically feasible 1d
profile connecting to rest
point $a_+=(0,0,(-\alpha_3 + \sqrt{4(1+\sigma) - 3\alpha_3^2})/2)$, 
and a ring of rest points
$a_+=(r\cos \theta,r\sin \theta,a_{3+})$ with
$a_{3+}=  (1+\sigma-\alpha_3^2 )\alpha_3$ 
and $r=\pm\sqrt{\sigma -a_{3+}^2}$.
Again, we are ignoring possible equilibria in the nonphysical plane
$a_3=0$.

\medskip

{\bf Case (ii) $\mathbf{\alpha_2 \ne 0}$.}
This case includes the 2d portrait of case (ii) in Section
\ref{s:port2D} for the 2D compressible case when $a_1\equiv 0$.

If $a_{1+}\ne 0$, then $|a_+|^2=\sigma$,
and so $|\alpha_{2}|^2=\sigma$ (in view of the second equation in (\ref{rhcomp3D})).
Thus, except in this degenerate case, the set of equilibria is
{\it only} that of the planar case already treated.
The types of shock connections may be different than in Section
\ref{s:port2D}, and profiles
may go out of plane to yield new connections.

The above situation is quite reminiscent of the case of MHD \cite{BLZ}.
In particular, if $\alpha_2$ is varied slightly from the rotationally
symmetric situation $\alpha_2=0$, then one may conclude by persistence
of invariant sets as in \cite{FS} 
that ``Alfven''-type profiles must arise in the
rotationally degenerate characteristic field.

\section{Numerical stability analysis}

In this section, we describe the numerical 
Evans function method, based on the numerical approximation using the
polar-coordinate algorithm developed in \cite{HuZ}; 
see also \cite{BHRZ,HLZ,HLyZ,BHZ}.  
Since the Evans function is analytic in the region $\{Re \lambda\ge 0\}$
of interest, we can numerically compute its winding number
around a large semicircle $B(0,R)\cap \{Re \lambda \ge 0\}$,
enclosing all possible nonstable roots.  
This allows us to determine
stability through the Evans condition (D); alternatively,
as we shall do here, through its integrated version
$\rm (\tilde D)$ (resp., $\rm (\tilde D')$).
In the case of instability, one may go further to locate the roots
and study stability and bifurcation boundaries as model parameters are varied.
This approach was introduced in basic form by Evans and Feroe \cite{EF}
and it has since been elaborated and greatly generalized.
For applications to successively more complicated systems, see for example
\cite{PSW,AS,Br,BrZ,BDG,HuZ,HLZ,HLyZ,BHZ,BLZ}.

\subsection{The Evans systems}

Linearizing about a traveling wave solution $(\bar a, \bar b) = (\bar
a_1,\bar a_2, \bar a_3, \bar b_1, \bar b_2, \bar b_3)$
of \eqref{bfirstorder2}, we obtain the eigenvalue problem:
\begin{equation}\label{lineareigprob}
\begin{split}
\lambda a_j-sa_j'-b_j'&=0\qquad \mbox{ for } j=1,2\\
\lambda a_3-sa_3'-b_3'&=0\\
\lambda b_j -s b_j' -(|\bar a|^2a_j+2(\bar a\cdot a)\bar a_j)'&=( b_j'/\bar a_3-a_3\bar b_j'/\bar a_3^2)'\\
\lambda b_3-sb_3'-((|\bar a|^2-1)a_3+2(\bar a\cdot a)\bar a_3)'&
=2(b_3'/\bar a_3-a_3\bar b_3'/\bar a_3^2)'.
\end{split}
\end{equation}
We make the substitution $\tilde a_i(z) =\int_{-\infty}^z a_i(y)~\mbox{d}y$ 
and $\tilde b_i(z)=\int_{-\infty}^z b_i(y)~\mbox{d}y$
into \eqref{lineareigprob} and then integrate from $-\infty$ to $z$ 
to obtain, after dropping the tilde notation:
\begin{equation}\label{inteigprob}
\begin{split}
\lambda a_j-sa_j'-b_j'&=0\\
\lambda a_3-sa_3'-b_3'&=0\\
\lambda b_j-sb_j'-(|\bar a|^2 a_j'+2(\bar a\cdot a')\bar a_j)&=b_j''/\bar a_3 -a_3'\bar b_j'/\bar a_3^2\\
\lambda b_3-sb_3'-((|\bar a|^2-1)a_3'+2(\bar a\cdot a')\bar a_3)&=2(b_3''/\bar a_3-a_3'\bar b_3'/\bar a_3^2).
\end{split}
\end{equation}

\subsubsection{The 3D compressible case}

In the full 3D case (\ref{inteigprob}) may be written as a first order
system $Z'= \mathcal{A}(z,\lambda)Z$, 
with 
$$Z=(b_1,a_1,a_1',b_2,a_2,a_2',b_3,a_3,a_3')^T$$ 
and 
\begin{equation}
\mathcal{A}(z,\lambda)=
\left[\begin{array}{cc} B(z,\lambda) &C(z,\lambda)\\
D(z,\lambda)&E(z,\lambda) \end{array}\right],
\end{equation}
where: 
\begin{equation}\label{Bev}
\begin{split}
B(z,\lambda)=\left[\begin{array}{ccc}
0&\lambda&-s\\ \\
0&0&1\\ \\
\displaystyle{\frac{-\lambda \bar a_3}{s}}&\lambda \bar a_3
&\displaystyle{\frac{\lambda+\bar a_3 (|\bar a|^2-s^2+2\bar a_1^2)}{s}}
\end{array}\right], 
\end{split}
\end{equation}
\begin{equation*}
\begin{split}
C(z,\lambda)&=\left[\begin{array}{cccccc}
0&0&0&0&0&0\\ \\
0&0&0&0&0&0\\ \\
0&0&\displaystyle{\frac{2\bar a_1 \bar a_2 \bar a_3}{s}}&0&0
&\displaystyle{\frac{2\bar a_1 \bar a_3^3 -\bar b_1'}{s\bar a_3}} 
\end{array}\right], \\ \\
D(z,\lambda)&=\left[\begin{array}{cccccc}0&0&0&0&0&0\\ \\
0&0&0&0&0&0\\ \\
0&0&\displaystyle{\frac{2\bar a_1 \bar a_2 \bar a_3}{s}}&0&0
&\displaystyle{\frac{\bar a_1 \bar a_3^2}{s}}\end{array}\right]^T,
\end{split}
\end{equation*}
\begin{small}
\begin{equation}\label{2dcompevans}
{E}(z,\lambda)=
\left[
\begin{array}{cccccc}
0&\lambda&-s&0&0&0\\ \\
0&0&1&0&0&0\\ \\
\displaystyle{\frac{-\lambda \bar a_3}{s}}&\lambda \bar a_3
&\displaystyle{\frac{\lambda +\bar a_3(|\bar  a|^2-s^2+2\bar a_2^2)}{s}}
&0&0&\displaystyle{\frac{-\bar b_2'+2\bar a_2 \bar a_3^3}{s\bar a_3}}\\ \\
0&0&0&0&\lambda&-s\\ \\
0&0&0&0&0&1\\ \\
0&0&\displaystyle{\frac{\bar a_2\bar a_3^2}{s}}&\displaystyle{\frac{-\lambda \bar a_3}{2s}}
&\displaystyle{\frac{\lambda \bar a_3}{2}}
&\displaystyle{\frac{\bar a_3^2(|\bar a|^2-1-s^2+2\bar a_3^2)-2\bar b_3'+2\bar a_3 \lambda}{2s\bar a_3}}
\end{array}\right].
\end{equation}
\end{small}

\subsubsection{The 2D incompressible shear case}

For \eqref{shearfirstorder}, the same procedure as in
(\ref{lineareigprob}) yields:
\begin{equation}\label{sheareigenvalueeq}
\begin{split}
\lambda a - sa'-b'&=0,\\
\lambda b - sb' - ((1+|\bar a|^2)a + 2(\bar a\cdot a)\bar a)'&=b''.
\end{split}
\end{equation}
Substituting $\tilde a_i(z)=\int_{-\infty}^za_i(x)~\mbox{d}x$, $\tilde b_i(z)=\int_{-\infty}^z b_i(y)~\mbox{d}y$,
into \eqref{sheareigenvalueeq} and integrating from $-\infty$ to
$z$ we obtain, after dropping the tilde notation:
\begin{equation}\label{integratedeigenvalueproblem}
\begin{split}
\lambda a_i-sa_i'-b_i'&=0,  \qquad \mbox{ for } i=1,2\\
\lambda b_i-sb_i'-((1+|\bar a|^2)a_i'+ 2(\bar a \cdot a') \bar a_i) &=b_i''.
\end{split}
\end{equation}
Let $Z=(a_1,b_1,b_1',a_2,b_2,b_2')^T$.
Then \eqref{integratedeigenvalueproblem} may be written as (\ref{firstorderE}),
where:
\begin{small}
\begin{equation*}
\begin{aligned}
&\mathcal{A}(z,\lambda) =\\
&\;  \left[\begin{array}{cccccc}
\displaystyle{\frac{\lambda}{s}}&0&\displaystyle{\frac{-1}{s}}&0&0&0\\ \\
0&0&1&0&0&0\\ \\
\displaystyle{\frac{-\lambda(1+3\bar a_1^2+\bar a_2^2)}{s}}&\lambda
&\displaystyle{\frac{1-s^2+3\bar
  a_1^2+\bar a_2^2}{s}}&\displaystyle{\frac{-2\lambda\bar a_1\bar a_2}{s}}&
0&\displaystyle{\frac{2\bar a_1\bar a_2 }{s}}\\ \\
0&0&0&\displaystyle{\frac{\lambda}{s}}&0&-\displaystyle{\frac{1}{s}}\\ \\
         0&0&0&0&0&1\\ \\
-\displaystyle{\frac{2\lambda \bar a_1 \bar a_2 }{s}}&0&\displaystyle{\frac{2\bar a_1 \bar
a_2}{s}}&\displaystyle{\frac{-\lambda(1+\bar a_1^2+3\bar a_2^2)}{s}}&\lambda
&\displaystyle{\frac{1-s^2+\bar a_1^2+3\bar a_2^2}{s}}
\end{array}\right].
\end{aligned}
\end{equation*}
\end{small}

\begin{center}
\begin{figure}[htbp] 
$\begin{array}{lr}
 (a) \includegraphics[scale=.4]{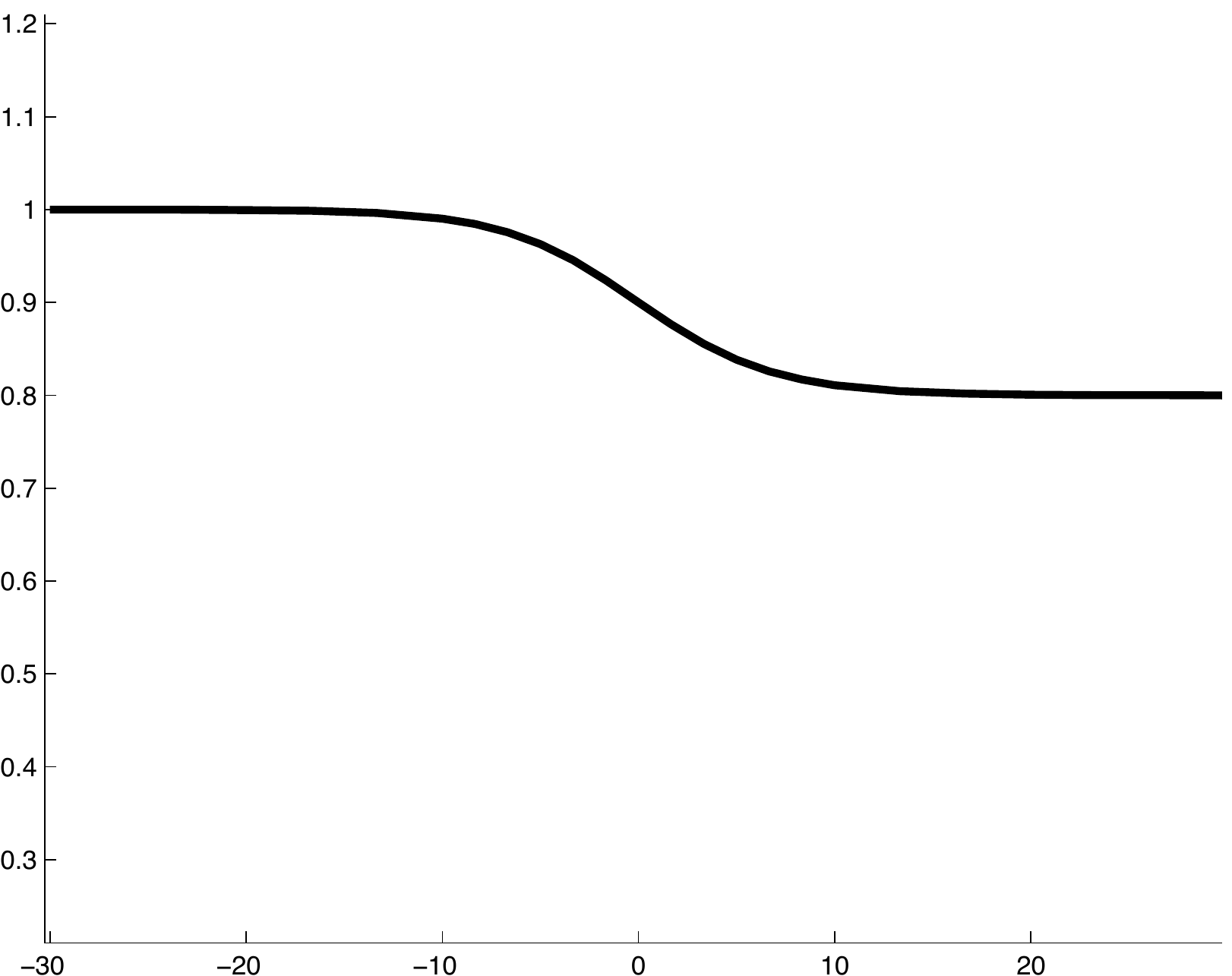}& (b) \includegraphics[scale=.4]{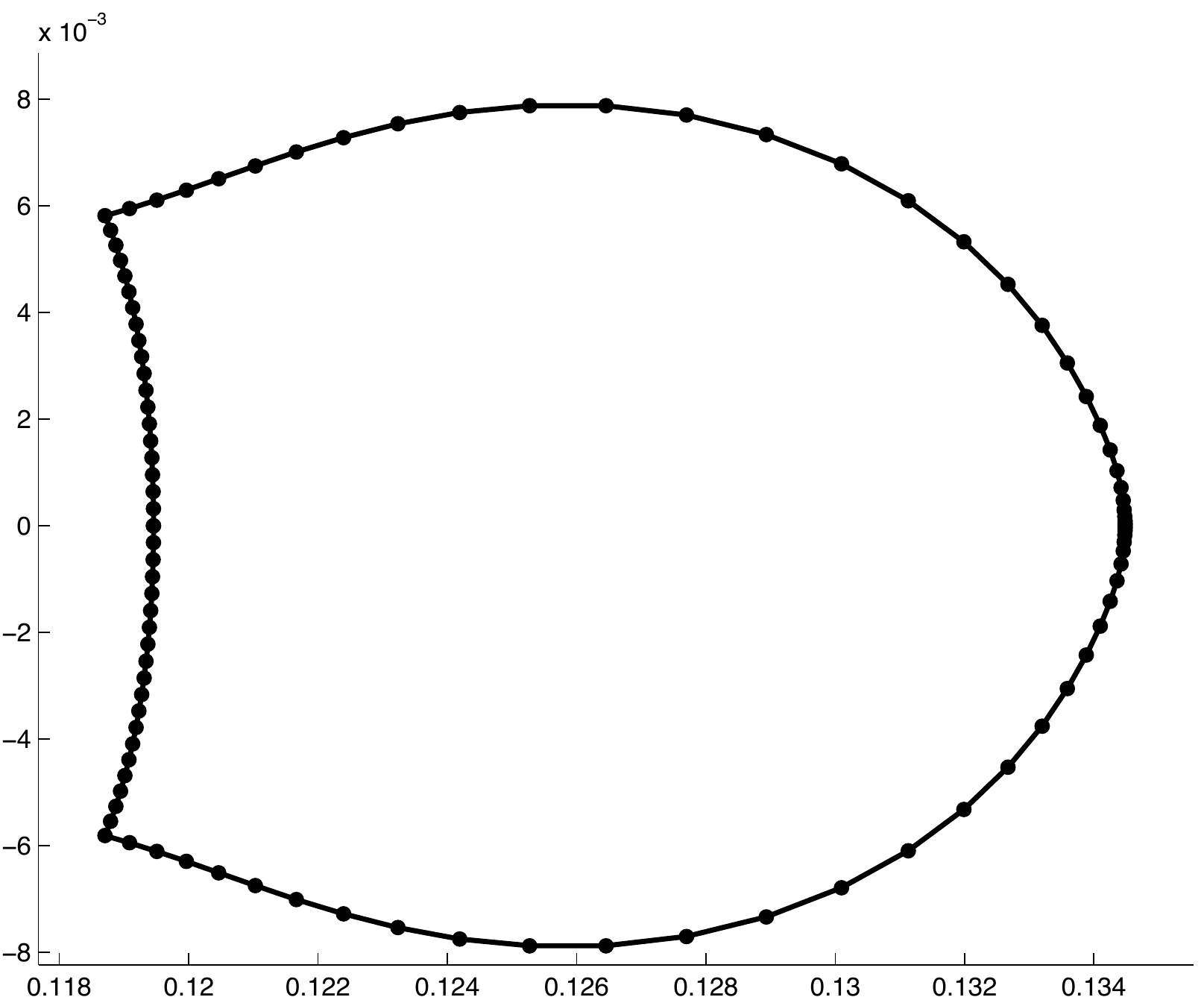}
 \end{array}$
 \caption{(a) Traveling wave profile $\bar V$ for the shear case with parameters values $\alpha_1=1$, $\alpha_2=0$, and $s=1.8547$ corresponding to a Lax shock connecting endstates $(1,0)$ an $(0.8,0)$. (b) The image of the semicircle under Evans function $\tilde D$.}\label{shear_evan_lax}
\end{figure}
\end{center}

\begin{center}
\begin{figure}[htbp] 
$\begin{array}{lr}
 (a) \includegraphics[scale=.4]{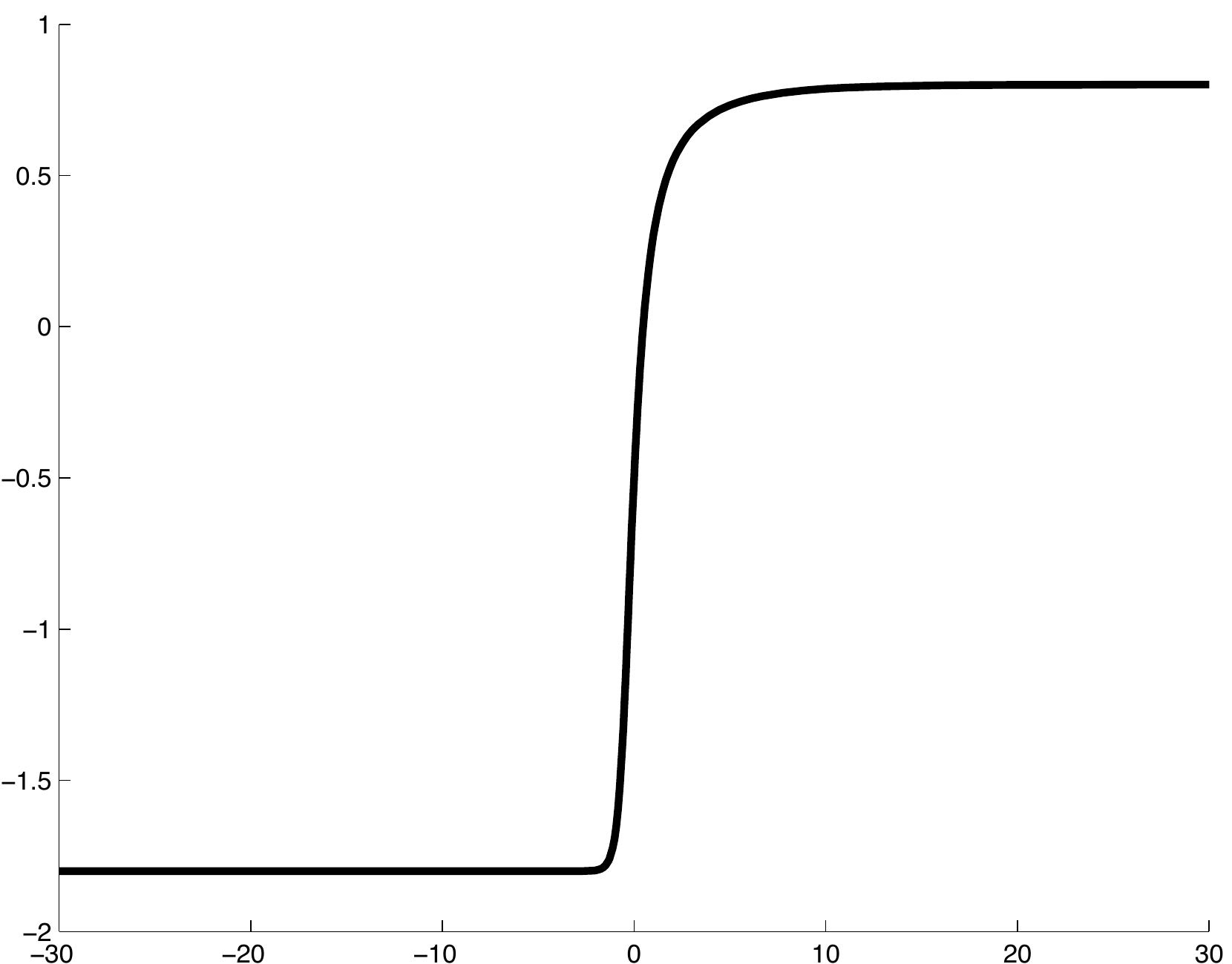}&(b) \includegraphics[scale=.4]{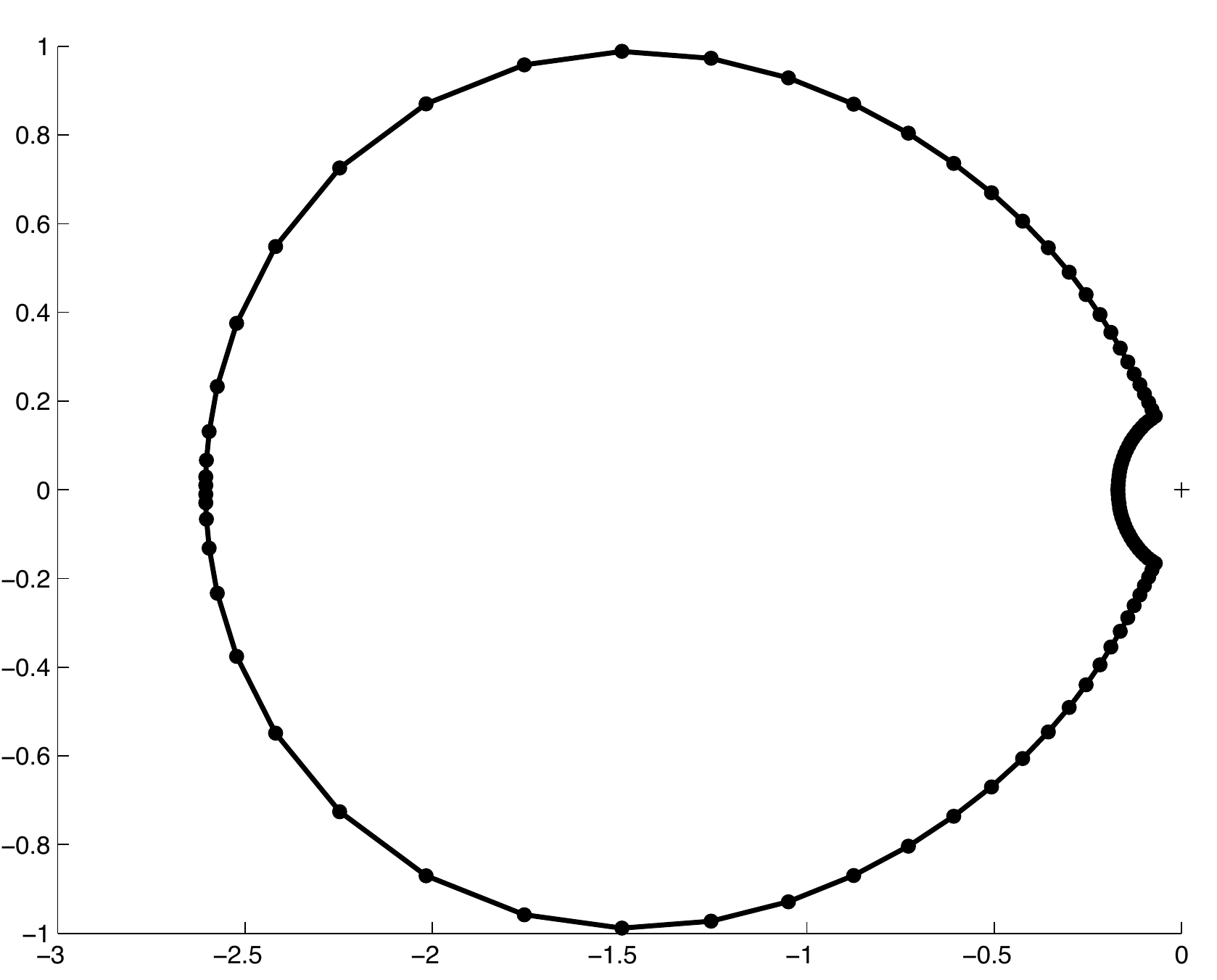}
 \end{array}$
 \caption{(a) Traveling wave profile $\bar V$  for the shear case with parameter values $\alpha_1=1$, $\alpha_2=0$, $s=1.8547$ corresponding to an overcompressive wave connecting endstates $(0.8,0)$ and $(-1.8,0)$. (b)   The image of the  semicircle under Evans function $\tilde D$.}\label{shear_evan_oc}
\end{figure}
\end{center}

\subsubsection{The 2D compressible case}

With $j=2$ in \eqref{inteigprob} and using $b_j'=\lambda a_j - s
a_j'$, \eqref{inteigprob} can be equivalently written
as (\ref{firstorderE}) 
with $Z=(b_2,a_2,a_2',b_3,a_3,a_3')^T$ and $\mathcal{A}(z,\lambda)
= E(z,\lambda)$ given in (\ref{2dcompevans}) where $a=(a_2, a_3)$.

\subsubsection{Transverse equations} 
Consider now a 2D compressible solution as a solution
of the full 3D system (\ref{bfirstorder2}). 
We find that the integrated eigenvalue equations (\ref{inteigprob})
decouple into the 2D equations plus the transverse system,
obtained from the equations corresponding to $j=1$ in system
(\ref{inteigprob}) after putting $\bar a_1 = \bar b_1 = 0$:
\ba\label{trans}
\lambda a_{1} -sa_1'-b_1'&=0,\\
\lambda b_{1} -sb_1'-|\bar a|^2 a_1'&=
\frac{b_1''}{\bar a_3},\\
\ea
The system (\ref{trans}) has the form of as (\ref{firstorderE}) with
$Z=(b_1, a_1, a_1')$ and $\mathcal{A}(z,\lambda)= B(z,\lambda)$ 
given in (\ref{Bev}), after putting $\bar a_1=0$. 



\subsection{Approximation of the profile and of the Evans function}
\label{profnum}

Following \cite{BHRZ,HLZ}, we approximate the traveling wave profile 
using one of \textsc{MATLAB}'s boundary-value solvers 
{\tt bvp4c} \cite{SGT}, {\tt bvp5c} \cite{KL}, or {\tt bvp6c} \cite{HM}.
These are adaptive Lobatto quadrature schemes that can be interchanged
for our purposes;  
for rigorous error/convergence bounds for such algorithms, 
see e.g. \cite{Be1,Be2}.
The calculations are performed on a finite computational domain 
$[-L,L]$,  
where the values of approximate plus and minus spatial infinity $L$ 
are determined experimentally by the requirement that the absolute error 
$|\bar V(\pm L)-V_\pm|\leq TOL$ be within a prescribed tolerance, 
say $TOL=10^{-3}$. Here $\bar V$ and $V_\pm$ are the profile and limiting endstates
as defined in Section \ref{s:stabtheory}.


\smallskip

Using now the notation of section \ref{s:stabtheory}, define for $z\geq0$ and $z\leq 0$, respectively:
$$\mathcal{Z}^+(z,\lambda) = Z^+_1(z,\lambda) \wedge \ldots \wedge
Z^+_k(z,\lambda) \quad \mbox{and}  \quad
\mathcal{Z}^-(z,\lambda) = Z^-_{k+1}(z,\lambda) \wedge \ldots \wedge
Z^-_N(z,\lambda), $$
so that the Evans function is given by:
\begin{equation}\label{Dhere}
D(\lambda) = \mathcal{Z}^+(0,\lambda) \wedge \mathcal{Z}^-(0,\lambda).
\end{equation}
Since for $L>0$ large, $P_\pm(\pm L,\lambda)$ approximately equals
$\mbox{Id}$, we obtain:
\begin{equation}\label{newno}
Z_i^\pm(\pm L,\lambda) \sim e^{\pm\mathcal{A}_\pm(\lambda) L}
\tilde Z_i^\pm(\lambda),
\end{equation}
where the ``$+$''  sign is taken with indices
$i=1\ldots k$ and the ``$-$'' sign with $i=k+1\ldots N$. 
Recall that $\mathcal{S}(\lambda)=span\{\tilde Z_i^+(\lambda)\}_{i=1..k}$ is
the stable space of $\mathcal{A}_+(\lambda)$, while
$\mathcal{U}(\lambda)=span \{\tilde Z_i^-(\lambda)\}_{i=k+1..N}$ represents 
the unstable space of $\mathcal{A}_-(\lambda)$.

The analytic bases $\{\tilde Z_i^\pm(\lambda)\}_i$ are obtained by the
following procedure \cite{HuZ, Z2}. First, one computes the
eigenprojections $\mathcal{P}_+(\lambda)$ and $\mathcal{P}_-(\lambda)$
onto, respectively, the space $\mathcal{S}(\lambda)$ and $\mathcal{U}(\lambda)$.
This can be done by setting:
$$\mathcal{P}_\pm = R_\pm \big(L_\pm R_\pm\big)^{-1}L_\pm,$$
where $R_\pm(\lambda)$ and $L_\pm(\lambda)$ are matrices consisting of any orthonormal
right and left bases of $\mathcal{S}(\lambda)$ (when with subscript
``+'') and $\mathcal{U}(\lambda)$ (when with subscript ``--''). Then, a
standard result in matrix perturbation theory \cite{K} states that the
analytic in $\lambda$ bases $\tilde{\mathcal{Z}}^\pm(\lambda)$ can be
prescribed constructively as the solution of Kato's ODE:
$$(\tilde{\mathcal{Z}}^\pm)' = \Big( \mathcal{P}_\pm' \mathcal{P}_\pm
- \mathcal{P}_\pm \mathcal{P}_\pm' \Big) \tilde{\mathcal{Z}}^\pm, 
\qquad \tilde{\mathcal{Z}}^\pm(\lambda_0) = R_\pm(\lambda_0),$$
where $'$ denotes the differentiation with respect to $\lambda$.
This prescription is also minimal in the sense that $\mathcal{P}_\pm R_\pm'=0$.

We now continue the construction of the approximate Evans
function.  As a consequence of (\ref{newno}), for large $L$ we set:
\begin{equation*}
\begin{split}
\mathcal{Z}^+(L,\lambda) \sim ~&\mathcal{Z}^+_{app}(L,\lambda) :=
e^{\mbox{tr} (\mathcal{A}_+(\lambda)_{|\mathcal{S}(\lambda)})L} \tilde
Z_1^+(\lambda) \wedge\ldots\wedge \tilde Z_k^+(\lambda),\\
\mathcal{Z}^-(-L,\lambda) \sim ~ &\mathcal{Z}^-_{app}(-L,\lambda) :=
e^{-\mbox{tr} (\mathcal{A}_-(\lambda)_{|\mathcal{U}(\lambda)})L} \tilde
Z_{k+1}^-(\lambda) \wedge\ldots\wedge \tilde Z_N^-(\lambda).
\end{split}
\end{equation*}
The objective is now to trace the evolution of the differential form
$\mathcal{Z}^+_{app}(\cdot, \lambda)$ backward in $z$, and the evolution
of $\mathcal{Z}^-_{app}(\cdot, \lambda)$ forward in $z$, starting
from, respectively,
the initial data $\mathcal{Z}^+_{app}(L, \lambda)$ and
$\mathcal{Z}^-_{app}(-L, \lambda)$, and according to
the system as in  (\ref{firstorderE}):
\begin{equation}\label{Zapp}
\mathcal{Z}_{app}'(z,\lambda) = \mathcal{A}(z,\lambda)
\mathcal{Z}_{app}(z,\lambda). 
\end{equation}
The numerical approximation of
$D(\lambda)$ in (\ref{Dhere}) is then recovered through:
\begin{equation}\label{Dapp}
D(\lambda) \sim D_{app}(\lambda)
:= \mathcal{Z}^+_{app}(0,\lambda) \wedge
\mathcal{Z}^-_{app}(0,\lambda).
\end{equation}

To solve (\ref{Zapp}) for $\mathcal{Z}^\pm_{app}$ we use
the polar-coordinate method described in \cite{HuZ}, 
which encodes $\mathcal{Z}^\pm_{app}$ as product of a complex scalar $r^\pm$ and the exterior
product $\Omega^\pm$ of an orthonormal basis 
$\{\omega_i^+\}$ of $\mathcal{S}$ or, respectively, an orthonormal
basis $\{\omega_i^-\}$ of $\mathcal{U}$:
$$\mathcal{Z}_{app}^\pm(z,\lambda)=r^\pm(z,\lambda)
\Omega^\pm(z,\lambda),\qquad \Omega^+ = \omega_1^+\wedge\ldots\wedge
\omega_k^+, \quad \Omega^- = \omega_{k+1}^-\wedge\ldots\wedge
\omega_N^-.$$
The above quantities $\Omega$ evolve 
by some implementation (e.g. Drury's method below) of continuous 
orthogonalization, where the ``radius'' $r$ satisfies 
a scalar ODE slaved to $\Omega$, related to Abel's formula 
for evolution of a full Wronskian. Namely (\ref{Zapp}),
is equivalent to:
\begin{equation}\label{Drury}
\begin{split}
\Omega'(z,\lambda) & = \Big(\mbox{Id}_N - \Omega\Omega^* \Big) 
\mathcal{A}(z,\lambda) \Omega(z,\lambda)\\
r'(z,\lambda) &= \mbox{tr}\Big(\Omega^*\mathcal{A}(z,\lambda)\Omega\Big)\cdot r(z,\lambda)
\end{split}
\end{equation}
and we recover, in view of (\ref{Dapp}): 
$$D_{app}(\lambda) =  r^+(0,\lambda) r^-(0,\lambda)
\cdot \Omega^+(0,\lambda)\wedge\Omega^-(0,\lambda),$$
see \cite{HuZ,Z2,Z3} for further details. The rationale for solving
the system (\ref{Drury}) for the decomposition of $\mathcal{Z}_{app}$, rather
than the original (\ref{Zapp}) is that the imposition of orthonormality
on $\Omega$ prevents the collapse of the various columns (solutions)
onto a single fastest-growing mode, as would otherwise be the case.
For a discussion of this and other numerical issues connected with
the polar coordinate method, see \cite{HuZ,Z3}.


The calculations of (\ref{Drury}) for individual $\lambda$ are carried out 
using \textsc{MATLAB}'s {\tt ode45} routine, an
adaptive 4th-order Runge-Kutta-Fehlberg method (RKF45)
with excellent accuracy and automatic error control.  
Typical runs involved roughly $60$ mesh points per side, 
with error tolerance set to {\tt AbsTol = 1e-8} and {\tt RelTol = 1e-6}.  
To produce analytically varying Evans function output, the initializing
bases $\{\tilde Z^\pm_i\}$ are chosen analytically using Kato's ODE
\cite{GZ,HuZ,BrZ,BHZ}.
Numerical integration of Kato's ODE is carried out using a
second-order algorithm introduced in \cite{Z2,Z3}. 

\subsection{Winding number computation}
\label{windingalg}
Recall that the Evans condition amounts to
checking for the existence of unstable zeros of the 
integrated Evans function $\tilde D$, described in section \ref{s:Evansint}.
We first observe (Proposition 6.10 \cite{HLZ}),
that for shock profiles of the hyperbolic--parabolic systems of the type
we consider, there holds:
\begin{equation}\label{Rlimit}
\lim_{|\lambda|\to \infty} \frac{\tilde D(\lambda)}{e^{\alpha \sqrt{\lambda}}}=C
\qquad \hbox{\rm uniformly on } \; Re ~\lambda\ge 0,
\end{equation}
with constants $\alpha$ and $C\neq 0$.
When $\tilde D$ is initialized in the standard way on the real axis, 
so that $\tilde D(\lambda)=\tilde D(\bar \lambda)$, $\alpha$
and $C$ are necessarily real.  
The knowledge that limit in \eqref{Rlimit} exists allows actually to
determine $\alpha$ and $C$ by curve fitting of $\log \tilde
D(\lambda)=\log C+\alpha \lambda^{1/2}$  with respect to
$\lambda^{1/2}$, for large $|\lambda|$. 

One further determines the radius $R>0$ so that:
$$\tilde D(\lambda)\neq 0 \qquad \mbox{ for } |\lambda|\geq R 
\mbox{ and } Re~\lambda \geq 0,$$
by taking $R$ to be a value for which the relative error
between $\tilde D(\lambda)$ and $Ce^{\alpha \sqrt{\lambda}}$ becomes
less than $0.2$ on the entire semicircle:
$$ S_R = \partial \Big( B(0,R)\cap \{Re ~\lambda \ge 0\} \Big), $$
indicating sufficient convergence to ensure nonvanishing.  
For many parameter combinations, $R=2$ was sufficiently large. 
Alternatively, we could use energy estimates or direct tracking bounds
as in \cite{HLZ} and \cite{HLyZ}, respectively, to eliminate
the possibility of eigenvalues of sufficiently high frequency.
However, we have found the convergence study to be much more efficient
in practice; see \cite{HLyZ}.

We now compute the winding number $I(R)$ of the image curve $\tilde
D(S_R)$ with respect to $0$, which equals the degree of the
$2$d vector field given by $\tilde D$ in the interior region of $S_R$.
Since the index of any nondegenerate zero of a
holomorphic function is $+1$,  the condition 
$$I(R)=0$$ 
is hence equivalent to $\tilde D$ having no zeros in the open interior of the curve $S_R$.
Since all the shocks considered here are of Lax or overcompressive type,
condition $I(R)=0$ is equivalent to the Evans stability condition ($\tilde D$).

The winding number $I(R)$ is now computed by varying values of $\lambda$ 
along $20$ points of the contour $S$, with mesh size taken quadratic
in modulus to concentrate sample points near the origin where angles 
change more quickly, and summing the resulting changes in 
${\rm arg}(\tilde D(\lambda))$, using $\Im \log \tilde D(\lambda) 
= {\rm arg} \tilde D(\lambda) ({\rm mod} 2\pi)$.
To ensure winding number accuracy, we test a posteriori that the change in 
$\tilde D$ for each step is less than $0.2$, and add mesh points as
necessary to achieve this.  
(Recall, by Rouch\'e's Theorem, that accuracy is preserved 
so long as relative variation of $\tilde D$ along each mesh interval is 
$\le 1.0$.) 
In Tables \ref{tb_error} and \ref{tb_error2}  we give the radius of the domain
contour,  the number of mesh points, the relative error for change 
in argument of $\tilde D(\lambda)$ between steps, and the 
numerical approximation of spatial infinity $\pm L$.

\begin{center}
\begin{table}[!b]
\begin{tabular}{|c|c|c|c|c|c||c|c|c|c|c|c|}
\hline
$\alpha$&$s$&$R$&points&error&$L$      &   $\alpha$&$s$&$R$&points&error&$L$\\    \hline
0.2&1.8&2&38&0.1947&16.25&1&1.8&2&27&0.1787&6.3
  \\
  \hline
  1&2.8&2&21&0.1791&2.5&2&2.8&2&21&0.1878&3\\
  \hline
  3&3.8&2&20&0.1103&1.8&5&5.8&2&20&0.0903&1.05\\
\hline
\end{tabular}
\caption{Table demonstrating contour radius, number of mesh points, relative error, and spatial domain for the incompressible case.}
\label{tb_error2}
\end{table}
\end{center}


\begin{center}
\begin{table}[!b]
\begin{small}
\begin{tabular}{|c|c|c||c|c|c|c||c|c|c||c|c|c|c|}
\hline
$\alpha_2$&$\alpha_3$&$s$&$R$&points&error&$L$     &$\alpha_2$&$\alpha_3$&$s$&$R$&points&rel error&$L$\\
\hline
 0.1&1&1.9&2&20&0.13&15.01         & 0.1&6.6&9.5&2&20&0.00&2.01\\
\hline 
   0.9&2.6&8.7&2&20&0.01&2.01    & 1.3&3.4&8.3&2&20&0.01&2.01\\
\hline 
   3.7&4.2&7.1&2&20&0.02&2.01              & 1.7&0.2&3.9&2&20&0.08&19.01\\
\hline 
6.1&2.6&8.7&2&20&0.01&2.01      &4.1&4.6&8.3&2&20&0.01&2.01\\
\hline 
6.9&0.6&8.3&4&20&0.17&7.01       &6.9&0.2&8.3&2&20&0.12&15.01\\
\hline
\end{tabular}
\end{small}
\caption{Table demonstrating contour radius, number of mesh points, relative error, and spatial domain. The data on the left side corresponds to the compressible 2D system, and that on the right to the transverse system.}
\label{tb_error}
\end{table}
\end{center}

\subsection{Results of numerical experiments}\label{s:numresults}

In our numerical study, we sampled from a broad range
of parameters and checked stability of the resulting Lax and
over-compressive profiles whenever their endstates fell into
the hyperbolic region as required by our stability framework.
We did not find any undercompressive profiles for the model considered
here, either in the incompressible shear or the compressible case,
nor did Antman and Malek--Madani find undercompressive profiles in
their investigations of the incompressible shear case \cite{AM}.
As shown in Section \ref{nonexist}, undercompressive connections
cannot occur in the incompressible shear case for any choice of
potential.
However, we do not see why they could not occur for other choices of elastic
potential in the compressible case.
%

All our computations yielded zero winding number, consistent with
stability. All together, our study consisted of over 8,000 Evans function computations.
The following parameter combinations were examined for Evans stability.

\smallskip

\noindent \textbf{The 2D incompressible shear case.}
The following parameter combinations yielded Evans function output
with winding number zero, consistent with stability:
 \begin{align*}
&(\alpha,s) \in \{0.2:0.2:5\} \times \{0.2:0.2:7\}.
\end{align*}

\smallskip

\noindent
\textbf{The 2D compressible case.}
In the compressible 2D case and the transverse case following, we
computed the Evans function for the stated parameter combinations
whenever the profile end-states did not lie in the elliptic region. 
For $\alpha_2\neq 0$, we  restricted our attention to the
profiles connecting rest-points corresponding to 
solutions of \eqref{yquint} in the interval $[-50,50]$. 

We computed the Evans function for all 2 point configurations 
(Lax connections) coming from the following parameter combinations. 
All computations yielded zero winding number, consistent with stability:
\begin{align*}
&(\alpha_2,\alpha_3,s) \in \{0\}
 \times \{1.1:0.5:25.6\}
 \times \{0.5:0.5:20\}.
\\
&(\alpha_2,\alpha_3,s) \in \{0.1:0.4:25.3\}
\times \{0.2:0.4:25.4\}
 \times \{0.3:0.4:25.5\}.
\end{align*}
For the following parameter combinations, 
we investigated the 4 point configurations computing 
all Lax connections, and 5 overcompressive connections passing 
through evenly spaced points on the segment in phase space
connecting the saddle points:
\begin{align*}
&(\alpha_2,\alpha_3,s) \in \{0\} \times \{0.03:0.07:1.03\} \times \{0.05:0.07:1.05\},\\
&(\alpha_2,\alpha_3,s)=\{(0.08,0.59,0.75),(0.08,0.87,0.82),(0.22,0.66,0.75)\}.
\end{align*}

\smallskip

\noindent
\textbf{The transverse case.}
We computed the  transverse Evans function for the two point
configurations (Lax connections) for the following parameter
combinations:
\begin{align*}
&(\alpha_2,\alpha_3,s) \in \{0.1:0.4:25.3\} \times \{0.2:0.4:25.4\} \times \{0.3:0.4:25.5\}.
\end{align*}
In addition we examined the 4 point configuration corresponding to
$(\alpha_2,\alpha_3,s)=(0.1,0.8,0.8)$ computing the Evans function for
the 4 Lax connections and for 5 overcompressive connections passing
through points evenly spaced along the line in phase space between the
two saddle points. 

\subsubsection{Numerical performance}\label{s:perf}

The Evans function computations for the most part worked reliably
and well, showing performance comparable to that seen in previous
studies for gas dynamics \cite{HLZ,HLyZ} and MHD \cite{BHZ,BLZ}.
A typical winding number computation for a single profile 
took approximately 30 seconds and 
computation of the profile approximately 5 seconds.
%
%
%

As expected, performance degraded catastrophically
in various boundary situations: the small-amplitude limit as
$\frac{|a_+-a_-|}{|a_+|+|a_-|}\to 0$; the characteristic limit
as one or more characteristic speeds approach the shock speed;
the large-amplitude limit as $|a_\pm|$ approach infinity or
$a_3$ approaches the physical (infinite compression) boundary $a_3=0$; 
and the elliptic limit as one or both endstates $a_\pm$ 
approach the elliptic region where characteristic speeds are complex.
For discussion of causes of and (partial) cures for these numerical issues,
see, e.g., \cite{HLZ,BHZ,BLZ,Z3}. 
In the present study, such boundary cases were 
omitted.

%
%
%
%
%

\section{Discussion and open problems}
In this paper, we have obtained the first analytical stability results
for viscoelastic shock waves, stability of small-amplitude Lax shocks,
and set up a theoretical framework for future numerical and analytical studies
of shock waves of essentially arbitrary viscoelastic models.
A large-scale numerical Evans study for the canonical model
\eqref{part}  yielded a result of numerical stability for each 
of the more than 8,000  profiles tested, of both classical
Lax and nonclassical overcompressive type, and with amplitudes varying
from near zero to 50.

Interesting problems for the future are the treatment of more realistic potentials with physically correct asymptotic behavior, 
systematic numerical and asymptotic investigation across parameters
as in \cite{HLZ,HLyZ,BHZ,BLZ},
and the treatment of phase transitional elasticity
by incorporation of dispersive surface energy terms.

%
%

\appendix

\section{Appendix: General facts}\label{s:facts}
Though the investigations of this paper were carried out for
special choices of $W$, $\mathcal{Z}$, the methods we use apply
to much more general choices.  With an eye toward future work,
we collect in this appendix the information needed to carry
out such extensions.

\subsection{General elastic potential}\label{s:general}

\begin{theorem}\label{Wform}
Let $W:\mathbb{R}^{3\times 3}\longrightarrow \overline{\mathbb{R}}_+$
satisfy (\ref{frame_inv}) and (\ref{isotropic}). Then there exists
a scalar function $\sigma: \mathbb{R}^3\longrightarrow
\overline{\mathbb{R}}_+$, such that:
$$W(F) = \sigma(|F|^2, |FF^T|^2, \det F) .$$
The derivative $DW(F)\in \mathbb{R}^{3\times 3}$, wherever defined at
$F\in \mathbb{R}^{3\times 3}$
(so that $\partial_A W(F) = DW(F):A$), is given by:
$$ DW(F) =  \nabla \sigma(|F|^2, |FF^T|^2, \det F)\cdot \Big(2F, 4FF^TF,
\mathrm{cof }F\Big).$$
If $W(\mathrm{Id})=0$ and $W$ is $\mathcal{C}^2$ in a neighborhood of
$SO(3)$, then:
$$DW(\mathrm{Id}) = 0, \qquad
D^2W(\mathrm{Id}): A = \lambda (\mathrm{ tr }A)\mathrm{Id} + \mu \mathrm{ sym }A
\qquad \forall A\in\mathbb{R}^{3\times 3},$$
with the convention 
$\partial^2_{A_1, A}W(\mathrm{Id}) = (D^2W(\mathrm{Id}):A):A_1$ and 
the Lam\'e constants $\lambda$ and $\mu$:
$$\lambda = \nabla^2\sigma(3,3,1): \Big((2,4,1)\otimes (2,4,1)\Big),\quad
\mu = \nabla\sigma (3,3,1)\cdot (0,8,-2)$$
satisfying: $\mu\geq 0$ and $3\lambda+\mu\geq 0$.
\end{theorem}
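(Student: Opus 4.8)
The statement splits into a representation part and a differential part, which I would handle separately. \emph{Representation.} The plan is to reduce the two-sided invariance to the classical isotropic representation theorem. Frame invariance $W(RF)=W(F)$ for $R\in SO(3)$ means $W$ is constant on left $SO(3)$-orbits; since, for invertible $F$, $F_2=RF_1$ for some $R\in SO(3)$ exactly when $F_1^TF_1=F_2^TF_2$ and $\det F_1=\det F_2$, this forces $W(F)=\widetilde W(F^TF,\det F)$ for some $\widetilde W$. Isotropy $W(FR)=W(F)$ then gives $\widetilde W(Q^TCQ,d)=\widetilde W(C,d)$ for the right Cauchy--Green tensor $C=F^TF$ and all $Q\in SO(3)$, i.e. $\widetilde W(\cdot,d)$ is an isotropic scalar function of the symmetric matrix $C$. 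By the classical representation theorem such a function depends only on the principal invariants $\mathrm{tr}\,C$, $\mathrm{tr}(C^2)$, $\det C$; since $\mathrm{tr}\,C=|F|^2$, $\mathrm{tr}(C^2)=|FF^T|^2$, $\det C=(\det F)^2$, and the sign of $\det F$ is already retained as an argument, I obtain $W(F)=\sigma(|F|^2,|FF^T|^2,\det F)$, extending $\sigma$ arbitrarily off the range of the invariant map. The one genuinely delicate point is that $SO(3)$-conjugation orbits on symmetric matrices coincide with $O(3)$-orbits (so that ``isotropic'' really yields the principal invariants), with coincident-eigenvalue loci disposed of by continuity; this is the step I expect to be the main obstacle.

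\emph{First derivative.} Next I would differentiate by the chain rule, using $D_F|F|^2=2F$, $D_F|FF^T|^2=4FF^TF$ (expand $\partial_A\mathrm{tr}((FF^T)^2)$ and use cyclicity of the trace), and $D_F\det F=\mathrm{cof}\,F$ (Jacobi's formula). This gives $DW(F)=\nabla\sigma\cdot(2F,4FF^TF,\mathrm{cof}\,F)$. At $F=\mathrm{Id}$ the arguments equal $(3,3,1)$ and the three matrix slots are $(2\,\mathrm{Id},4\,\mathrm{Id},\mathrm{Id})$, so $DW(\mathrm{Id})=(2\partial_1\sigma+4\partial_2\sigma+\partial_3\sigma)\mathrm{Id}$. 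Since $W\ge 0=W(\mathrm{Id})$, the identity is a global minimiser (equivalently $W\equiv 0$ on $SO(3)$ by frame invariance), whence $DW(\mathrm{Id})=0$; this is exactly the scalar identity $2\partial_1\sigma+4\partial_2\sigma+\partial_3\sigma=0$ at $(3,3,1)$, which I would use repeatedly below.

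\emph{Second derivative.} I would first fix the \emph{form} of the Hessian from invariance and only then compute the constants. Differentiating the infinitesimal frame invariance $DW(F):(KF)=0$ (for skew $K$) once more at $\mathrm{Id}$ and using $DW(\mathrm{Id})=0$ yields $D^2W(\mathrm{Id})[A,K]=0$ for every skew $K$, so the Hessian is a bilinear form on symmetric matrices alone; isotropy makes this form $SO(3)$-invariant on $\mathrm{Sym}(3)$, and the only such invariants are $(\mathrm{tr}\,S)(\mathrm{tr}\,S_1)$ and $S:S_1$, forcing $D^2W(\mathrm{Id}):A=\lambda(\mathrm{tr}\,A)\mathrm{Id}+\mu\,\mathrm{sym}\,A$. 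To identify $\lambda,\mu$ I would differentiate the first-derivative formula again and evaluate at $\mathrm{Id}$: the piece from differentiating $\nabla\sigma$ contributes a pure trace term with coefficient $\nabla^2\sigma:((2,4,1)\otimes(2,4,1))$ (because $D_FI_j:A_1=c_j\,\mathrm{tr}\,A_1$ with $c=(2,4,1)$), while the piece from differentiating the matrix slots needs $\partial_{A_1}(FF^TF)|_{\mathrm{Id}}=2A_1+A_1^T$ and $\partial_{A_1}(\mathrm{cof}\,F)|_{\mathrm{Id}}=(\mathrm{tr}\,A_1)\mathrm{Id}-A_1^T$ (the Hessian of $\det$). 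Collecting into trace, symmetric and skew parts and substituting the first-order identity $2\partial_1\sigma+4\partial_2\sigma+\partial_3\sigma=0$ makes the skew contributions cancel and lets me read off $\mu=\nabla\sigma(3,3,1)\cdot(0,8,-2)$ together with $\lambda$. I expect this collection of terms --- the cofactor Hessian and the careful use of the first-order identity to match the trace coefficient --- to be the main computational difficulty.

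\emph{Inequalities.} Finally $\mu\ge 0$ and $3\lambda+\mu\ge 0$ follow from second-order optimality at the minimiser $\mathrm{Id}$: positive semidefiniteness of $D^2W(\mathrm{Id})$ tested on a trace-free symmetric $A$ gives $\mu|A|^2\ge 0$, and tested on $A=\mathrm{Id}$ gives $9\lambda+3\mu=3(3\lambda+\mu)\ge 0$; on skew $A$ the form vanishes, consistent with the reduction above.
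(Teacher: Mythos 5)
Your overall route is the same as the paper's: the representation part is the classical isotropic representation theorem (the paper simply cites Truesdell--Noll together with $\mathrm{tr}\,\mathrm{cof}\,Q=\tfrac12\big((\mathrm{tr}\,Q)^2-\mathrm{tr}(Q^2)\big)$, whereas you rebuild the orbit argument; the $SO(3)$-versus-$O(3)$ conjugation point you single out as the main obstacle is in fact not one --- a symmetric matrix is diagonalized by a \emph{proper} rotation, and no continuity is needed), the first derivative and $DW(\mathrm{Id})=0$ are obtained identically, and the inequalities come, exactly as in the paper, from testing $D^2W(\mathrm{Id})\ge 0$ on traceless matrices and on $\mathrm{Id}$. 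The only structural difference is that you first pin down the \emph{form} of the Hessian by invariance and then compute the constants, while the paper computes everything at once via its Lemma \ref{lem1} listing the derivatives of $(|F|^2,|FF^T|^2,\det F)$ at $\mathrm{Id}$.

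The one step you leave unexecuted ("read off $\mu$ together with $\lambda$") is precisely the one that does not come out as claimed. Your correct formula $\partial_{A_1}(\mathrm{cof}\,F)|_{\mathrm{Id}}=(\mathrm{tr}\,A_1)\mathrm{Id}-A_1^T$ contributes, besides the $-\partial_3\sigma\,A_1^T$ that enters the symmetric/skew bookkeeping, a pure-trace term $\partial_3\sigma(3,3,1)\,(\mathrm{tr}\,A_1)\,\mathrm{Id}$. Collecting trace parts therefore yields $\lambda=\nabla^2\sigma(3,3,1):\big((2,4,1)\otimes(2,4,1)\big)+\partial_3\sigma(3,3,1)$, not the value displayed in the statement. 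The first-order identity $2\partial_1\sigma+4\partial_2\sigma+\partial_3\sigma=0$ does what you say for the other pieces --- it cancels the skew part and gives $\mu=8\partial_2\sigma-2\partial_3\sigma=\nabla\sigma\cdot(0,8,-2)$ --- but it does not make $\partial_3\sigma$ itself vanish. So you must either carry the extra term explicitly (in which case your computation disagrees with the stated $\lambda$; the discrepancy originates in the paper's Lemma \ref{lem1}, whose expression for $\partial^2_{A_1,A}\gamma(\mathrm{Id})$ equals $-A^T:A_1$ and omits the $(\mathrm{tr}\,A)(\mathrm{tr}\,A_1)$ part of the second derivative of $\det$), or show that it vanishes, which it does not in general --- it happens to vanish for $W_0$, where $\sigma$ is independent of $\det F$, but one can construct admissible $W$ with $\partial_3\sigma(3,3,1)\neq 0$. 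Everything else in your computation of $\mu$ and in the final inequalities is correct.
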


\begin{proof}
According to the representation theorem \cite{TN}, every frame invariant and
isotropic $W$ depends only on the principal invariants 
of the left Cauchy deformation tensor $FF^T$,
that is $W(F) = \bar{\sigma}(\mbox{tr}(FF^T), \mbox{tr cof }(FF^T),
\det(FF^T))$. Since $\mbox{tr cof } Q = \frac{1}{2}(\mbox{tr }Q)^2 
- \frac{1}{2} \mbox{tr } (Q^2)$, 
the claim on the form of $W$ follows directly.

The formula for derivative $DW(F)$ follows from:
$$\partial_A |F|^2 = 2F:A,\quad \partial_A |FF^T|^2= 4FF^TF:A,
\quad \partial_A \mbox{det}F = \mbox{cof } F:A,$$
where for the last expression we used 
$\det(F+Q) = \det F + F:\mbox{cof } Q + Q:\mbox{cof } F + \det Q$,
 valid for $3\times 3$ matrices $F,Q$.

The vanishing of $DW(\mbox{Id})$ is clear since $W$ is minimized at
$\mbox{Id}$. The formula for $D^2W(\mbox{Id})$ follows by 
chain rule and Lemma \ref{lem1}.
Further, notice that $\partial^2 W_{A,A}(\mbox{Id}) \geq 0$, which reads:
\begin{equation}\label{p1}
\lambda |\mbox{tr }A|^2 + \mu |\mbox{ sym }A|^2\geq 0
\qquad \forall A\in \mathbb{R}^{3\times 3}.
\end{equation}
Evaluating (\ref{p1}) first at a traceless $A$ and then at $A=\mbox{Id}$ we see
that:
\begin{equation}\label{p2}
\mu\geq 0, \qquad 3\lambda+\mu\geq 0.
\end{equation}
To prove that (\ref{p2}) implies (\ref{p1}), write $ \mbox{sym }A $ a the sum
of orthogonal
matrices: $\mbox{sym } A = \mbox{diag} (a_{11}, a_{22}, a_{33}) + B$. Then:
$|\mbox{sym } A|^2 = \sum_{i=1}^3 a_{ii}^2 + |B|^2$, so:
$$\lambda |\mbox{tr }A|^2 + \mu |\mbox{ sym }A|^2 =
\lambda (\sum_{i=1}^3 a_{ii})^2 + \mu  \sum_{i=1}^3 a_{ii}^2 +\mu |B|^2
\geq (\lambda+\mu/3)(\sum_{i=1}^3 a_{ii})^2  + \mu |B|^2,$$
which ends the proof.
\end{proof}

For the behavior of $W$ close to the energy well $SO(3)$
it is important to know the derivatives of $W$ at
$R\in SO(3)$. It follows by frame invariance that:
$$\partial^n_{RF_1,\ldots RF_n}W (R) = \partial^n_{F_1,\ldots F_n}W
(\mathrm{Id})
\qquad \forall F_1\ldots F_n\in\mathbb{R}^{3\times 3}\quad\forall R\in SO(3).$$
Hence, it suffices to find the derivatives of $W$ at $\mathrm{Id}$.
Direct calculation yields the following:

\begin{lemma}\label{lem1}
For $F\in\mathbb{R}^{3\times 3}$, let $\alpha(F) = |F|^2$, $\beta(F)=|FF^T|^2$
and $\gamma(F) = \det F$. Then, for any $A_1, A\in \mathbb{R}^{3\times 3}$ we
have:
\begin{equation*}
\begin{split}
(\alpha, \beta,\gamma)(\mathrm{Id}) &= (3,3,1)\\
\partial_{A_1}(\alpha, \beta,\gamma)(\mathrm{Id})&= \Big(\mathrm{Id}:A_1\Big)
(2,4,1) \\
\partial^2_{A_1,A}(\alpha, \beta,\gamma)(\mathrm{Id})&= \Big(A:A_1\Big) (2,4,1)
+  \Big(\mathrm{sym} A:A_1\Big) (0,8,-2)\\
\partial^3_{A_1,A,A}(\alpha, \beta,\gamma)(\mathrm{Id})&= - \Big(\mathrm{cof
}A:A_1\Big) (0,8,-2)
+  \Big((\mathrm{cof } A + AA^T + 2A^2):A_1\Big) (0,8,0)\\
\partial^4_{A_1,A,A,A}(\alpha, \beta,\gamma)(\mathrm{Id})&=
3\Big((AA^TA):A_1\Big) (0,8,0)
\end{split}
\end{equation*}
\end{lemma}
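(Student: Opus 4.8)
The plan is to treat each of the three scalar functions $\alpha,\beta,\gamma$ separately, exploiting that each is a polynomial in the entries of $F$, so that the Taylor expansion of $t\mapsto f(\mathrm{Id}+tA)$ about $t=0$ terminates and the asserted derivatives are nothing but polarizations of its coefficients. Concretely, for a fixed direction $A$ the coefficient of $t^n/n!$ in $f(\mathrm{Id}+tA)$ is the diagonal derivative $D^n f(\mathrm{Id})[A,\dots,A]=:P_n(A)$, and the mixed derivative $\partial^n_{A_1,A,\dots,A}f(\mathrm{Id})$ appearing in the statement is recovered by the polarization identity $\partial^n_{A_1,A,\dots,A}f(\mathrm{Id})=\tfrac1n\,D_{A_1}P_n(A)$, i.e. by differentiating the $n$-th diagonal coefficient once more in the single direction $A_1$. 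The zeroth-order values $(\alpha,\beta,\gamma)(\mathrm{Id})=(3,3,1)$ and the first-order values $(\mathrm{Id}:A_1)(2,4,1)$ are immediate from $\mathrm{tr}\,\mathrm{Id}=3$, $|\mathrm{Id}|^2=3$, $\det\mathrm{Id}=1$ and the first-derivative formulas already recorded in the proof of Theorem \ref{Wform}.

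For the individual expansions I would reuse the three building blocks established there, namely $\partial_A|F|^2=2F:A$, $\partial_A|FF^T|^2=4FF^TF:A$ and $\partial_A\det F=\mathrm{cof}\,F:A$, together with the terminating identity $\det(\mathrm{Id}+tA)=1+t\,\mathrm{tr}\,A+t^2\,\mathrm{tr}(\mathrm{cof}\,A)+t^3\det A$. Since $\alpha(\mathrm{Id}+tA)=3+2t\,\mathrm{tr}\,A+t^2|A|^2$ is already quadratic, its diagonal derivatives of order $\ge3$ vanish, giving the first entries of the higher-order formulas for free. For $\beta$ the clean route is to set $M:=(\mathrm{Id}+tA)(\mathrm{Id}+tA)^T=\mathrm{Id}+t(A+A^T)+t^2AA^T$ and expand $\beta=\mathrm{tr}(M^2)$; reading off the coefficients of $t^2,t^3,t^4$ and polarizing produces the $\beta$-entries carrying the weights $(0,8,\cdot)$ and the quartic $(AA^TA):A_1$ term. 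For $\gamma$ the cubic $\det(\mathrm{Id}+tA)$ immediately yields the quadratic coefficient $\tfrac12((\mathrm{tr}A)^2-\mathrm{tr}\,A^2)$ and the cubic coefficient $\det A$, whose polarizations furnish the $\mathrm{cof}\,A:A_1$ contributions with weights $1$ and $-2$, all higher diagonal coefficients being zero.

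The step requiring the most care — and the one I would treat as the main obstacle — is the polarization of the $\beta$ expansion at third and fourth order. Differentiating a product such as $FF^TF$ (equivalently $M^2$) repeatedly generates several terms that differ only by where transposes fall, and these must be grouped correctly before being contracted against the single distinguished direction $A_1$; in particular one must track the interplay of $AA^T$, $A^TA$ and $A^2$, moving everything into the standard pairing $\,\cdot:A_1$ via $\mathrm{tr}(XA_1)=X^T:A_1$ and $\mathrm{tr}(XA_1^T)=X:A_1$, and verifying that the surviving combinations are exactly those displayed (which collapse further on the symmetric part of $A$, as is relevant for the chain-rule application in Theorem \ref{Wform}). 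Checking that, at each order $n=1,2,3,4$, the coefficients organize precisely along the two fixed isotropic weight vectors $(2,4,1)$ and $(0,8,-2)$, together with the purely $\beta$-direction $(0,8,0)$ at orders three and four, is then a finite mechanical verification that completes the proof.
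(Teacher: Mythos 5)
Your method --- expand $f(\mathrm{Id}+tA)$ as a terminating polynomial in $t$ and recover the mixed derivative $\partial^n_{A_1,A,\dots,A}f(\mathrm{Id})$ as $\tfrac1n D_{A_1}$ applied to the $n$-th diagonal coefficient $P_n(A)$ --- is exactly the ``direct calculation'' the paper invokes without writing out, and your polarization identity is correct; your identification of the order-$3$ and order-$4$ terms of $\beta$ as the delicate part is also apt. As a strategy there is nothing to object to.

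The step that fails is the last one: ``checking that the coefficients organize precisely along the displayed weight vectors'' cannot succeed for the statement as printed, so you must not end the proof by asserting that it does. Running your own recipe on $\gamma=\det$ at second order gives $\tfrac12 D_{A_1}\big((\mathrm{tr}\,A)^2-\mathrm{tr}(A^2)\big)=(\mathrm{tr}\,A)(\mathrm{tr}\,A_1)-\mathrm{tr}(AA_1)$, whereas the displayed third component $(A:A_1)-2(\mathrm{sym}\,A:A_1)$ equals $-\mathrm{tr}(AA_1)$ alone: the term $(\mathrm{Id}:A)(\mathrm{Id}:A_1)$ is missing. (Test $A_1=A=\mathrm{diag}(1,0,0)$: $\det(\mathrm{Id}+tA)=1+t$ has vanishing second derivative, while the printed formula returns $-1$.) Likewise, at third order the $\beta$-component produced by expanding $\mathrm{tr}(M^2)$ and polarizing is $8(AA^T+A^TA+A^2):A_1$, which coincides with the printed value $8(AA^T+2A^2):A_1$ only when $A$ is symmetric (test $A=E_{12}$, for which $A^2=0$ but $A^TA=E_{22}\neq 0$; the two expressions then give $8(A_1)_{11}+8(A_1)_{22}$ versus $8(A_1)_{11}$). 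So a faithful execution of your plan must either record these corrected expressions, or restrict the higher-order rows to symmetric $A$ and amend the second-order row by the extra $(\mathrm{Id}:A)(\mathrm{Id}:A_1)(0,0,1)$. Note also that this is not a purely cosmetic point: the omitted term in $\partial^2_{A_1,A}\gamma$ enters the chain rule in Theorem \ref{Wform} and shifts the stated Lam\'e constant $\lambda$ by $\partial_3\sigma(3,3,1)$.
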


For $F$ as in (\ref{Fplanar}), we have:
$$\alpha(F) = 2+|a|^2,\quad \beta(F) = 2+ |a|^4 + 2(|a|^2 - a_3^2), 
\quad \gamma(F) = a_3.$$
Hence, and without loss of generality, the reduced function $W(a)$
must be of the form $W(a) = \tilde\sigma(|a|^2, a_3)$, for some
$\tilde\sigma:\mathbb{R}^2\longrightarrow \overline{\mathbb{R}}_+$.

In the incompressible shear case, it reduces to:
\begin{equation}\label{ex23}
\check W(a)=\check \sigma(|a|^2)=\tilde \sigma(|a|^2,1),
\end{equation}
leading to a profile equation agreeing with that of 
the $2\times 2$ rotationally symmetric model:
\begin{equation}\label{A3}
a_{t} +( 2\nabla\check \sigma (|a|^2) a)_z= a_{zz}.
\end{equation}
This clarifies and puts in a more familiar context the
investigations of Antman and Malek-Madani \cite{AM} 
on existence of viscous profiles for the 2D incompressible shear model.

\subsection{General viscous stress tensor}\label{s:genvisc}
We do not have a complete categorization of possible $\mathcal{Z}$
satisfying (i)--(iii) corresponding to that of Theorem \ref{Wform} for
the elastic energy density $W$.
However, we note the related discussion of Antman \cite{A} for
the class of systems of {\it strain-rate type}:
$$ \mathcal Z(F,Q)= FS(C,D),$$
where $S$ is a symmetric dissipation tensor depending on the the metric
$C=F^TF$ and on its time derivative $D = F^TQ+ Q^T F=2\mbox{sym}(F^TQ)$.
These automatically satisfy (i) and (ii) because
$\mbox{sym}\big( (RF)^T(RK F + RQ)\big) =
\mbox{sym}( F^TQ + F^TKF) =\mbox{sym}(F^TQ)$. Thus only (iii) need be
checked, in the form:
\be\label{betteriii}
S(C,D): D\geq 0.
\ee
Both of the examples \eqref{Zs} are of this type.
For $\mathcal{Z}_1=FS_1$, we have:
$$ S_1(C,D)= \mbox{sym}(F^TQ)=\frac{1}{2}D,$$
which evidently satisfies the strict version of inequality \eqref{betteriii}:
\be\label{strict}
S(C,D): D \ge \gamma |D|^2, \quad \gamma>0.
\ee
For $\mathcal{Z}_2=FS_2$, we have
$S_2=(\mbox{det} F) F^{-1}\mbox{sym}(QF^{-1}) F^{-1, T}=
\frac{1}{2}(\det C)^{1/2}C^{-1}D C^{-1},$ hence:
$$ S_2(C,D):D = \frac{1}{2}(\det C)^{1/2} C^{-1}C_t C^{-1}: D
= \frac{1}{2}(\det C)^{1/2}|C^{-1/2}D C^{-1/2}|^2\geq \gamma |D|^2,$$
where $\gamma$ depends on $C$, but is uniform
for $F$ bounded and $\det F$ bounded away from zero.

In \cite{A}, Antman proposes as a sufficient condition for \eqref{strict},
that $S$ be monotone in $D$ in the sense that
$\frac{\partial S}{\partial D}A:A \ge \gamma |A|^2$,
$\gamma>0$,
for $A$ symmetric.
This implies \eqref{strict} under the additional assumption
$S(\cdot,0)\equiv 0$ (i.e. viscous force vanishes at zero velocity), by:
$$ S(C,D):D - S(C,0):D=\int_0^1
\frac{\partial S}{\partial D}(C,\theta D) D:D ~\mbox{d}\theta \ge \gamma
|D|^2.$$
It is easily checked that monotonicity holds for both of the
choices $\mathcal{Z}_1$, $\mathcal{Z}_2$.


\section{Appendix: Phase-transitional elasticity}\label{s:phase}

Another interesting direction for future investigations is the
phase-transitional case, as we now briefly discuss.
%
%
A typical model, as described in \cite{FP}, has the form:
$$ W(F) = | F^T F -C_- |^2\cdot | F^T F -C_+|^2, $$
where:
$$ C_\pm= F_\pm^T F_\pm = \left[\begin{array}{ccc}
1 & 0 & 0\\
0 & 1 & \pm \eps\\
0 & \pm \eps & 1+\eps^2 \end{array}\right],
\qquad
F_\pm = \left[\begin{array}{ccc}
1 & 0 & 0\\
0 & 1 & \pm \eps\\
0 & 0 & 1
\end{array}\right]. $$
Evidently, $W$ is minimized among planar deformation gradients $F$
at the two equilibria $F_\pm$.

A particularly interesting class of solutions to (\ref{visco_eq}) are
stationary phase-transitional shocks connecting the two equilibria
$(b_\pm, a_\pm)$, $a_\pm = (0,\pm\eps,1)$, that is,
zero-speed shocks compatible with the Rankine-Hugoniot condition
for (\ref{hyppar}) with (\ref{dg}): $b_+=b_-$, $DW(a_+) =DW(a_-)$.
Similarly as in section \ref{s:5} and as in 1d case treated by Slemrod
\cite{Sl},   such connections do not exist under
the viscoelastic effects alone, but their existence requires also the inclusion
of third-order surface energy terms as in Section \ref{s:surface}.

%

Solving the system (\ref{5.0}) augmented by the term
$\gamma \mbox{div } \mathcal{E}$ coming from the surface energy
$\mathcal{E}_0$ as in (\ref{exentropy}),
we obtain (for $s=0$) that $b= const$ and
$DW(a)_z= \gamma a_{zzz}$.  Take the capillarity coefficient
$\gamma\neq 0$ and assume that $DW(a_\pm)=0$ (the end-states
in the potential well).
Integrating, we obtain a harmonic oscillator equation:
$DW(a)= \gamma a_{zz}$, with Hamiltonian:
$$ H(a,a_z) = W(a) - \frac{\gamma}{2}|a_z|^2 = const. $$
Hence, a question is whether the connected component of the
level set of $H$, containing $(a_-,0)$, contains also $(a_+,0)$.
Contrary to the 1d case in \cite{Sl}, 
this is only a necessary condition for profile's existence.
Existence or nonexistence of such connections would be an interesting
question for further analytical and numerical investigation.

A second question would be to determine stability of such stationary
transitions, should they exist. We conjecture that, similarly as in
\cite{Z8} for the 1d case, the spectral stability 
follows automatically by energy-considerations.
Stability of nonstationary phase-transitional profiles
has not been treated even in the 1d case,
and would be another interesting problem.




Finally, we point out that the equations with surface energy
do not fit the stability theory of Section \ref{nstab},
since they are third- and not second-order.
However, we expect that the basic methods should still apply, after suitable modifications.
It would be very interesting, for the sake of this and other applications
involving dispersive phenomena (for example, the Hall effect in MHD),
to carry out a complete analysis extending the nonlinear stability
framework to this higher-order case.

\section{Appendix: Nonhyperbolic endstates}\label{s:complex}
Consider the 1D compressible equations (\ref{simplebfirstorder1D})
with the associated profile equation, obtained by setting $a_1=a_2=0$ in (\ref{compode}):
\ba\label{2h}
 2a_3'= (a_3^3-a_3 -\sigma a_3)- (\alpha^3-\alpha -\sigma \alpha),
\qquad
\alpha:= a_{3-} = a_3(-\infty).
\ea
As noticed in section \ref{334},
strict hyperbolicity of (\ref{simplebfirstorder1D}) 
corresponds to $|a_3|>1/\sqrt{3}$.
On the other hand, when $\alpha>a_{3+}>0$,
$\sqrt{(1+\sigma)/3}<\alpha<\sqrt{1+\sigma}$ and $\sigma=a_{3+}^2
+a_{3+}\alpha +\alpha^2>0$, the equation (\ref{2h})
posseses a solution $a_3(z)$, decreasing from its unstable equilibrium
$\alpha=a_3(-\infty)$ to the stable one
$a_{3+}=a_3(+\infty)$. This solution corresponds to a viscous
shock profile of the associated scalar 
equation $a_{3,t} + (a_3^3-a_3)_z=a_{3,zz}$ and the 2d viscous
system:
\begin{equation}\label{v2}
a_{3,t} - b_{3,z} = 0, \qquad b_{3,t}-(a_3^2-a_3)_z = b_{3,zz}.
\end{equation}

Thus, there exist shock profiles of \eqref{simplebfirstorder1D} for which the
left end-state is hyperbolic, with one characteristic greater
than the shock speed $s$ and the other smaller than $s$, but the right end-state is
not hyperbolic as it has
two pure imaginary characteristics with real parts $c_+<s$.
This is a ``complex Lax shock'' of the type considered in \cite{AMPZ,OZ}.

Writing the system (\ref{v2}) in the operator form $(a_3, b_3)^T_t =
Q\cdot (a_3, b_3)^T$ where:
$$Q=\left[\begin{array}{cc}0
    &\partial_z\\(3a_3^2-1)\partial_z&\partial_{zz}
\end{array}\right],$$
we determine the spectrum of the constant solution $(a_+, b_+)$ through
the dispersion relation:
$$ 0=\det\left[\begin{array}{cc} \lambda & -ik\\
-(3a_{3+}^2-1)ik & \lambda+k^2\end{array}\right] =
\lambda^2 + k^2\lambda +(3a_{3+}^2-1)k^2. $$
Consequently:
$$ \lambda(k)\sim  -\frac{1}{2}k^2 \pm k\sqrt{1-3a_{3+}^2} 
\qquad \mbox{ for } ~k\sim 0,$$
yielding a maximal growth rate $e^{rt}$
with $r=Re~ \lambda_{\rm max}(k) \sim (1-3a_{3+}^2)/2$.
As discussed in \cite{AMPZ,OZ}, for speed
$|s|$ sufficiently large compared to the growth rate $r$,
the shock profile can be seen to be
stable, despite instability of its right end-state
as a constant solution.
It also can be shown, by weighted coordinate
techniques as in \cite{Sat,LRTZ}, that
Evans stability together with such a convection vs. growth condition on the
essential spectrum, implies linearized and nonlinear stability
for perturbations that are exponentially localized on the half-line
$z>0$.
Another direction for further investigation might be to
understand whether there are interesting physical
phenomena corresponding to shocks of this type.

In contrast with the situations considered in \cite{AMPZ,OZ},
for which profiles associated with complex Lax shocks are
oscillatory at the complex end,
the profiles here are of ordinary monotone type.
Other complex Lax connections, genuinely two-dimensional,  may
be seen in Fig.  \ref{compressible_phase_portrait_1}(b).
Provided that all characteristics are incoming on the
complex side $z\to +\infty$, nonlinear stability may again be established
by weighted norm methods assuming Evans stability plus an
appropriate convection vs. growth condition as described in \cite{AMPZ,OZ}.

\section{Appendix: Nonexistence of undercompressive profiles} 
\label{nonexist}

We show that undercompressive profiles do not occur
for general shear models as in (\ref{ex23}).
The same argument implies that undercompressive profiles cannot
occur also in the 2D compressible case, for the special class of
potentials $W(a)$ depending only on $|a|^2$.

Note first that every undercompressive profile $(a(z), b(z))$ of
(\ref{shearfirstorder}), with speed $s$, induces an
undercompressive profile of (\ref{shearfirstorderprof}) given by
$z\mapsto a(sz)$ with speed $\sigma=s^2$. 
This statement follows from a more general
observation in \cite{MaZ3, BLZ} relating the type of inviscid shocks
to the connection number for the traveling wave in the reduced ODE (\ref{redprof}).
Alternatively, the same can be checked directly:
if the sum of the number of eigenvalues $\lambda$ for  (\ref{shearfirstorder}) at
$a(-\infty)$ with $\lambda>s$, and the number of eigenvalues $\mu$ at
$a(+\infty)$ with $\mu<s$, is less than $5$ (the dimension of
the system increased by $1$), then the sum of the number of
eigenvalues of (\ref{shearfirstorderprof})
$\lambda^2>\sigma$ and the number of eigenvalues 
$\mu^2<\sigma$ is less than $3$,
when $s>0$.  When  $s<0$ we likewise have that the number of
$\mu^2<\sigma$ plus the number of $\mu^2>\sigma$, is less than $3$.

We shall prove that the system:
\ba\label{shearfirstorderprof1}
a_{t} +( h(|a|^2)a)_z= a_{zz},
\ea
generalizing \eqref{shearfirstorderprof} and (\ref{A3}), where $h=2\nabla\check
\sigma$, admits no undercompressive shocks.

After integrating in $z$, the traveling wave ODE for
(\ref{shearfirstorderprof1}) reads:
\begin{equation}\label{to}
a'=-\sigma a + h(|a|^2)a - \big(-\sigma a_-+ h(|a_-|^2)a_-\big),
\end{equation}
where $\sigma$ is the speed of the shock and $a_-=(a_{1-}, a_{2-})= a(-\infty)$
is its left end-state. Note that all possible right end-states $a_+=(a_{1+},
a_{2+})$ which can be connected to $a_-$, satisfying hence 
the Rankine-Hugoniot condition:
\begin{equation}\label{rhm}
h(|a_-|^2) a_- - \sigma a_- = h(|a_+|^2) a_+ - \sigma a_+,
\end{equation}
must lie on the same line through the origin, due to rotational
invariance of the system (\ref{shearfirstorderprof1}). We may, without
loss of generality, assume it to be the $a_1$ axis, so that:
$a_{2-}=a_{2+}=0$.

The gradient of the right hand side in (\ref{to}):
\begin{equation*}
D\big(h(|a|^2)a-\sigma a\big) =
\big(h(|a|^2)- \sigma\big) \Id + 2\nabla h(|a|^2)a \otimes a
\end{equation*}
has two eigenvalues: $\lambda_1(a) = h(|a|^2) - \sigma + 2\nabla h(|a|^2)|a|^2$,
with the radial direction eigenvector
$a$, and $\lambda_2(a) = h(|a|^2) - \sigma$ with the eigenvector
$a^\perp$ in the transverse (rotational) direction.

Observe further that an undercompressive shock profile must
necessarily be a saddle to saddle connection, and that any saddle point
has one of its invariant manifolds (the one corresponding to
$\lambda_1$) confined to the $a_1$ axis.
We find that the profile must either lie entirely 
on the $a_1$ axis, or else entirely off the axis, leaving
$a_-$ and entering $a_+$ along the transverse direction (orthogonal to
the axis). We now distinguish $3$ cases:

\smallskip

{\bf Case (i) $\mathbf{a_{1+}} \mathbf{a_{1-} >0}$.}
In this situation, in view of (\ref{rhm}), the  transverse eigenvalues $\lambda_2(a_-) =
h(a_{1-}^2) - \sigma$ and $\lambda_2(a_+) = h(a_{1+}^2) - \sigma$ must also have
a common sign. Thus the profiles may only leave or only enter along
the transverse directions, contradicting the assumed behavior.

\smallskip

{\bf Case (ii) $\mathbf{a_{1+}} \mathbf{a_{1-}< 0}$, 
and the profile connection is radial.}
Without loss of generality, assume that $a_{1-}>0$ and $a_{1+}>0$ so
that $a_1'(z)<0$ along the whole profile. In particular, $a_1'(z_0)<0$ at $z_0$
where $a_1(z_0) = 0$.
By (\ref{to}) it follows that:
$$h(a_{1-}^2) a_{1-} - \sigma a_{1-} = -a_1'(z_0)>0,$$
hence $h(a_{1-}^2) - \sigma >0$. This means that the transverse eigenvalue
corresponds to the unstable direction, contradicting the profile being
radial and $a_-$ being a saddle equilibrium point.

\smallskip

{\bf Case (iii) $\mathbf{a_{1+}}\mathbf{a_{1-}<0}$, 
and the profile connection is transverse.}
In this situation $\lambda_2(a_-)>0$ and $\lambda_2(a_+)<0$.
Observe that on the circle $|a|^2=a_{1-}^2$, we have:
$$ a'=\big(h(a_{1-}^2) -\sigma \big) (a-a_-) = \lambda_2(a_-) (a-a_-),$$
and thus the exterior of this circle is a positively invariant set.
Likewise, by (\ref{to}) and (\ref{rhm}), on the circle 
$|a|^2=a_{1+}^2$ there holds:
$$ a'=\big(h(a_{1+}^2) -\sigma \big) (a-a_+) = \lambda_2(a_+) (a-a_+),$$
and hence the exterior of this 
circle is a negatively invariant set.
Consequently, at that of the two saddle points $a_-$ and $a_+$
which has larger norm,  the invariant curve tangent to the
transverse direction must lie, for all times, outside the corresponding
circle.  This is clearly a contradiction and ends the proof, as the
case $a_{1+} = -a_{1-}$ is ruled out by comparing the right hand sides
of the above ODEs.


\begin{thebibliography}{GMWZ2}

{\footnotesize

\bibitem [AGJ] {AGJ} J. Alexander, R. Gardner and C.K.R.T. Jones,
{\it A topological invariant arising in the analysis of
traveling waves}, J. Reine Angew. Math. 410 (1990) 167--212.

\bibitem[AS]{AS} J.~C. Alexander and R.~Sachs.
{\it Linear instability of solitary waves of a {B}oussinesq-type equation:
  a computer assisted computation,}
Nonlinear World, 2(4):471--507, 1995.

\bibitem [AMPZ]{AMPZ} A. Azevedo, D. Marchesin, B. Plohr and K. Zumbrun,
{\it Long-lasting diffusive solutions for systems of conservation laws,}
VI Workshop on Partial Differential Equations, Part I (Rio de Janeiro, 1999). 
Mat. Contemp. 18 (2000), 1--29. 

\bibitem [A] {A} S. Antmann,
{\it Real artificial viscosity for the equations of
nonlinear elasticity,} draft (2010).

\bibitem [AM] {AM} S. Antmann and R. Malek-Madani,
{\it Travelling waves in nonlinearly viscoelastic media and shock structure
in elastic media,}  Quart. Appl. Math.  46  (1988) 77--93.

\bibitem[Ba]{Ball} J.M. Ball, \textit{Some open problems in elasticity.  
Geometry, mechanics, and dynamics}, Springer, New York (2002), 3--59.

\bibitem[BHZ]{BHZ} B. Barker, J. Humpherys, and K. Zumbrun,
{\it One-dimensional stability of parallel shock
layers in isentropic magnetohydrodynamics,}
preprint (2009).

\bibitem[BHRZ]{BHRZ} B. Barker, J. Humpherys, , K. Rudd, and K. Zumbrun,
{\it Stability of viscous shocks in isentropic gas dynamics},
Comm. Math. Phys.  281  (2008),  no. 1, 231--249.

\bibitem[BDG]{BDG}
T.J. Bridges, G. Derks, and G. Gottwald,
\textit{Stability and instability of solitary waves of the
fifth-order KdV equation: a numerical framework.}
Phys. D  172  (2002),  no. 1-4, 190--216.

\bibitem[BLZ]{BLZ} B. Barker, O. Lafitte, and K. Zumbrun,
{\it Stability of 2d viscous isentropic MHD shocks with infinite
electrical resistivity}, preprint (2009).

\bibitem[Be1]{Be1} W.-J. Beyn,
{\it The numerical computation of connecting orbits in dynamical systems,}
IMA J. Numer. Analysis 9: 379--405 (1990).

\bibitem[Be2]{Be2} W.-J. Beyn,
{\it Zur stabilit\:at von differenenverfahren f\"ur systeme linearer
gew\"ohnlicher randwertaufgaben,}
Numer. Math. 29: 209--226 (1978).


\bibitem[Br]{Br} L. Q. Brin,
{\it Numerical testing of the stability of viscous shock waves.}
Math. Comp. 70 (2001) 235, 1071--1088.

\bibitem[BrZ]{BrZ} L. Brin and K. Zumbrun,
{\it Analytically varying eigenvectors and the stability of viscous
shock waves}. Seventh Workshop on Partial Differential Equations, Part I (Rio
de Janeiro, 2001).
Mat. Contemp. 22 (2002), 19--32.

\bibitem[CGS]{CGS} J. Carr, M. Gurtin, and M. Slemrod, 
{\it Structured phase transitions on a finite interval}, 
Arch. Rational Mech. Anal. (86) (1984), 317-351.

\bibitem[CS1]{CS1}
C.~C. Conley and J.Smoller, 
{\it On the structure of magnetohydrodynamic shock waves,}
Comm. Pure Appl. Math 27, 367--375, 1974.

\bibitem[CS2]{CS2} C.~C. Conley and J.Smoller, 
{\it On the structure of magnetohydrodynamic shock waves. II,}
J. Math. Pures Appl. (9) no. 4, 429--443, 1975.

\bibitem[CHNZ]{CHNZ} N. Costanzino, J. Humpherys, T. Nguyen, and K. Zumbrun,
{\it Spectral stability of noncharacteristic boundary layers
of isentropic Navier--Stokes equations},
to appear, Arch. Rat. Mechanics and Anal.

\bibitem[D]{dafermos_book}  C. Dafermos, \textit{Hyperbolic Conservation Laws in Continuum Physics},  
Springer-Verlag 1999.

\bibitem[EF]{EF} J.~W. Evans and J.~A. Feroe.
{\it Traveling waves of infinitely many pulses in nerve equations,}
Math. Biosci., 37:23--50 (1977).

\bibitem[F]{F} H. Freist{\"u}hler,
{\it Dynamical stability and vanishing viscosity: a case
study of a non-strictly hyperbolic system,}
Comm. Pure Appl. Math. 45 (1992).

\bibitem[FP]{FP} H. Freist{\"u}hler and R. Plaza, 
{\it Normal modes and nonlinear stability behaviour of dynamic phase 
boundaries in elastic materials,}
Arch. Ration. Mech. Anal.  186  (2007),  no. 1, 1--24. 

\bibitem[FS]{FS} H. Freist{\"u}hler and P. Szmolyan,
{\it Existence and bifurcation of viscous profiles 
for all intermediate magnetohydrodynamic shock waves,}
SIAM J. Math. Anal. 26 no. 1 (1995) 112--128.


\bibitem[GJ1]{GJ1} R. Gardner and C.K.R.T. Jones,
{\it A stability index for steady state solutions of
boundary value problems for parabolic systems,}
J. Diff. Eqs. 91, no. 2, 181--203, 1991.

\bibitem[GJ2]{GJ2} R. Gardner and C.K.R.T. Jones,
{\it Traveling waves of a perturbed diffusion equation
arising in a phase field model,}
Ind. Univ. Math. J. 38, no. 4, 1197--1222, 1989.

\bibitem[GZ]{GZ}  R. Gardner and K. Zumbrun,
{\it The Gap Lemma and geometric criteria for instability
of viscous shock profiles}.
Comm. Pure Appl.  Math. 51 (1998), no. 7, 797--855.

\bibitem[G]{G} P. Germain,
{\it Contribution \`a la th\'eorie des ondes de choc en 
magn\'etodynamique des fluides,}
ONERA Publ. No. 97, 
Office Nat. ?tudes et Recherche A?rospatiales, Ch\^atillon, 1959.


\bibitem[HM]{HM} N.~Hale and D.~R. Moore.
{\it A sixth-order extension to the matlab package bvp4c of j. kierzenka
  and l. shampine,}
Technical Report NA-08/04, Oxford University Computing Laboratory,
  May 2008.

\bibitem[HZ]{HZ} P. Howard and K. Zumbrun,
\emph{Stability of undercompressive viscous shock waves},
J. Differential Equations  225  (2006),  no. 1, 308--360;
preprint 2004.

\bibitem[HR]{HR} P. Howard and M. Raoofi,
{\it Pointwise asymptotic behavior of perturbed viscous shock profiles,}
Adv. Differential Equations (2006) 1031--1080.

\bibitem[HRZ]{HRZ} P. Howard, M. Raoofi, and K. Zumbrun,
{\it Sharp pointwise bounds for perturbed viscous shock waves,}
J. Hyperbolic Differ. Equ. (2006) 297--373; preprint 2005.

\bibitem[HLZ]{HLZ} J. Humpherys, O. Lafitte, and K. Zumbrun,
{\it Stability of viscous shock profiles in the high Mach number
limit,} to appear, Comm. Math. Phys.; published online, Sept. 2009.

\bibitem[HLyZ]{HLyZ}
J. Humpherys, G. Lyng, and K. Zumbrun,
{\it Spectral stability of ideal gas shock layers,}
To appear, Arch. for Rat. Mech. Anal.

\bibitem[HuZ]{HuZ} J. Humpherys and K. Zumbrun,
{\it Spectral stability of small amplitude
shock profiles for dissipative symmetric hyperbolic--parabolic systems.}
Z. Angew. Math. Phys. 53 (2002) 20--34.

\bibitem[HuZ2]{HuZ2} J. Humpherys and K. Zumbrun,
{\it An efficient shooting algorithm for Evans function calculations
in large systems,} Phys. D  220  (2006),  no. 2, 116--126.

\bibitem[K]{K} T. Kato, {\it Perturbation theory for linear
    operators}, Springer-Verlag, Berlin Heidelberg (1885).

\bibitem[Kaw]{Kaw} S.~Kawashima,
{\it Systems of a hyperbolic--parabolic composite type, with
  applications to the equations of magnetohydrodynamics},
PhD thesis, Kyoto University (1983).

\bibitem[KL]{KL} J.~Kierzenka and L.~F. Shampine.
{\it A {BVP} solver that controls residual and error,}
JNAIAM J. Numer. Anal. Ind. Appl. Math., 3(1-2):27--41, 2008.


\bibitem[LZu]{LZu} T.P. Liu and K. Zumbrun,
{\it On nonlinear stability of general undercompressive viscous shock waves},
Commun. Math. Phys. 174  (1995)  319--345.

\bibitem[LRTZ]{LRTZ} G. Lyng, M. Raoofi, B. Texier, and K. Zumbrun,
{\it Pointwise Green function bounds and stability of combustion
waves,} J. Differential Equations  233  (2007),  no. 2, 654--698. 

\bibitem[MaZ2]{MaZ2} C. Mascia and K. Zumbrun,
{\it Stability of small-amplitude shock profiles of symmetric
hyperbolic-parabolic systems,}
Comm. Pure Appl. Math.  57  (2004),  no. 7, 841--876.

\bibitem[MaZ3]{MaZ3} C. Mascia and K. Zumbrun,
{\it Pointwise Green function bounds for shock profiles of
systems with real viscosity.}
Arch. Ration. Mech. Anal.  169  (2003),  no. 3, 177--263;

\bibitem[MaZ4]{MaZ4} C. Mascia and K. Zumbrun,
\emph{Stability of large-amplitude viscous shock profiles
of hyperbolic-parabolic systems,}
Arch. Ration. Mech. Anal.  172  (2004),  no. 1, 93--131;


\bibitem [OZ]{OZ}
M. Oh and K. Zumbrun, \textit{Stability of periodic
solutions of viscous conservation laws with viscosity-
1. Analysis of the Evans function},
Arch. Ration. Mech. Anal. 166 (2003), no. 2, 99--166.


\bibitem[PSW]{PSW} R.~L. Pego, P.~Smereka, and M.~I. Weinstein.
{\it Oscillatory instability of traveling waves for a {K}d{V}-{B}urgers
  equation,} Phys. D, 67(1-3):45--65, 1993.

\bibitem[R]{R} M. Raoofi,
\emph{$L^p$ asymptotic behavior of
perturbed viscous shock profiles,}
J. Hyperbolic Differ. Equ. 2 (2005), no. 3, 595--644;
preprint 2004.

\bibitem[RZ]{RZ} M. Raoofi and K. Zumbrun,
{\it Stability of undercompressive viscous shock profiles of
  hyperbolic-parabolic systems,}
J. Differential Equations, (2009) 1539--1567.

\bibitem[Sat]{Sat} D. Sattinger,
{\it On the stability of waves of nonlinear parabolic systems}.
Adv. Math. 22 (1976) 312--355.

\bibitem[SGT]{SGT} L.~F. Shampine, I.~Gladwell, and S.~Thompson.
{\em Solving {ODE}s with {MATLAB}}.
Cambridge University Press, Cambridge (2003).

\bibitem[Sl]{Sl} M. Slemrod,
{\it Dynamics of first-order phase transitions,} in:
Phase transitions and material instabilities in solids,
ed. M.E. Gurtin, Academic Press (1984) 163--203.

\bibitem[SZ]{SZ} P. Sternberg and K. Zumbrun,
{\it Connectivity of Phase Boundaries in Strictly Convex Domains},
Arch. Rational:Mech. Anal. 141 (1998), no. 4, 375--400.

\bibitem[TN]{TN} C. Truesdell and W. Noll,
{\it The non-linear field theories of mechanics,}
Third edition. Edited and with a preface by Stuart S. Antman.
Springer-Verlag, Berlin, 2004. xxx+602 pp. ISBN: 3-540-02779-3.


\bibitem[Z2]{Z2} K.~Zumbrun.
{\it A local greedy algorithm and higher order extensions for global
  numerical continuation of analytically varying subspaces},
to appear, Quart. Appl. Math.

\bibitem[Z3]{Z3} K.~Zumbrun.
{\it Numerical error analysis for evans function computations: a numerical
  gap lemma, centered-coordinate methods, and the unreasonable effectiveness of
  continuous orthogonalization,} preprint, 2009.

\bibitem[Z4]{Z4} K. Zumbrun, {\it Stability of large-amplitude shock
waves of compressible Navier--Stokes equations,}
with an appendix by Helge Kristian Jenssen and Gregory Lyng,
in Handbook of mathematical fluid dynamics. Vol. III,  311--533,
North-Holland, Amsterdam, (2004).

\bibitem[Z8]{Z8} K. Zumbrun,
{\it Dynamical stability of phase transitions in the $p$-system with
viscosity-capillarity}, SIAM J. Appl. Math. 60 (2000), no. 6,
1913--1924 (electronic).

\bibitem[ZH]{ZH} K. Zumbrun and P. Howard,
\textit{Pointwise semigroup methods and stability of viscous shock waves}.
Indiana Mathematics Journal V47 (1998), 741--871;
Errata, Indiana Univ. Math. J.  51  (2002),  no. 4, 1017--1021.
}

\end{thebibliography}
\end{document}